\newtheorem{theorem}{Theorem}[section]
\newtheorem{lemma}[theorem]{Lemma}
\newtheorem{proposition}[theorem]{Proposition}
\newtheorem{corollary}[theorem]{Corollary}
\newtheorem{remar}[theorem]{Remark}
\newtheorem{prob}{Open Problem}[section]
\theoremstyle{definition}
\newcommand{\QED}{{\unskip\nobreak\hfil\penalty50%
\hskip1em\hbox{}\nobreak\hfil $\Box$%
\parfillskip=0pt \finalhyphendemerits=0 \par\medskip\noindent}}
\newcommand{\bfind}[1]{\index{#1}{\bf #1}}
\newcommand{\n}{\par\noindent}
\newcommand{\sn}{\par\smallskip\noindent}
\newcommand{\bn}{\par\bigskip\noindent}
\newcommand{\pars}{\par\smallskip}
\newcommand{\parm}{\par\medskip}
\newcommand{\parb}{\par\bigskip}
\newcommand{\cal}{\mathcal}
\newcommand{\chara}{\mbox{\rm char}\,}
\newcommand{\trdeg}{\mbox{\rm trdeg}\,}
\newcommand{\Gal}{\mbox{\rm Gal}\,}
\newcommand{\rr}{\mbox{\rm rr}\,}
\newcommand{\Q}{\mathbb Q}
\newcommand{\N}{\mathbb N}
\newcommand{\Z}{\mathbb Z}
\newcommand{\F}{\mathbb F}
\newcommand{\fvkadresse}{\par\bigskip \small\rm
 Department of Mathematics and Statistics, 
 University of Saskatchewan, \par
 106 Wiggins Road, 
 Saskatoon, Saskatchewan, Canada S7N 5E6 \par
 email: fvk@math.usask.ca}
 \newcommand{\abaddress}{\par\bigskip \small\rm
Institute of Mathematics,
University of Silesia, \par
Bankowa 14,
40-007 Katowice,
Poland \par
e-mail: ablaszczok@math.us.edu.pl}
\begin{document}
\title[Algebraic independence]{Algebraic independence of elements in
immediate extensions of valued fields}
\author{Anna Blaszczok and Franz-Viktor Kuhlmann}
\date{4.\ 4.\ 2013}
\begin{abstract}\noindent
Refining a constructive combinatorial method due to MacLane and
Schilling, we give several criteria for a valued field that guarantee
that all of its maximal immediate extensions have infinite transcendence
degree. If the value group of the field has countable cofinality, then
these criteria give the same information for the completions of the
field. The criteria have applications to the classification
of valuations on rational function fields. We also apply the criteria to
the question which extensions of a maximal valued field, algebraic or of
finite transcendence degree, are again maximal. In the case of valued
fields of infinite $p$-degree, we obtain the worst possible examples of
nonuniqueness of maximal immediate extensions: fields which admit an
algebraic maximal immediate extension as well as one of infinite
transcendence degree.
\end{abstract}
\thanks{During the work on this paper, the first author was partially
supported by a Polish UPGOW Project grant. The research of the second
author was partially supported by a Canadian NSERC grant and a
sabbatical grant from the University of Saskatchewan. He gratefully
acknowledges the hospitality of the Institute of Mathematics at the
Silesian University in Katowice.}
\subjclass[2010]{12J10, 12J25}
\maketitle
%
%
%
%
\section{Introduction}
In this paper, we denote a valued field by $(K,v)$, its value group by
$vK$, and its residue field by $Kv$. When we talk of a valued field
extension $(L|K,v)$ we mean that $(L,v)$ is a valued field, $L|K$ a
field extension, and $K$ is endowed with the restriction of $v$.
For the basic facts about valued fields, we refer the reader to
\cite{[E],[EP],[K5],[R],[W2],[ZS]}.

One of the basic problems in valuation theory is the description of the
possible extensions of a valuation from a valued field $(K,v)$ to a
given extension field $L$. The case of an algebraic extension $L|K$ is
taken care of by ramification theory.

Another important case is given when $L|K$ is an algebraic function
field. Valuations on algebraic function fields appear naturally in
algebraic geometry, real algebraic geometry and the model theory of
valued fields, to name only a few areas. Local uniformization, the local
form of resolution of singularities, is essentially a property of valued
algebraic function fields (cf.\ \cite{[K3]}). This problem, which is
still open in positive characteristic, requires a detailed knowledge of
all possible valuations on such function fields. The same is true for
corresponding problems in the model theory of valued fields.

By means of ramification theory, the problem of describing all appearing
valuations is reduced to the case of rational function fields. The case
of a single variable attracted many authors; see the references in
\cite{[K4]} for a selection from the extensive literature on this case.
The case of higher transcendence degree was treated in \cite{[K4]}. What
at first glance appeared to be problem easily solvable by induction,
turned out to tightly connected with the intricate question whether the
maximal immediate extensions of a given valued field have finite or
infinite transcendence degree.

An extension $(L|K,v)$ of valued fields is called \bfind{immediate} if
the canonical embeddings $vK\hookrightarrow vL$ and $Kv\hookrightarrow
Lv$ are onto. It was shown by W.~Krull \cite{[KR]} that maximal
immediate extensions exist for every valued field. The proof uses Zorn's
Lemma in combination with an upper bound for the cardinality of valued
fields with prescribed value group and residue field. Krull's deduction
of this upper bound is hard to read; later, K.~A.~H.~Gravett
\cite{[GRA]} gave a nice and simple proof.

A valued field $(K,v)$ is called \bfind{henselian} if it satisfies
Hensel's Lemma, or equivalently, if the extension of its valuation $v$
to its algebraic closure, which we will denote by $\tilde{K}$, is
unique. A \bfind{henselization} of $(K,v)$ is an extension which is
henselian and minimal in the sense that it can be embedded over $K$, as
a valued field, in every other henselian extension field of $(K,v)$.
Therefore, a henselization of $(K,v)$ can be found in every henselian
extension field, and henselizations are unique up to valuation
preserving isomorphism over $K$ (this is why we will speak of {\it the}
henselization of $(K,v)$. The henselization is an immediate
separable-algebraic extension.

A valued field is called \bfind{maximal} if it does not admit any proper
immediate extension; clearly, maximal immediate extensions are maximal
fields. I.~Kaplansky (\cite{[Ka]}) characterized the maximal field as
those in which every pseudo Cauchy sequence admits a pseudo limit. From
this result it follows that power series fields are maximal fields. For
example, for any field $k$ the Laurent series field $k((t))$ with the
$t$-adic valuation is a maximal immediate extension of $k(t)$, and it is
well known that $k((t))$ is of infinite transcendence degree over $k(t)$.
This can be shown by a cardinality argument (and some facts about field
extensions in case $k$ is not countable). A constructive proof
was given by MacLane and Schilling in Section 3 of \cite{[MS]}. Our main
theorem is a far-reaching generalization of their result. A part of this
theorem has already been applied in \cite{[K4]} to the problem described
above.


\begin{theorem}                             \label{MTai}
Take a valued field extension $(L|K,v)$ of finite transcendence
degree $\geq 0$, with $v$ nontrivial on $L$. Assume that one of the
following four cases holds:
\sn
\underline{valuation-transcendental case}: \
$vL/vK$ is not a torsion group, or $Lv|Kv$ is transcendental;
\sn
\underline{value-algebraic case}: \
$vL/vK$ contains elements of arbitrarily high order, or there is a
subgroup $\Gamma\subseteq vL$ containing $vK$ such that $\Gamma/vK$ is
an infinite torsion group and the order of each of its elements is prime
to the characteristic exponent of $Kv$;
\sn
\underline{residue-algebraic case}: \
$Lv$ contains elements of arbitrarily high degree over $Kv$;
\sn
\underline{separable-algebraic case}: \
$L|K$ contains a separable-algebraic subextension $L_0|K$
such that within some henselization of $L$, the corresponding
extension $L_0^h|K^h$ is infinite.
\sn
Then each maximal immediate extension of $(L,v)$ has infinite
transcendence degree over $L$. If the cofinality of $vL$ is countable
(which for instance is the case if $vL$ contains an element $\gamma$
such that $\gamma>vK$), then already the completion of $(L,v)$ has
infinite transcendence degree over $L$.
\end{theorem}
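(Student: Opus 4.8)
The plan is to fix an arbitrary maximal immediate extension $(L^m,v)$ of $(L,v)$ --- and, when $\mathrm{cf}(vL)$ is countable, the completion $(L^c,v)$, which is the closure of $L$ in the spherically complete field $L^m$ --- and to construct inside it an infinite sequence $z_1,z_2,\ldots$ of elements algebraically independent over $L$, one at a time by a self-similar induction. Two facts drive the induction. First, Kaplansky's theory of pseudo Cauchy sequences: if a pseudo Cauchy sequence in a valued field $(F,v)$ is of transcendental type (that is, $v(f(a_\nu))$ is eventually constant for every $f\in F[X]$; such a sequence has no pseudo limit in $F$), then any pseudo limit $z$ is transcendental over $F$ and $(F(z)|F,v)$ is \emph{immediate} --- and a pseudo limit always exists in the spherically complete $L^m$, and already in the completion if the sequence is Cauchy in the valuation topology. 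Second, each of the four hypotheses is stable under immediate extensions: if $L_n:=L(z_1,\ldots,z_n)$ is immediate over $L$, then $vL_n=vL$, $L_nv=Lv$, and any separable-algebraic subextension $L_0\subseteq L$ still lies in $L_n$, so $(L_n|K,v)$ again satisfies (at least) the same one of the four cases, while $\trdeg(L_n|K)=\trdeg(L|K)+n$ stays finite and $\mathrm{cf}(vL_n)=\mathrm{cf}(vL)$; moreover $L^m$, being maximal and immediate over $L_n$, remains a maximal immediate extension of $L_n$, and $L^c$ remains its completion because $L_n\supseteq L$ is dense there.

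Thus the theorem reduces to one key lemma: \emph{if $(F|K,v)$ satisfies one of the four hypotheses, $v$ is nontrivial on $F$, and $\trdeg(F|K)<\infty$, then $(F,v)$ carries a pseudo Cauchy sequence of transcendental type with no pseudo limit in $F$, which moreover can be taken Cauchy in the valuation topology whenever $\mathrm{cf}(vF)$ is countable.} Applying this with $F=L_n$ at each stage produces $z_{n+1}$ transcendental over $L_n$ with $L_{n+1}|L_n$ immediate, the induction continues, $\{z_n\}$ is algebraically independent over $L$, and in the Cauchy case each $z_n$ lies in $L^c$ --- which gives both assertions. (The parenthetical sufficient condition $\gamma\in vL$ with $\gamma>vK$ does yield countable cofinality: the coarsening of $v$ with value group $vL$ modulo the convex hull of $vK$ is nontrivial on $L$, trivial on $K$, so by Abhyankar's inequality its value group has finite rational rank and hence countable cofinality, which equals $\mathrm{cf}(vL)$.) To see that the limit $z$ of such a sequence is transcendental over $F$, I would fix a transcendence basis $x_1,\ldots,x_r$ of $F|K$ and note that, since $F$ is algebraic over the rational function field $K(x_1,\ldots,x_r)$, $z$ is transcendental over $F$ iff it is transcendental over $K(x_1,\ldots,x_r)$; this is exactly where $\trdeg(F|K)<\infty$ enters essentially, since it bounds the complexity --- the degrees in $x_1,\ldots,x_r$ --- of the polynomials that must be defeated.

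The construction of the sequence refines MacLane and Schilling's $\sum t^{2^n}$ in $k((t))$: one builds $(a_\nu)$ term by term so that the value increments $v(a_{\nu+1}-a_\nu)$ grow fast enough, relative to a running bound on those degrees, that $v(f(a_\nu))$ is eventually constant for every fixed $f\in K(x_1,\ldots,x_r)[X]$; if $\mathrm{cf}(vF)$ is countable one additionally forces the increments to be cofinal in $vF$, so that $(a_\nu)$ is Cauchy. The four cases supply the successive increments in four different ways: in the valuation-transcendental case from a value- or residue-transcendental element of $F$; in the value-algebraic case from elements of $F$ realizing ever higher orders in $vL/vK$ (prime to the characteristic exponent of $Kv$ in the stated subcase, or taken inside the prescribed group $\Gamma$); in the residue-algebraic case from elements of $F$ with residues of ever higher degree over $Kv$; and in the separable-algebraic case from within the infinite henselization tower $L_0^h|K^h$, using Hensel approximations of roots of its successive defining polynomials to produce a pseudo Cauchy sequence that lies in $L_0$, hence in $F$, and that cannot be of algebraic type precisely because the tower is infinite.

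I expect the main obstacle to be the separable-algebraic construction --- manufacturing \emph{transcendence} out of \emph{algebraic} complexity in the henselization --- together with the uniform bookkeeping common to all four cases, which must turn ``fast enough growth of the value increments'' into ``$v(f(a_\nu))$ eventually constant for all $f$'' while keeping the sequence inside $F$ and, when required, Cauchy. By contrast, the iteration, the stability of the four hypotheses under immediate extensions, and the deduction of the completion statement from the countable-cofinality case are all routine once the key lemma is in hand.
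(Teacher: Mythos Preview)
Your iterative framework is sound and the stability of the four hypotheses under immediate extensions is correctly identified, but the reduction hides the real difficulty rather than dissolving it. The paper does \emph{not} prove your key lemma and then iterate. Instead it constructs \emph{all} of the transcendental elements $y_1,y_2,\ldots$ simultaneously, as pseudo limits of partial sums $\sum_{j\le k} a_{E_i(j)}$ taken from a single master sequence $(a_k)$ with carefully interleaved sparsity patterns $E_i$, and proves via a multivariable estimate (its Lemma~3.1) that $f(y_1,\ldots,y_i)\ne 0$ for every nonzero $f$ with coefficients in a \emph{small} field $K_\infty$ --- essentially $K(t)$ in the valuation-transcendental case, or $K(a_k\mid k\in\N)$ otherwise. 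The conclusion then follows in one line from $\trdeg(L|K_\infty)<\infty$. The point is that the hard inequality is proved over $K_\infty$, \emph{not} over $L$.

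Your key lemma, by contrast, demands a pseudo Cauchy sequence of transcendental type over the full field $F=L_n$, and at stage $n$ a transcendence basis of $L_n|K$ already includes the previously constructed $z_1,\ldots,z_n$. The MacLane--Schilling increments you describe --- powers of a single value-transcendental $t$, or elements realizing high order in $vL/vK$, etc.\ --- yield a pseudo limit transcendental over $K(t)$ (this is exactly the $i=1$ case of the paper's lemma), but being transcendental over $K(t)$ does \emph{not} imply transcendental over $F$ when $\trdeg(F|K)>1$. To defeat a polynomial $f\in K(x_1,\ldots,x_r)[X]$ you need control over how its coefficients, which may involve $x_2,\ldots,x_r$ in arbitrary ways, interact with your increments; the phrase ``fast enough relative to a running bound on those degrees'' does not explain how a sequence built only from $t$ achieves this. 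In fact, obtaining one element transcendental over $F$ already requires the paper's lemma up to $i=\trdeg(F|K)$: one takes $y_1,\ldots,y_r$ algebraically independent over $K(t)$ and argues by transcendence-degree count that not all can be algebraic over $F$. So your key lemma, for each fixed stage, is essentially equivalent in difficulty to the paper's full multivariable lemma, and your sketch does not supply the missing argument. The paper's device of proving independence only over $K_\infty$ and then invoking $\trdeg(L|K_\infty)<\infty$ at the very end is precisely what sidesteps this.
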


Note that the cofinality of $vL$ is equal to the cofinality of
$vK$ unless there is some $\gamma$ in $vL$ which is larger than every
element of $vK$. In that case, because $L|K$ has finite transcendence
degree, $vL$ will have countable cofinality, no matter what the
cofinality of $vK$ is.

Note further that the condition in the residue-algebraic case always
holds when $Lv|Kv$ contains an infinite separable-algebraic
subextension; this is a consequence of the Theorem of the Primitive
Element. There is no analogue of this theorem in abelian groups;
therefore, the first condition in the value-algebraic case does not
follow from the second. As an example, take $q$ to be a prime different
from $\chara Kv$ and consider the case where $vL/vK$ is an infinite
product of $\Z/q\Z$. Under the second condition, however, the result can
easily be deduced from the separable-algebraic case by passing to a
henselization $L^h$ of $L$ and using Hensel's Lemma to show that $L^h|K$
admits the required subextension. For the details, see the proof of
Theorem~\ref{MTai} in Section~\ref{secttrmi}.

The key assumption in the separable-algebraic case is that the
separable-algebraic subextension remains infinite when passing to the
respective henselizations. We show that this condition is crucial.
Take a valued field $(k,v)$ which has a transcendental maximal immediate
extension $(M,v)$. We know that $(M,v)$ is henselian
(cf.~Lemma~\ref{mfhdl}). Take a transcendence basis $\mathcal{T}$ of
$M|k$ and set $K:= k(\mathcal{T})$. Then from Lemma \ref{infH} it
follows that the henselization $K^h$ of $K$ inside of $(M,v)$ is an
infinite separable-algebraic subextension of $(M|K,v)$. But $M$ is a
maximal immediate extension of $L:=K^h$ and $M|L$ is algebraic. Hence
the assertion of Theorem \ref{MTai} does not necessarily hold without
the condition that $L_0^h|K^h$ is infinite.

\pars
An interesting special case is given when $(K,v)$ is itself a maximal
field. In this case, it is well known that if $(L|K,v)$ is a finite
extension, then $(L,v)$ is itself a maximal field. So we ask what
happens if $(L|K,v)$ is infinite algebraic or transcendental of finite
transcendence degree. Under which conditions could $(L,v)$ be again a
maximal field? This question will be addressed in Section~\ref{sectemf},
where all of the following theorems will be proved.

\begin{theorem}                           \label{MT2}
Take a maximal field $(K,v)$ and an infinite algebraic extension
$(L|K,v)$. Assume that $L|K$ contains an infinite separable subextension
or that
\begin{equation}                            \label{finpdeg}
(vK:pvK)[Kv:Kv^p]\><\>\infty\>,
\end{equation}
where $p$ is the characteristic exponent of $Kv$. Then every maximal
immediate extension of $(L,v)$ has infinite transcendence degree over
$L$.
\end{theorem}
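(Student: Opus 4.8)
The plan is to deduce Theorem~\ref{MT2} from Theorem~\ref{MTai} by checking that $(L|K,v)$ falls into one of the four cases listed there. Since $L|K$ is algebraic, $vL/vK$ is a torsion group and $Lv|Kv$ is algebraic, so the valuation-transcendental case is excluded; I aim for one of the other three. By Lemma~\ref{mfhdl}, $(K,v)$ is henselian, so $K=K^h$. \emph{First case: $L|K$ contains an infinite separable subextension $L_0|K$.} Fix a henselization $L^h$ of $L$; inside it sits a henselization $L_0^h$ of $L_0$, and $L_0^h\supseteq L_0\supseteq K=K^h$. Since $[L_0:K]$ is infinite, so is $[L_0^h:K^h]$, and the separable-algebraic case of Theorem~\ref{MTai} applies.

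\emph{Second case: $(vK:pvK)[Kv:Kv^p]<\infty$ and $L|K$ has no infinite separable subextension.} Let $L_0$ be the separable-algebraic closure of $K$ in $L$; by assumption $L_0|K$ is finite, hence $(L_0,v)$ is again a maximal field (a finite extension of a maximal field is maximal) and is henselian. By the fundamental inequality $(vL_0:vK)$ and $[L_0v:Kv]$ are finite, whence $(vL_0:pvL_0)\le(vL_0:pvK)=(vL_0:vK)(vK:pvK)<\infty$ and likewise $[L_0v:L_0v^p]\le[L_0v:Kv^p]=[L_0v:Kv][Kv:Kv^p]<\infty$; also $L_0v$ and $Kv$ have the same characteristic exponent $p$. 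Replacing $K$ by $L_0$ (the conclusion to be proved refers only to maximal immediate extensions of $(L,v)$), we may assume that $L|K$ is purely inseparable of infinite degree while $(K,v)$ is maximal with $(vK:pvK)[Kv:Kv^p]<\infty$.

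Now $vL/vK$ is a $p$-group and $Lv|Kv$ is purely inseparable. If $vL/vK$ is infinite it cannot have bounded exponent: an infinite abelian $p$-group of bounded exponent has an infinite subgroup of elements of order dividing $p$, i.e.\ there are infinitely many $\gamma_i\in vL$, $\Z$-independent modulo $vK$, with $p\gamma_i\in vK$; since value groups are torsion-free, the $p\gamma_i$ are $\F_p$-independent in $vK/pvK$, contradicting $(vK:pvK)<\infty$. Hence $vL/vK$ contains elements of arbitrarily high order and the value-algebraic case applies. Similarly, if $Lv|Kv$ is infinite it cannot have bounded exponent $p^m$, since then $Lv\subseteq Kv^{1/p^m}$ and $[Kv^{1/p^m}:Kv]=[Kv:Kv^p]^m<\infty$; so $Lv$ contains elements of arbitrarily large inseparability exponent over $Kv$, hence of arbitrarily high degree, and the residue-algebraic case applies.

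It remains to treat the case that $vL/vK$ and $Lv|Kv$ are both finite while $[L:K]=\infty$. Then $L|K$ contains finite purely inseparable subextensions $F|K$ of unbounded degree but with $(vF:vK)$ and $[Fv:Kv]$ bounded, so the defects $d(F|K)=[F:K]/\big((vF:vK)[Fv:Kv]\big)$ are unbounded; this contradicts the fact that a maximal valued field is inseparably defectless (a degree-$p$ purely inseparable extension of a henselian $p$-finite field has trivial defect, and a pseudo-Cauchy/induction argument extends this to all finite purely inseparable extensions of a maximal field). Hence this case does not occur, which completes the proof. The main obstacle is precisely this last step: ruling out infinite purely inseparable extensions with trivial value-group and residue-field growth, i.e.\ the inseparable defectlessness of maximal (equivalently, here, $p$-finite henselian) valued fields; the rest is bookkeeping with the fundamental inequality and with torsion-free abelian $p$-groups.
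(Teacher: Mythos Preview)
Your argument is correct and follows the same route as the paper: reduce via the maximal separable subextension to the purely inseparable case over a maximal field satisfying (\ref{finpdeg}), use defectlessness of maximal fields to force $vL/vK$ or $Lv|Kv$ infinite, and then use (\ref{finpdeg}) to rule out bounded exponent, landing in the value-algebraic or residue-algebraic case of Theorem~\ref{MTai}. One correction: what you call ``the main obstacle'' --- inseparable defectlessness of maximal fields --- is not something you need to prove here, it is Theorem~\ref{mfhdl} in the paper; moreover your parenthetical sketch (``a degree-$p$ purely inseparable extension of a henselian $p$-finite field has trivial defect'') and the remark ``equivalently, here, $p$-finite henselian'' are not correct, since henselian with finite $p$-degree does \emph{not} in general imply defectlessness --- it is maximality that is doing the work, via Theorem~\ref{mfhdl}.
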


As an immediate consequence, we obtain:

\begin{corollary}
Take a maximal field $(K,v)$ of characteristic $0$ and an algebraic
extension $(L|K,v)$. Then $(L,v)$ is maximal if and only if $L|K$ is a
finite extension.
\end{corollary}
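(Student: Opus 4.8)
The plan is to deduce this directly from Theorem~\ref{MT2}, combined with the classical fact recalled just above its statement: a finite extension of a maximal field is again maximal.

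For the direction ``$L|K$ finite $\Rightarrow$ $(L,v)$ maximal'' there is nothing to do beyond invoking that classical fact. For the converse I would argue by contraposition. Suppose $L|K$ is infinite; since it is algebraic and $\chara K=0$, it is separable, so $L|K$ is itself an infinite separable subextension of $L|K$. Thus the hypotheses of Theorem~\ref{MT2} are met through its first alternative, and in particular condition~\eqref{finpdeg} need not be verified. Theorem~\ref{MT2} then tells us that every maximal immediate extension of $(L,v)$ has infinite transcendence degree over $L$. On the other hand, by Krull's theorem $(L,v)$ does admit a maximal immediate extension $(L^*,v)$; if $(L,v)$ were itself maximal, it would have no proper immediate extension, forcing $L^*=L$ and hence $\trdeg(L^*|L)=0$, contradicting the conclusion of Theorem~\ref{MT2}. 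Therefore $(L,v)$ is not maximal, which is what we wanted.

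The one point worth flagging is that the hypothesis concerns $\chara K$ rather than $\chara Kv$: in mixed characteristic the quantity $(vK:pvK)[Kv:Kv^p]$ appearing in \eqref{finpdeg} may well be infinite, so one cannot route the argument through that second alternative of Theorem~\ref{MT2}. It is precisely the assumption $\chara K=0$ that rescues us, since it makes every algebraic extension separable and thereby lets us use the separable alternative instead. Beyond this observation there is no real obstacle; the substance of the statement is entirely contained in Theorem~\ref{MT2}.
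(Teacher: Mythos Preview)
Your proof is correct and matches the paper's approach: the paper states the corollary as an ``immediate consequence'' of Theorem~\ref{MT2} without further argument, and your argument is precisely the natural unpacking of that claim. Your remark on mixed characteristic is apt and explains why the route through the separable alternative of Theorem~\ref{MT2} is the right one.
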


It remains to discuss the case where $L|K$ is an infinite extension,
its maximal separable subextension $K'|K$ is finite, and condition
(\ref{finpdeg}) fails. Since then also $(K',v)$ is maximal, we can
replace $K$ by $K'$ and simply concentrate on the case where $L|K$ is
purely inseparable.

Note that if the maximal field $K$ is of characteristic $p$,
then condition (\ref{finpdeg}) implies that the $p$-degree of $K$ is
finite, as it is equal to $(vK:pvK)[Kv:Kv^p]$. If condition
(\ref{finpdeg}) does not hold, then the purely inseparable extension
$K^{1/p}|K$ is infinite; since $vK^{1/p}=\frac{1}{p}vK$ and $K^{1/p}v
=(Kv)^{1/p}$, we then have that $vK^{1/p}/vK$ is of exponent $p$, every
element in $K^{1/p}v\setminus Kv$ has degree $p$ over $Kv$, and at least
one of the two extensions is infinite. Since $(K^{1/p},v)$ is again
maximal (regardless of the $p$-degree of $K$, see Lemma~\ref{1/pmax}),
this case shows that the assertion of Theorem~\ref{MTai} may fail even
when $vL/vK$ is an infinite torsion group or $Lv|Kv$ is an infinite
algebraic extension.
In fact, all possible cases can appear for infinite $p$-degree:

\begin{theorem}                             \label{infpdeg}
Take a maximal field $(K,v)$ of characteristic $p>0$ for which condition
(\ref{finpdeg}) fails (which is equivalent to $K$ having infinite
$p$-degree). Take $\kappa$ to be the maximum of $(vK:pvK)$ and
$[Kv:Kv^p]$, considered as cardinals. Then:
\sn
a) \ The valued field $(K^{1/p},v)$ is again maximal, although
$vK^{1/p}/vK$ is an infinite torsion group or $K^{1/p}v|Kv$ is an
infinite algebraic extension.
\sn
b) \ For every $n\in\N$ and every infinite cardinal $\lambda\leq\kappa$,
there are subextensions $(L_n|K,v)$ and $(L_\lambda|K,v)$ of $(K^{1/p}
|K,v)$ such that $(K^{1/p}|L_\lambda,v)$ is an immediate algebraic
extension of degree $\lambda$ and $(K^{1/p}|L_n,v)$ is an
immediate algebraic extension of degree $p^n$.
\sn
c) \ There is a purely inseparable extension $(L|K,v)$ with
\sn
$\bullet$ \ $vL=\frac{1}{p}vK$ and $Lv=Kv$ if $(vK:pvK)=\infty$,
\sn
$\bullet$ \ $vL=vK$ and $Lv=(Kv)^{1/p}$ if $[Kv:Kv^p]=\infty$,
\sn
such that every maximal immediate extension of $(L,v)$
has transcendence degree at least $\kappa$. In both cases, $L$ can also
be taken to simultaneously satisfy $vL=\frac{1}{p}vK$ and
$Lv=(Kv)^{1/p}$.
\pars
If the cofinality of $vK$ is countable, then in b), $K^{1/p}$ can be
replaced by the completion of $L_\lambda$ or $L_n$, respectively, and in
c), ``maximal immediate extension'' can be replaced by ``completion''.
\end{theorem}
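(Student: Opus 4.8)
The plan is to prove Theorem~\ref{infpdeg} by systematically reducing each part to one of the four cases of Theorem~\ref{MTai}, using the purely inseparable structure of $K^{1/p}|K$ together with the already-cited lemmas (Lemma~\ref{1/pmax} for maximality of $K^{1/p}$, and the lemmas describing henselizations and maximal fields). The starting observation, valid throughout, is that since $K$ has characteristic $p$ and infinite $p$-degree, we have $[K^{1/p}:K]=\infty$, $vK^{1/p}=\frac1p vK$ with $vK^{1/p}/vK$ of exponent $p$, and $K^{1/p}v=(Kv)^{1/p}$; moreover $(vK:pvK)[Kv:Kv^p]=\infty$ forces at least one of the two factors to be infinite, and $\kappa$ is the max of the two. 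Part a) is essentially immediate: $(K^{1/p},v)$ is maximal by Lemma~\ref{1/pmax}, and the group-theoretic/residue-theoretic dichotomy just recorded shows that one of ``$vK^{1/p}/vK$ is an infinite torsion group'' or ``$K^{1/p}v|Kv$ is an infinite algebraic extension'' holds, depending on which of $(vK:pvK)$, $[Kv:Kv^p]$ is infinite. (Note this is not a contradiction with Theorem~\ref{MTai} failing, because for $L=K^{1/p}$ over $K$ the relevant extension $K^{1/p}|K$ need not stay infinite after henselizing — indeed once we henselize, the ramification is absorbed.)

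For part b), I would build the intermediate fields by hand inside $K^{1/p}$. Since $K^{1/p}|K$ is purely inseparable of exponent $1$, every intermediate field $L$ is of the form $K(S^{1/p})$ for some set $S\subseteq K$, and $[K^{1/p}:L]$ is controlled by the $\F_p$-dimension of the image of $S$ in $K^{1/p}/K$ (equivalently, by how much of a $p$-basis one omits). Using that $(vK:pvK)[Kv:Kv^p]$ is infinite, I can choose elements of $K$ whose $p$-th roots generate prescribed pieces of $\frac1p vK/vK$ or of $(Kv)^{1/p}|Kv$, so that for each $n$ one omits exactly $n$ ``independent'' generators to get $L_n$ with $[K^{1/p}:L_n]=p^n$, and for each infinite $\lambda\le\kappa$ one omits a set of size $\lambda$ to get $[K^{1/p}:L_\lambda]=\lambda$ (here one uses that an infinite-dimensional $\F_p$-vector space of dimension $\kappa$ has, for every infinite $\lambda\le\kappa$, a quotient of dimension $\lambda$). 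That the extension $K^{1/p}|L_\lambda$ (resp.\ $K^{1/p}|L_n$) is immediate must be arranged in the construction: one picks the omitted generators so that their $p$-th roots lie in the value group $vK$ and residue field $Kv$ already realized by $L_\lambda$ — i.e.\ one distributes the ``new'' value-group and residue-field contributions into $L_\lambda$ itself and keeps the top extension immediate. The final clause about replacing $K^{1/p}$ by the completion when $\mathrm{cf}(vK)$ is countable follows because then $vL_\lambda$ has countable cofinality and the completion already captures the immediate algebraic extension; this part I expect to be essentially bookkeeping once the constructions are set up.

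For part c), the goal is the reverse extreme: a purely inseparable $L|K$ with $L|K$ itself infinite even after henselization, so that Theorem~\ref{MTai} applies and forces every maximal immediate extension of $(L,v)$ to have large transcendence degree. The natural choice is to take $L$ to be (a suitable subfield of) $K^{1/p}$ generated by $p$-th roots of a well-chosen infinite independent family: if $(vK:pvK)=\infty$, pick $a_i\in K$ with $va_i$ independent modulo $pvK$ and set $L=K(a_i^{1/p}:i)$, giving $vL=\frac1p vK$, $Lv=Kv$, and $vL/vK$ an infinite torsion group of exponent $p$; symmetrically in the residue case. Then to invoke Theorem~\ref{MTai} I must check that the separable-algebraic condition is met — but $L|K$ is purely inseparable, so instead I use the value-algebraic case (first condition fails since exponent is $p$, but I must verify the extension is not killed by henselization): the point is that $vL/vK$ being infinite of exponent $p$ is not directly one of the four cases, so the correct route is to observe that passing to a henselization $L^h$ and then arguing as in the remark after Theorem~\ref{MTai} — or, more robustly, noting that $L^h|K^h$ is still an infinite purely inseparable extension (ramification theory: henselization does not collapse purely inseparable defect-free extensions when the residue/value data genuinely grows), hence after a further reduction lands in the residue- or value-algebraic case with the ``arbitrarily high order/degree'' replaced by the infinite-torsion-with-orders-prime-to-$p$ clause — \emph{wait}, here the orders are powers of $p$, so that clause does not apply either. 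The honest resolution, and the step I expect to be the main obstacle, is that part c) does \emph{not} follow from Theorem~\ref{MTai} as stated but must be proved by a direct MacLane–Schilling-style construction adapted to the purely inseparable infinite-$p$-degree situation: one explicitly exhibits $\kappa$-many elements in a maximal immediate extension of $(L,v)$ that are algebraically independent over $L$, by iterating the combinatorial construction $\kappa$ times using the infinitely many independent $p$-th-root generators as the source of ever-new pseudo-Cauchy behaviour. So the bulk of the work in c) is this construction; the value-group/residue-field bookkeeping ($vL=\frac1p vK$ or $vL=vK$, and the simultaneous case) is then arranged exactly as in b), and the countable-cofinality addendum again follows because the construction can be carried out inside the completion.
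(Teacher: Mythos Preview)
Your treatment of part a) is correct and matches the paper.

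For part b) you have the outline backwards. You propose to build $L_\lambda$ by ``omitting $\lambda$ independent generators'' from a generating set and arranging that the omitted ones do not contribute new value-group or residue-field data. But the generators that witness $\frac{1}{p}vK/vK$ and $(Kv)^{1/p}|Kv$ are exactly the ones that \emph{do} contribute; if you omit them you lose immediacy. The actual content of part b) is the opposite: one first defines $L_\infty:=K(a^{1/p},b^{1/p}\mid a\in A,\,b\in B)$ using full sets of value- and residue-generators, so that $K^{1/p}|L_\infty$ is immediate by construction, and then the hard step is to prove $[K^{1/p}:L_\infty]\ge\kappa$. This is done by partitioning $A$ (or $B$) into $\kappa$ countably infinite pieces $A_\tau$, forming pseudo Cauchy sequences $\xi_{\tau,n}=\sum_{i\le n} c_{\tau,i}a_{\tau,i}^{1/p}$, and proving by transfinite induction on $\mu<\kappa$ that $(\xi_{\mu,n})_n$ has no pseudo limit in $K'(\xi_\tau\mid\tau<\mu)$ for any $K'$ with $K_{\mu+1}\subseteq K'\subseteq L_\infty$; Lemma~\ref{pimm} then gives a tower of immediate degree-$p$ extensions. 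Only after this does one take a minimal generating set $X$ of $K^{1/p}|L_\infty$ and set $L_n=L_\infty(X\setminus\{x_1,\dots,x_n\})$, $L_\lambda=L_\infty(X\setminus Y)$. Your sketch skips this pseudo-Cauchy argument entirely and treats it as bookkeeping; it is in fact the substance of b).

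For part c) there is a genuine obstruction to your plan. You propose $L$ to be a subfield of $K^{1/p}$, namely $K(a_i^{1/p}\mid i)$. This cannot work. If for instance $Kv$ is perfect (so $[Kv:Kv^p]=1$) while $(vK:pvK)=\infty$, then your $L$ has $vL=\frac{1}{p}vK=vK^{1/p}$ and $Lv=Kv=K^{1/p}v$, so $K^{1/p}|L$ is immediate and $K^{1/p}$ is an \emph{algebraic} maximal immediate extension of $L$: the conclusion of c) is outright false for this $L$. The paper explicitly remarks that the field $L$ of part c) is not contained in $K^{1/p}$. The correct construction lives in $K^{1/p^\infty}$: one sets
\[
z_{\tau,m}\;=\;\sum_{i=1}^{m} d_{\tau,s(m)+i}\,a_{\tau,s(m)+i}^{\,p^{-i}}
\]
with unbounded exponents $p^{-i}$, and $L=K(z_{\tau,m}\mid \tau<\kappa,\,m\in\N)$. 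The point of the growing exponents is that if $d$ is any pseudo limit of $\zeta_{\mu,n}=\sum_{m\le n} z_{\mu,m}$, then for every $n$ the element $d^{p^{n-1}}-\eta_{\mu,n}$ (with $\eta_{\mu,n}\in K$) has value congruent to $\frac{1}{p}va_{\mu,s(n)+n}$ modulo $vK$, a value not in $vL_\mu$. Thus an algebraic $d$ would force $vL_\mu(\zeta_\tau\mid\tau<\mu)(d)/vL_\mu(\zeta_\tau\mid\tau<\mu)$ to be infinite, which is impossible; hence each $(\zeta_{\mu,n})_n$ is of transcendental type and Theorem~\ref{KT2} gives the tower of immediate transcendental extensions. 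No construction confined to $K^{1/p}$ can produce this tower, because raising to the $p$-th power once already lands you in $K$ and the ``new value at every level'' mechanism collapses.

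Your diagnosis that Theorem~\ref{MTai} does not apply and that a direct MacLane--Schilling-type argument is required is correct; what is missing is the realization that the argument must take place in $K^{1/p^\infty}$, not $K^{1/p}$.
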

\n
Case b) of this theorem is a generalization of Nagata's example
(\cite[Appendix, Example~(E3.1), pp.~206-207]{[Na]}). Similar to that
example, the valued fields in b) are nonmaximal fields admitting an
algebraic maximal immediate extension. We note that the field $L$ of
part c) is \emph{not} contained in $K^{1/p}$.


It was shown by Kaplansky that if a valued field satisfies ``hypothesis
A'', then its maximal immediate extensions are unique up to isomorphism
(\cite[Theorem~5]{[Ka]}; see also \cite{[KPR]}). Kaplansky also gives an
example for a valued field for which uniqueness fails (\cite[Section
5]{[Ka]}). The question whether uniqueness always fails when hypothesis
A is violated is open. Different partial answers were given in
\cite{[KPR]} and in \cite{[W1]}. To the best knowledge of the authors,
the next theorem presents, for the first time in the literature, the
worst case of nonuniqueness:

\begin{theorem}                             \label{distinct_max}
Take a maximal field $(K,v)$ of characteristic $p>0$ satisfying one of
the following conditions:
\sn
i) \ $vK/pvK$ is infinite and $vK$ admits a set of representatives of
the cosets modulo $pvK$ which contains an infinite bounded subset, or
\sn
ii) \ the residue field extension $Kv|(Kv)^p$ is infinite and the value
group $vK$ is not discrete.
\sn
Then there is an infinite purely inseparable extension $(L,v)$ of
$(K,v)$ which admits $(K^{1/p},v)$ as an algebraic maximal immediate
extension, but also admits a maximal immediate extension of infinite
transcendence degree.
\end{theorem}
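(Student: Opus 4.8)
The plan is to build one purely inseparable extension $L|K$ by an explicit ``root-and-limit'' construction, tailored so that $(L,v)$ carries two genuinely different maximal immediate extensions, and then to verify the two required properties separately. From each hypothesis one first extracts a uniform combinatorial skeleton: in case (i), elements $a_n\in K$ ($n\in\N$) whose values $va_n$ represent pairwise distinct nonzero classes of $vK/pvK$ and all lie in some bounded interval of $vK$; in case (ii), elements $b_n\in\mathcal O_K$ whose residues are $p$-independent in $Kv$ over $(Kv)^p$, together with a strictly decreasing sequence of positive elements of $vK$ realizing the non-discreteness of $vK$. In both cases one also fixes $d_n\in K$ with $vd_n\to\infty$. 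The boundedness in (i), respectively the non-discreteness in (ii), is precisely what makes the diagonal sums below behave like honest limits, i.e.\ turns the partial sums into pseudo Cauchy sequences whose breadths increase cofinally.

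Next I would construct $L$ as a purely inseparable extension of $K$ generated by a carefully chosen family of $p$-th roots (and, where this is needed in order not to land inside $K^{1/p}$ itself, some higher $p$-power roots), subject to two competing demands. On the one hand $L$ should contain enough roots to push $vL$ and $Lv$ up to $\tfrac1p vK$ and $(Kv)^{1/p}$; then, since $(K^{1/p},v)$ is maximal by Lemma~\ref{1/pmax}, $(K^{1/p},v)$ is a maximal immediate extension of $(L,v)$ which is algebraic over $L$ — the ``Nagata-type'' half of the construction, parallel to the construction in Theorem~\ref{infpdeg}. On the other hand $L$ must be held back from its perfect hull so that a diagonal element $z$, obtained as a convergent sum of $d_n$ times $p$-power roots attached to the $a_n$ or $b_n$, lies outside $L$ and produces over $L$ a pseudo Cauchy sequence of \emph{transcendental}, not algebraic, type. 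In characteristic $p$ the danger is the Frobenius collapse in which some $z^{p^e}$ already lies in $L$, forcing the sequence to be of algebraic type; avoiding this is exactly where the special shape of hypotheses (i), (ii) is spent.

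Given such a transcendental-type pseudo Cauchy sequence over $L$, I would then iterate, feeding the resulting (non-immediate) intermediate data into the MacLane–Schilling construction from the proof of Theorem~\ref{MTai} — its value-algebraic case under hypothesis (i), its residue-algebraic case under hypothesis (ii), passing to henselizations exactly as in that proof — to obtain an immediate extension $(L',v)$ of $(L,v)$ of infinite transcendence degree. Any maximal immediate extension of $(L',v)$ is then a maximal immediate extension of $(L,v)$ of infinite transcendence degree, and it is distinct from $(K^{1/p},v)$ since a maximal field admits no proper immediate extension; this exhibits the desired worst-case non-uniqueness.

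I expect the main obstacle to be the middle step: controlling $L$ simultaneously from above (so that $K^{1/p}$ remains immediate over it) and from below (so that a transcendental-type gap survives over $L$), and in particular checking — this is exactly what boundedness, respectively non-discreteness, is designed for — that the diagonal pseudo Cauchy sequences really are of transcendental type over $L$ and stay so after passing to henselizations. By contrast, once $L$ has been pinned down the algebraic half reduces to Lemma~\ref{1/pmax} and a routine immediateness/degree computation, and the infinite-transcendence-degree half is just an invocation of the constructive method already set up for Theorem~\ref{MTai}.
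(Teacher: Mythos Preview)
Your proposal has a genuine gap at the ``middle step'', and it is not merely a technical obstacle but a structural obstruction. You want $L\subseteq K^{1/p}$ with $(K^{1/p},v)$ a maximal immediate extension of $(L,v)$, and simultaneously a pseudo Cauchy sequence in $(L,v)$ of \emph{transcendental} type. These two requirements are incompatible: any pseudo Cauchy sequence in $L$ without pseudo limit in $L$ has a pseudo limit in the maximal field $K^{1/p}$, and that pseudo limit is algebraic over $L$; hence (by the lemma preceding Lemma~\ref{pimm}) the sequence does not fix the value of its minimal polynomial and is therefore of algebraic type. Your remark about adjoining ``higher $p$-power roots'' to escape $K^{1/p}$ does not help, since then $K^{1/p}$ is no longer an extension of $L$ at all. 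For the same reason the value-algebraic and residue-algebraic cases of Theorem~\ref{MTai} are not directly available: with $L\subseteq K^{1/p}$ one has $vL/vK$ of exponent $\leq p$ and every element of $Lv$ of degree $\leq p$ over $Kv$, so neither ``arbitrarily high order'' nor ``arbitrarily high degree'' is on offer.

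The paper's proof resolves this by an extra deformation step you have not anticipated. One takes $L=L_\infty$ as in part b) of Theorem~\ref{infpdeg}, so that $(K^{1/p}|L,v)$ is immediate and algebraic. One then constructs the pseudo limits $\xi_N\in K^{1/p}$ of suitable pseudo Cauchy sequences in $L$ and uses the boundedness in (i), respectively the non-discreteness in (ii), to show that $\xi_N\notin L^c$ --- this, not transcendentality, is what those hypotheses buy. That puts one exactly in the situation of Proposition~\ref{deformthm}, which converts each immediate purely inseparable degree-$p$ extension generated by $\xi_N$ into an immediate \emph{separable} Artin--Schreier extension $L(\vartheta_N)|L$. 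Iterating produces an infinite immediate separable-algebraic extension $F=L(\vartheta_m\mid m\in\N)$ with unique extension of $v$, hence linearly disjoint from $L^h|L$. Now the \emph{separable-algebraic} case of Theorem~\ref{MTai} applies to $F$ and gives a maximal immediate extension of $F$ (and hence of $L$) of infinite transcendence degree. The missing ingredient in your plan is precisely this passage through Proposition~\ref{deformthm} and the separable-algebraic case; without it, the two halves of the construction cannot coexist.
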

\n
Let us mention that the condition of i) always holds when $vK/pvK$ is
infinite and $vK$ is of finite rank, or $\Gamma/p\Gamma$ is infinite for
some archimedean component $\Gamma$ of $vK$. For example, if $\F_p$
denotes the field with $p$ elements and $G$ is an ordered subgroup of
the reals of the form $\bigoplus_{i\in\N} r_i\Z$, then the power series
field $\F_p((G))$ satisfies the condition of i). If $t_i\,$, $i\in\N$,
are algebraically independent over $\F_p\,$, then the power series field
$\F_p(t_i\mid i\in\N)((\Q))$ with residue field $\F_p(t_i\mid i\in\N)$
satisfies the condition of ii).

\parm
Finally, let us discuss the case of transcendental extensions $(L,v)$ of
a maximal field $(K,v)$. In view of the valuation-transcendental case of
Theorem~\ref{MTai}, it remains to consider the
\bfind{valuation-algebraic case} where $vL/vK$ is a torsion group and
$Lv|Kv$ is algebraic. Here is a partial answer to our above question:

\begin{theorem}                             \label{maxtrext}
Take a maximal field $(K,v)$ and a transcendental extension $(L,v)$ of
$(K,v)$ of finite transcendence degree. Assume that $Lv|Kv$ is
separable-algebraic and $vL/vK$ is a torsion group such that the
characteristic of $Kv$ does not divide the orders of its elements. Then
$Lv|Kv$ or $vL/vK$ is infinite and every maximal immediate extension of
$(L,v)$ has infinite transcendence degree over $L$.
\end{theorem}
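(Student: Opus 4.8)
The plan is to reduce everything to Theorem~\ref{MTai}, the only real content being the assertion that at least one of $Lv|Kv$ and $vL/vK$ is infinite. Granting that, the rest is immediate: if $Lv|Kv$ is infinite, then being separable-algebraic it has finite subextensions of arbitrarily large degree, so by the Theorem of the Primitive Element $Lv$ contains elements of arbitrarily high degree over $Kv$ (as in the remark following Theorem~\ref{MTai}), and we are in the residue-algebraic case; if $vL/vK$ is infinite, then it is an infinite torsion group whose element orders are prime to the characteristic exponent of $Kv$, so the choice $\Gamma:=vL$ places us in the second alternative of the value-algebraic case (one should use this alternative, not the ``arbitrarily high order'' one, since the orders need not be unbounded). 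In either case Theorem~\ref{MTai} yields that every maximal immediate extension of $(L,v)$ has infinite transcendence degree over $L$ — and, when the cofinality of $vL$ is countable, already its completion, although the present statement asks only for the former.

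So suppose, for a contradiction, that $Lv|Kv$ and $vL/vK$ are both finite, and fix $x\in L$ transcendental over $K$. Then $vK(x)/vK$ is a subgroup of $vL/vK$, hence finite, and $K(x)v|Kv$ is a subextension of $Lv|Kv$, hence finite. Thus it suffices to prove the following fact, which is the crux of the argument and where maximality enters: \emph{if $(K,v)$ is maximal and $x$ is transcendental over $K$, then $vK(x)/vK$ is infinite or $K(x)v|Kv$ is infinite.}

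To prove this fact one analyzes the extension of $v$ from $K$ to $K(x)$ via the theory of key-polynomial (augmented) valuations, equivalently via the classification of extensions of valuations to a simple transcendental extension in terms of pseudo-Cauchy sequences; this step is the main obstacle. If the valuation has finite depth, its residue field has transcendence degree $1$ over $Kv$ unless the value-group extension already has a non-torsion part, so either $K(x)v|Kv$ is transcendental or $vK(x)/vK$ is non-torsion, and in both cases the relevant extension is infinite. If the valuation has infinite depth, it is a limit of augmentations along key polynomials $\phi_n$: if $\deg\phi_n\to\infty$, the accumulated ramification and residue indices force $vK(x)/vK$ or $K(x)v|Kv$ to be infinite; if $\deg\phi_n$ is eventually constant and $\geq 2$, the residue field still retains a transcendental over $Kv$; and if $\deg\phi_n$ is eventually $1$, then $x$ is a pseudo-limit of a pseudo-Cauchy sequence $(a_\nu)$ in $K$, which is impossible over a maximal field — if $(a_\nu)$ is of transcendental type then $K(x)|K$ is immediate, contradicting maximality, and if it is of algebraic type then, $K$ being algebraically maximal, $(a_\nu)$ has a pseudo-limit $a\in K$ with $v(x-a)>v(x-a_\nu)$ for all $\nu$, which forces the valuation to have finite depth after all, a contradiction. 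Hence $vK(x)/vK$ or $K(x)v|Kv$ is infinite; applying this to $K$ and $x\in L$ contradicts the finiteness established above, so $Lv|Kv$ or $vL/vK$ is infinite and the proof is complete. (In the present situation the intervening algebraic extensions of $K$ are tame, since $Lv|Kv$ is separable and $vL/vK$ has exponent prime to $\chara Kv$, hence again maximal, which permits a more direct organization of this case analysis.)
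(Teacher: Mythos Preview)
Your reduction to Theorem~\ref{MTai} is correct, and you correctly identify that the only real content is showing that $vL/vK$ or $Lv|Kv$ is infinite. However, your argument for this step via key-polynomial theory has a genuine gap, and the paper's route is both much shorter and more directly tied to the tameness hypotheses.

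The problematic point is the case ``$\deg\phi_n$ eventually constant and $\geq 2$''. Your claim that ``the residue field still retains a transcendental over $Kv$'' is not correct: infinite-depth (limit) valuations on $K(x)$ are precisely the valuation-algebraic ones, so the residue field extension is algebraic --- the residue-transcendental element that exists at each finite stage $v_n$ disappears in the limit. What one must actually argue is that this case cannot arise over a maximal $K$: one passes to a finite extension of $K$ over which the key polynomials become linear, uses that finite extensions of maximal fields are again maximal, and reruns the degree-$1$ analysis there. You do not do this, and even your degree-$1$ case is not fully closed: producing a single $a\in K$ with $v(x-a)>v(x-a_\nu)$ for all $\nu$ does not yet contradict infinite depth without a further (transfinite) iteration. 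None of the key-polynomial machinery is developed in the paper, so your argument is also not self-contained.

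The paper avoids all of this by working in $L^h$ rather than descending to a single $K(x)$. Let $K'$ be the relative algebraic closure of $K$ in $L^h$. The tameness hypotheses --- $Lv|Kv$ separable and the orders in $vL/vK$ prime to $\chara Kv$ --- are exactly what Lemma~\ref{vrrac} needs to force $vK'=vL$ and $K'v=Lv$, so $(L^h|K',v)$ is an immediate transcendental extension. If both $vL/vK$ and $Lv|Kv$ were finite, then $[K':K]=(vK':vK)[K'v:Kv]$ would be finite (Theorem~\ref{mfhdl}: $K$ is henselian and defectless), hence $(K',v)$ would again be maximal, contradicting the existence of the proper immediate extension $L^h|K'$. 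This is a few lines; your approach, even once repaired, proves a stronger statement than needed (it does not use tameness at all), which is why your closing parenthetical remark about tame intermediate extensions is disconnected from the body of your argument.
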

\sn
We do not know an answer in the case where the conditions on the value
group and residue field extensions fail.


%
%
\section{Preliminaries}
By $va$ we denote the value of an element $a\in K$, and by $av$ its
residue. Given any subset $S$ of $K$, we define
\[
vS\>=\>\{va\mid 0\neq a\in S\} \; \mbox{ and } \; Sv\>=\>
\{av\mid a\in S, va\geq 0\}\>.
\]
The valuation ring of $(K,v)$ will be denoted by ${\cal O}_K\,$.

%
%
\subsection{The fundamental inequality}
Every finite extension $(L|K,v)$ of valued fields
satisfies the \bfind{fundamental inequality} (cf.\ (17.5)
of \cite{[E]} or Theorem 19 on p.~55 of \cite{[ZS]}):
\begin{equation}                             \label{fundineq}
[L:K]\>\geq\>(vL:vK)[Lv:Kv]\>.
\end{equation}
The nature of the ``missing factor'' on the right hand side is
determined by the \bfind{Lemma of Ostrowski} which says that whenever
the extension of $v$ from $K$ to $L$ is unique, then
\begin{equation}                            
[L:K]\;=\; p^\nu \cdot (vL:vK)\cdot [Lv:Kv] \;\;\;\mbox{ with }
\nu\geq 0\>,
\end{equation}
where $p$ is the \textbf{characteristic exponent} of $Lv$, that is,
$p=\chara Lv$ if this is positive, and $p=1$ otherwise. For the proof,
see \cite[Theoreme 2, p.~236]{[R]}) or \cite[Corollary to Theorem~25,
p.~78]{[ZS]}).

The factor ${\rm d}={\rm d}(L|K,v)=p^\nu$ is called the \bfind{defect}
of the
extension $(L|K,v)$. If d$\>=1$, then we call $(L|K,v)$ a
\bfind{defectless extension}.
Note that $(L|K,v)$ is always defectless if $\chara Kv=0$.

We call a henselian field $(K,v)$ a \bfind{defectless field} if equality
holds in the fundamental inequality (\ref{fundineq}) for every finite
extension $L$ of $K$.


\begin{theorem}                                    \label{mfhdl}
Every maximal field is henselian and a defectless field.
\end{theorem}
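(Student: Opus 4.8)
My plan is to prove the two assertions in turn, reducing each to a statement about the impossibility of certain proper immediate extensions.

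First, henselianity. I would argue that if $(K,v)$ is maximal but not henselian, then the extension of $v$ to the algebraic closure $\tilde K$ is not unique, so there are at least two distinct extensions $v_1,v_2$; equivalently the henselization $K^h$ of $K$ (with respect to one chosen extension) is a proper extension of $K$. But the henselization is an immediate separable-algebraic extension (as recalled in the introduction). This contradicts maximality of $(K,v)$, which by definition admits no proper immediate extension. Hence $(K,v)$ must be henselian. This part is short and essentially definitional, given the facts already cited.

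Second, defectlessness. Now that we know $(K,v)$ is henselian, we must show equality holds in the fundamental inequality $[L:K]\geq (vL:vK)[Lv:Kv]$ for every finite extension $(L|K,v)$; since $(K,v)$ is henselian the extension of $v$ to $L$ is unique, so by the Lemma of Ostrowski $[L:K]=p^\nu(vL:vK)[Lv:Kv]$, and it suffices to rule out $\nu\geq 1$, i.e. nontrivial defect. The strategy: suppose $(L|K,v)$ has defect $\mathrm{d}=p^\nu>1$. The idea is to use the defect to manufacture a proper immediate extension of $(K,v)$ — or of an intermediate field that is still maximal — contradicting maximality. Concretely, by standard defect theory one can find a tower $K=K_0\subset K_1\subset\cdots\subset L$ in which at least one step $K_{i+1}|K_i$ is a degree-$p$ defect extension; restricting attention to that step, we have a degree-$p$ extension $(K_{i+1}|K_i,v)$ that is immediate (degree $p$ with defect $p$ forces $vK_{i+1}=vK_i$ and $K_{i+1}v=K_iv$). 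Since $K_i$ is a finite extension of the maximal field $K$ and — here I invoke the fact stated in the introduction that finite extensions of maximal fields are maximal (stated later as "it is well known") — $(K_i,v)$ is itself maximal, it cannot have the proper immediate extension $K_{i+1}$. Contradiction. Therefore $\nu=0$ and $(K,v)$ is defectless.

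The main obstacle is the defect step: locating, inside an arbitrary finite extension with nontrivial defect, a degree-$p$ subextension that is immediate. One clean way to avoid delicate tower arguments is to work directly: among all finite subextensions of $\tilde K$ with nontrivial defect choose one of minimal degree, say $(L|K,v)$; minimality should force $[L:K]=p$ and force $L|K$ to be immediate (any proper intermediate field would either have defect — contradicting minimality — or be defectless, and then the top step would carry all the defect and be immediate of smaller or equal degree). Then $L$ is a proper immediate extension of the maximal field $K$, the final contradiction. I would also need to be slightly careful about separability: if $\chara K=p$ one may need to pass to a purely inseparable degree-$p$ step, but such a step is automatically immediate-or-defectless in the relevant sense, and the argument goes through since maximality does not distinguish separable from inseparable immediate extensions. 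Alternatively, one cites the known structure theory of the defect directly. I expect the henselianity half to be a one-line argument and essentially all the work to be in packaging this minimal-degree defect argument cleanly.
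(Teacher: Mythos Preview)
Your henselianity argument is correct and is exactly what the paper does: the henselization is an immediate extension, so a maximal field coincides with its henselization.

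For defectlessness, the paper gives no argument at all; it simply cites \cite[Theorem~31.21]{[W2]}. Your attempt to supply one is therefore going beyond the paper, which is fine, but both of your proposed routes have a real difficulty.

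In your tower approach you invoke the fact that a finite extension of a maximal field is again maximal. The paper does state this later as ``well known,'' but the standard proof of that fact goes through defectlessness: one shows that the $ef$ elements $\eta_i\vartheta_j$ of Lemma~\ref{algvind} form a $K$-basis of $L$ (this is exactly equality in the fundamental inequality), and \emph{then} deduces maximality of $L$ by decomposing pseudo Cauchy sequences coordinatewise. So unless you supply an independent proof that finite extensions of maximal fields are maximal (e.g.\ via linear compactness or spherical completeness of finite-dimensional normed spaces), the argument is circular. You flag this yourself, which is good.

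Your minimal-degree alternative, however, does not close the gap. Suppose $L|K$ has minimal degree among finite extensions of $K$ with nontrivial defect, and $K'$ is a proper intermediate field. Minimality gives $\mathrm{d}(K'|K)=1$, hence $\mathrm{d}(L|K')>1$ by multiplicativity. But $L|K'$ is an extension of $K'$, not of $K$: its having defect does \emph{not} produce a defect extension of $K$ of smaller degree, so minimality is not contradicted and you cannot conclude $[L:K]=p$ or that $L|K$ is immediate. To use $L|K'$ you would again need $K'$ to be maximal, which is the same circularity.

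The direct route (essentially Warner's) avoids all of this: with $\eta_i,\vartheta_j$ as in Lemma~\ref{algvind}, let $V$ be the $K$-span of the $\eta_i\vartheta_j$ and suppose $a\in L\setminus V$. One checks that $\{v(a-c):c\in V\}$ has no maximum, extracts a pseudo Cauchy sequence in $V$ approximating $a$, and uses maximality of $K$ coordinatewise (via the valuation-independence of the $\eta_i\vartheta_j$) to produce a pseudo limit in $V$, forcing $a\in V$. This shows $[L:K]=ef$ directly, and only then does one get that finite extensions of maximal fields are maximal as a corollary.
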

\begin{proof}
The henselization of a valued field is an immediate extension.
Therefore, a maximal field is equal to its henselization and thus
henselian. For a proof that maximal fields are defectless fields, see
\cite[Theorem~31.21]{[W2]}.
\end{proof}

%
%
\subsection{Some facts about henselian fields and henselizations}
Let $(K,v)$ be any valued field. If $a\in \tilde{K}\setminus K$ is not
purely inseparable over $K$, we choose some extension of $v$
from $K$ to $\tilde{K}$ and define
\[
\mbox{\rm kras}(a,K)\>:=\> \max\{v(\tau a-\sigma a)\mid
\sigma,\tau\in \Gal (\tilde{K}|K) \mbox{\ \ and\ \ } \tau a\ne \sigma a\}
\;\in\>v\tilde{K}
\]
and call it the \bfind{Krasner constant of $a$ over $K$}. Since all
extensions of $v$ from $K$ to $\tilde{K}$ are conjugate, this does not
depend on the choice of the particular extension of $v$. For the
same reason, over a henselian field $(K,v)$ our Krasner constant
$\mbox{\rm kras}(a,K)$ is equal to
\[
\max\{v(a-\sigma a)\mid \sigma\in \Gal (\tilde{K}|K)
\mbox{\ \ and\ \ } a\ne \sigma a\}\>.
\]

\begin{lemma}                               \label{vf(a)}
Take an extension $(K(a)|K,v)$ of henselian fields, where $a$ is an
element in the separable-algebraic closure of $K$ with $va\geq 0$. Then
\begin{equation}                            \label{krasa}
va\>\leq\>\mbox{\rm kras}(a,K)\>,
\end{equation}
and for every polynomial $f=d_mX^m+\ldots+d_0\in K[X]$ of degree
$m<[K(a):K]$,
\begin{equation}                            \label{krasfa}
vf(a)\>\leq\>vd_m\,+\,m\, \mbox{\rm kras}(a,K)\;.
\end{equation}
\end{lemma}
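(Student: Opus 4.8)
I want to prove Lemma~\ref{vf(a)}, which gives an upper bound for $vf(a)$ in terms of $vd_m$, the degree $m$, and the Krasner constant $\mbox{\rm kras}(a,K)$. The overall strategy is to factor $f$ into linear factors over $\tilde K$, estimate the value of each linear factor $a-b$ (where $b$ runs over the roots of $f$), and then add the estimates. Since $(K,v)$ is henselian, I may use the simplified form of the Krasner constant, $\mbox{\rm kras}(a,K)=\max\{v(a-\sigma a)\mid \sigma\in\Gal(\tilde K|K),\ \sigma a\ne a\}$.

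First I would prove \eqref{krasa}. Since $a$ is separable-algebraic over $K$ and $a\notin K$ (as $m<[K(a):K]$ forces the nontrivial case, but actually \eqref{krasa} itself presumes $a\notin K$; if $[K(a):K]=1$ the statement is vacuous because no $f$ of negative degree exists), there is a conjugate $\sigma a\ne a$. The minimal polynomial of $a$ over $K$ has constant term $\pm\prod_i a_i$ up to sign, where the $a_i$ are the conjugates; this constant term lies in $K$ and has value $\geq 0$ only if\ldots actually the cleanest route: since $va\ge 0$ and $a$ is integral-looking, but to be careful I would instead argue as follows. By henselianity all conjugates of $a$ have the same value $va$. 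If $va>\mbox{\rm kras}(a,K)$, then for every $\sigma$ with $\sigma a\ne a$ we have $v(a-\sigma a)\le\mbox{\rm kras}(a,K)<va$, hence $v(a-\sigma a)=\min\{va,v\sigma a\}=v\sigma a=va$, a contradiction. Wait—that gives $v(a-\sigma a)=va$, not a contradiction directly. Let me reconsider: if $va>\mbox{\rm kras}(a,K)\ge v(a-\sigma a)$, but also $v(a-\sigma a)\ge\min\{va,v\sigma a\}=va$, so $va\le v(a-\sigma a)\le\mbox{\rm kras}(a,K)<va$, which is the contradiction. Good, so \eqref{krasa} holds.

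Next, for \eqref{krasfa}, write $f(X)=d_m\prod_{i=1}^m (X-b_i)$ with $b_i\in\tilde K$ the roots (chosen in a fixed extension of $v$ to $\tilde K$). Then $vf(a)=vd_m+\sum_{i=1}^m v(a-b_i)$, so it suffices to show $v(a-b_i)\le\mbox{\rm kras}(a,K)$ for each $i$. Fix a root $b=b_i$. If $b$ is purely inseparable over $K$ or otherwise $b=\sigma a$ is impossible, I still need the bound; the key point is that $b$ has degree $\le m<[K(a):K]$ over $K$, so $b\notin K(a)$ in general but more importantly $b\ne a$. Here is where I would invoke Krasner's Lemma (or its proof idea): if $v(a-b)>\mbox{\rm kras}(a,K)=\max_{\sigma a\ne a}v(a-\sigma a)$, then for every $\sigma\in\Gal(\tilde K|K)$ fixing $K(b)$, one has $v(\sigma a-a)\le v(\sigma a-\sigma b)+\ldots$ wait, $\sigma b=b$, so $v(\sigma a-b)=v(\sigma(a-b))=v(a-b)$ (using that $\sigma$ preserves the chosen valuation up to the henselian uniqueness, so values of conjugates agree), hence $v(\sigma a-a)\ge\min\{v(\sigma a-b),v(b-a)\}=v(a-b)>\mbox{\rm kras}(a,K)$, forcing $\sigma a=a$; thus every $K(b)$-automorphism fixes $a$, so $a\in K(b)$ by separability of $a$ over $K$ (hence over $K(b)$), giving $[K(a):K]\le[K(b):K]\le m$, contradicting $m<[K(a):K]$. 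Therefore $v(a-b)\le\mbox{\rm kras}(a,K)$, and summing gives \eqref{krasfa}.

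**Main obstacle.** The delicate point is the handling of conjugates and valuations in $\tilde K$ when $b$ is not separable over $K$: I need that for $\sigma\in\Gal(\tilde K|K)$ (which only moves separable elements) and for the fixed extension of $v$ to $\tilde K$, the equality $v(\sigma x)=vx$ holds—this is exactly the henselianity of $K$ (uniqueness of the extension to $\tilde K$), so $\sigma$ is an isometry. I also need to be slightly careful that $\Gal(\tilde K|K(b))$ still acts transitively enough on the conjugates of $a$ over $K(b)$; since $a$ is separable over $K$ it is separable over $K(b)$, so its $K(b)$-conjugates are exactly the $\sigma a$ for $\sigma$ fixing $K(b)$, and $a\in K(b)$ iff all of them equal $a$. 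Packaging this cleanly, while keeping track that $\mbox{\rm kras}$ is computed over $K$ and not over $K(b)$ (the inequality goes the right way since more automorphisms can only enlarge the max, but we only need the $K$-version as an upper bound), is the part that requires the most care.
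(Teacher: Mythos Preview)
Your proposal is correct and follows essentially the same route as the paper: factor $f(X)=d_m\prod_{i=1}^m(X-b_i)$ over $\tilde K$ and bound each $v(a-b_i)$ by $\mbox{\rm kras}(a,K)$ via Krasner's Lemma, using that $[K(b_i):K]\le m<[K(a):K]$. The only cosmetic differences are that the paper invokes Krasner's Lemma as a black box while you unroll its proof, and that the paper proves \eqref{krasa} directly (from $v(a-\sigma a)\ge\min\{va,v\sigma a\}=va$) rather than by contradiction; your extra care about possibly inseparable roots $b_i$ is warranted and handled correctly.
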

\begin{proof}
Since $(K,v)$ is henselian, $v\sigma a=a$ and therefore $v(a-\sigma
a)\geq va$ for all $\sigma$. This yields inequality (\ref{krasa}).

Take any element $b$ in the separable-algebraic closure of $K$ with
$[K(b):K]$ $<[K(a):K]$. Then $v(a-b)\leq\mbox{\rm kras}(a,K)$ since
otherwise, Krasner's Lemma would yield that $a\in K(b)$ and $[K(b):K]
\geq [K(a):K]$. If we write $f(X)=d_m\prod_{i=1}^m (X-b_i)$, then
$[K(b_i):K]\leq\deg (f)<[K(a):K]$. Hence,
\[
vf(a)\>=\>vd_m\,+\,\sum_{i=1}^{m} v(a-b_i)\>\leq\>vd_m\,+\,m\,
\mbox{\rm kras}(a,K)\;.
\]
This proves inequality (\ref{krasfa}).
\end{proof}

\begin{lemma}                                      \label{infH}
Take a nontrivially valued field $(k(\mathcal{T}),v)$, where
$\mathcal{T}$ is a nonempty set of elements algebraically independent
over $k$. Then the henselization of $(k(\mathcal{T}),v)$ inside of any
henselian valued extension field is an infinite extension of
$k(\mathcal{T})$.
\end{lemma}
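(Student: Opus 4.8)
I want to show that the henselization $H$ of $(k(\mathcal{T}),v)$ inside a henselian extension is infinite over $k(\mathcal{T})$. The natural strategy is to produce, for every $n$, a separable-algebraic extension of $k(\mathcal{T})$ of degree $\geq n$ that already lies inside $H$. Since the henselization is the same whether we work over the full field $k(\mathcal{T})$ or over a subfield $k(t)$ generated by a single transcendental element (the henselization of $k(t)$ sits inside the henselization of $k(\mathcal{T})$), it suffices to treat the case $\mathcal{T}=\{t\}$, i.e.\ to show that the henselization of a nontrivially valued rational function field $(k(t),v)$ is infinite.

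**Key steps.** First I would reduce to the one-variable case as above. Then, since $v$ is nontrivial on $k(t)$, pick $c\in k(t)$ with $vc\neq 0$; replacing $c$ by $c^{-1}$ if necessary we may assume $vc<0$, and after a further adjustment we can arrange to have an element of strictly negative value that is transcendental over $k$ — most simply, $vt<0$ by replacing $t$ with $1/t$ (note $v$ cannot vanish on all of $k$, but even if $vk=\{0\}$ we still get $vt\neq 0$ since $v$ is nontrivial, and if $vt=0$ we pass to $t-a$ or $1/(t-a)$ for a suitable $a\in k$). Now for each prime $\ell$ different from $\chara k$, consider the polynomial $X^\ell - t$ over $k(t)$: it is separable and, by the Eisenstein-type/Newton-polygon argument, irreducible over $k(t)$ because $vt$ is not divisible by $\ell$ in the value group generated so far — more carefully, one checks $(v k(t) : v k(t^{1/\ell})) = \ell$ using the fundamental inequality, which forces $[k(t,t^{1/\ell}):k(t)] = \ell$. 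The crucial point is that $t^{1/\ell}$ (for a chosen root) lies in the henselization: the extension $k(t,t^{1/\ell})\,|\,k(t)$ is immediate-free in the value group only, but one argues that within the henselization $X^\ell - t$ already factors, or rather that $t^{1/\ell}\in H$; this follows because $v$ on $k(t)$ has a unique extension to $k(t,t^{1/\ell})$ (the ramification is tame and totally ramified, hence the extension is linearly disjoint from the henselization's "inertia part"... ) — hmm, this needs care. The cleaner route: the henselization $H$ of $(k(t),v)$ is a model of the theory "henselian valued field with value group $vk(t)$ and residue field $k(t)v$", and $vH = vk(t)$, $Hv = k(t)v$; to get infinitely many subextensions I instead take elements $t, t+1, t+a_3, \dots$ and note... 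Let me reconsider.

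**The cleanest approach.** For $i\in\N$, let $a_i$ range over an infinite subset of $k$ (possible if $k$ is infinite; if $k$ is finite we use a different device), and consider roots $b_i$ of $X^2 - X - 1/(t-a_i)$ if $\chara k \neq 2$, or of $X^2 - (t - a_i)$ otherwise; these are Artin–Schreier or Kummer/ramified generators whose defining equations have, after clearing denominators, residues forcing irreducibility and Henselian liftability. A uniform and correct statement is: for each $i$, the element $t - a_i$ (or $1/(t-a_i)$) has value not in $2 v k(t)$ or its residue (if value zero) generates a nontrivial residue extension, and by Hensel's Lemma applied in $H$ together with the irreducibility over $k(t)$, one gets $[k(t, b_1, \dots, b_n):k(t)] \geq 2^n$ with all $b_i\in H$. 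Thus $H\,|\,k(t)$ is infinite, hence so is the henselization of $(k(\mathcal{T}),v)$.

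**Main obstacle.** The genuine difficulty is handling the case where both $k$ and the residue field $k(t)v$ are small (e.g.\ $k$ finite with $vk = \{0\}$), so that one cannot vary a parameter $a\in k$ to get infinitely many independent separable extensions directly; there one must exploit that $vt$ generates a nontrivial value, take elements $t^{1/\ell}$ for infinitely many primes $\ell\neq\chara k$, prove $[k(t,t^{1/\ell}):k(t)]=\ell$ via the fundamental inequality, and then verify that a root of $X^\ell - t$ lies in $H$ — which holds because tame totally ramified extensions are "henselian-generated" in the sense that $v$ extends uniquely and $k(t,t^{1/\ell})\,|\,k(t)$ embeds into $H$ (equivalently, $H$ already contains a compatible $\ell$-th root of $t$ since $\sqrt[\ell]{t}$ is, by Hensel, determined by its approximations over $k(t)$). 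Making this last embedding argument precise — that the henselization absorbs all tame extensions generated by roots of elements — is where I expect to spend the most effort, and it is presumably what the authors' proof streamlines, perhaps by invoking a standard structure theorem for henselizations rather than the ad hoc polynomials above.
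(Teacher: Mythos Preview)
Your proposal contains a fundamental error: the henselization $H$ of $(k(t),v)$ is an \emph{immediate} extension, so $vH = vk(t)$ and $Hv = k(t)v$. Consequently, if $vt$ is not divisible by $\ell$ in $vk(t)$, then $t^{1/\ell}\notin H$, because $v(t^{1/\ell}) = \tfrac{1}{\ell}vt \notin vk(t) = vH$. Tame totally ramified extensions are precisely the sort of extensions that do \emph{not} embed into the henselization; they sit strictly above it in the ramification filtration. So your fallback argument for the ``main obstacle'' (finite $k$) collapses, and Hensel's Lemma cannot produce a root of $X^\ell - t$ in $H$ when $vt>0$, since the reduction $X^\ell$ has no simple root. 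The same objection undermines several of your other candidates: $X^2 - (t-a_i)$ with $v(t-a_i)$ odd, or $X^2 - X - 1/(t-a_i)$ with $v(t-a_i)>0$, would force new values or residues and hence cannot have roots in $H$.

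The paper's proof avoids this trap neatly. It fixes $t\in\mathcal{T}$ with $vt>0$ and takes iterated roots $\vartheta_{i+1}$ of $X^2 - X - \vartheta_i$ (starting from $t$). Since $v\vartheta_i>0$, the reduction $X^2 - X$ has the simple root $0$, so Hensel's Lemma places each $\vartheta_{i+1}$ inside $H$ with $v\vartheta_{i+1}>0$; this works uniformly in all characteristics and needs no hypothesis on $k$. The remaining task---showing that the tower $k(\mathcal{T})(\vartheta_i\mid i\in\N)$ is genuinely infinite over $k(\mathcal{T})$---is handled by a device you did not anticipate: one introduces a \emph{second} valuation $w$, the $t^{-1}$-adic valuation trivial on $k(\mathcal{T}\setminus\{t\})$, under which $wt<0$ forces $w\vartheta_i = \tfrac{1}{2^i}wt$. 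The resulting $w$-value group contains the $2$-divisible hull of $\Z$, and the fundamental inequality then rules out finiteness. This use of an auxiliary valuation to detect degree growth is the key idea your sketch is missing.
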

\begin{proof}
Set $F:=k(\mathcal{T})$ and take a henselization $F^h$ of $F$ inside of
some henselian valued extension field. Pick an arbitrary $t\in
\mathcal{T}$. Without loss of generality we can assume that $vt>0$. By
Hensel's Lemma, $F^h$ contains a root $\vartheta_1$ of the polynomial
$X^2-X-t$ such that $v\vartheta_1>0$. We proceed by induction. Once we
have constructed $\vartheta_i$ with $v\vartheta_i>0$ for some $i\in\N$,
we again use Hensel's Lemma to obtain a root $\vartheta_{i+1}\in F^h$ of
the polynomial $X^2-X-\vartheta_i$ with $v\vartheta_{i+1}>0$.

It now suffices to show that the extension $F(\vartheta_i\,|\,i\in\N)|F$
is infinite. To this end, we consider the $t^{-1}$-adic valuation $w$ on
$F=k(\mathcal{T}\setminus \{t\})(t^{-1})$ which is trivial on $k(\mathcal{T}
\setminus \{t\})$. We note that $wF=\Z$. Since $wt<0$, we obtain that
$w\vartheta_1=\frac{1}{2} wt$ and by induction, $w\vartheta_i=
\frac{1}{2^i} wt$. Therefore, the $2$-divisible hull of $\Z$ is
contained in $wF(\vartheta_i\,|\,i\in\N)$. In view of the fundamental
inequality (\ref{fundineq}), this shows that $F(\vartheta_i\,|\,i\in\N)|
F$ cannot be a finite extension.
\end{proof}

\begin{lemma}                               \label{vrrac}
Assume $(L,v)$ to be henselian and $K$ to be relatively
separable-algebraically closed in $L$. Then $Kv$ is relatively
separable-algebraically closed in $Lv$. If in addition $Lv|Kv$ is
algebraic, then the torsion subgroup of $vL/vK$ is
a $p$-group, where $p$ is the characteristic exponent of $Kv$.
\end{lemma}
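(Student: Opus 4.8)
The proof breaks into two parts. For the first assertion, I want to show that if $K$ is relatively separable-algebraically closed in the henselian field $L$, then $Kv$ is relatively separable-algebraically closed in $Lv$. The plan is to argue by contraposition: suppose $\overline{a}\in Lv$ is separable-algebraic over $Kv$ but not in $Kv$. Pick a lift $a\in\mathcal{O}_L$ of $\overline{a}$ and let $\overline{f}\in Kv[X]$ be the (separable) minimal polynomial of $\overline{a}$, lifted to a monic $f\in\mathcal{O}_K[X]$ of the same degree. Then $\overline{f}'(\overline{a})\neq 0$, so $vf(a)>0$ while $vf'(a)=0$; since $(L,v)$ is henselian, Hensel's Lemma produces a root $b\in L$ of $f$ with $b\equiv a$, hence $bv=\overline{a}$. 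This root $b$ is separable-algebraic over $K$ of degree $\deg f=[Kv(\overline{a}):Kv]>1$, so $b\in L\setminus K$ is separable-algebraic over $K$, contradicting the hypothesis. Therefore no such $\overline{a}$ exists.

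For the second assertion, assume in addition that $Lv|Kv$ is algebraic, and let $\gamma\in vL$ have order $n$ modulo $vK$ with $\gcd(n,p)=1$; I must show $n=1$. Choose $c\in L$ with $vc=\gamma$, so $n\gamma=vd$ for some $d\in K$, i.e. $v(c^n/d)=0$. Let $\alpha:=(c^n/d)v\in Lv$; since $Lv|Kv$ is algebraic, $\alpha$ is algebraic over $Kv$, and by the first part applied to the perfect hull — more carefully, since $\gcd(n,p)=1$ the polynomial $X^n-\alpha$ is separable over $Kv(\alpha)$ — I can pass to the henselization and use Hensel's Lemma on $X^n-(c^n/d)$ to extract an $n$-th root: there is $e\in L$ with $v e=0$ and $ev$ an $n$-th root of $\alpha$ in $Lv$. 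Replacing $c$ by $c/e$ if needed, I may assume $\alpha=1$, so that $(c^n/d)v=1$. Now consider the polynomial $g(X)=X^n-c^n/d\in L[X]$; since $p\nmid n$ and $(c^n/d)v=1\neq 0$, $g$ factors over $\tilde K$ (even over $Kv$-lifts) and Hensel's Lemma gives a root $u\in L$ of $X^n-1$ with $(c/u)^n=d\cdot(\text{unit with residue }1)$...

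Let me restructure that last part more cleanly. After arranging $(c^n/d)v=1$, apply Hensel's Lemma to $h(X)=X^n-c^n/d$ over the henselian field $L$: since $h$ reduces to $X^n-1$ mod $v$, which is separable (as $p\nmid n$) with the simple root $1$, there is $w\in L$ with $w^n=c^n/d$ and $wv=1$. Then $(c/w)^n=d\in K$, and $v(c/w)=\gamma$. Thus $c/w$ is a root of $X^n-d\in K[X]$. If $c/w\notin K$, then since $p\nmid n$ the extension $K(c/w)|K$ is separable (the derivative $nX^{n-1}$ is nonzero), contradicting relative separable-algebraic closedness; hence $c/w\in K$, so $\gamma=v(c/w)\in vK$ and $n=1$. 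This shows the torsion subgroup of $vL/vK$ is a $p$-group.

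\textbf{Main obstacle.} The delicate point is the repeated reduction to the separable Hensel's Lemma in the value-group argument: I need $\gcd(n,p)=1$ to guarantee that $X^n-\alpha$ and $X^n-1$ are separable over the relevant residue field, and I must be careful that the element whose residue I am adjoining is genuinely algebraic over $Kv$ (which is where the hypothesis ``$Lv|Kv$ algebraic'' enters) so that the first part of the lemma — or directly Hensel — applies. Organizing the two successive applications of Hensel's Lemma (first to kill the residue obstruction, then to descend the value) without circularity is the part that requires care; everything else is routine.
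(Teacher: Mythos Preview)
Your first-part argument is essentially the paper's: lift the separable minimal polynomial $\overline{f}$ of $\overline{a}$ to a monic $f\in\mathcal{O}_K[X]$, apply Hensel's Lemma in the henselian field $L$ to obtain a root $b$ with $bv=\overline{a}$, and conclude $b\in K$ from the hypothesis. One small point you glide over is why $b$ is \emph{separable} over $K$: since $\overline{f}$ has $\deg f$ distinct roots, Hensel lifts them to $\deg f$ distinct roots of $f$, so $f$ itself is separable. The paper makes exactly this remark.

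The second part has a genuine gap. You write ``Replacing $c$ by $c/e$ if needed, I may assume $\alpha=1$'', and your restructured version begins ``After arranging $(c^n/d)v=1$''; but the existence of the element $e\in L$ with $(ev)^n=\alpha$ is never established. Applying Hensel's Lemma to $X^n-c^n/d$ over $L$ requires a simple root of $X^n-\alpha$ already in $Lv$, i.e., an $n$-th root of $\alpha$ in $Lv$, and nothing in the hypotheses guarantees one. The phrase ``by the first part applied to the perfect hull'' does not supply it: the first part tells you $Lv|Kv$ is purely inseparable, not that $Lv$ is closed under $n$-th roots.

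The missing idea, which is precisely how the paper proceeds, is to exploit that purely inseparable conclusion: since $Lv|Kv$ is purely inseparable there is $m$ with $\alpha^{p^m}\in Kv$, and one can lift $\alpha^{p^m}$ to some $c_0\in\mathcal{O}_K^\times$. Then $\bigl(c^{np^m}/(c_0 d^{p^m})\bigr)v=1$, and \emph{now} your Hensel argument on $X^n-c^{np^m}/(c_0 d^{p^m})$ goes through, producing $w\in L$ with $(c^{p^m}/w)^n=c_0 d^{p^m}\in K$. This element is separable-algebraic over $K$, hence lies in $K$, giving $p^m\gamma\in vK$; combined with $n\gamma\in vK$ and $\gcd(n,p^m)=1$, one obtains $\gamma\in vK$. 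So the hypothesis ``$Lv|Kv$ algebraic'' enters not merely to make $\alpha$ algebraic over $Kv$, as you suggest, but to force $\alpha^{p^m}$ down into $Kv$ so that the residue obstruction can be absorbed into the $K$-side.
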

\begin{proof}
Take $\zeta\in Lv$ separable-algebraic over $Kv$. Choose a monic
polynomial $g(X)\in K[X]$ whose reduction $gv(X)\in Kv[X]$ modulo
$v$ is the minimal polynomial of $\zeta$ over $Kv$. Then $\zeta$ is a
simple root of $gv$. Hence by Hensel's Lemma, there is a root $a\in
L$ of $g$ whose residue is $\zeta$. As all roots of $gv$ are
distinct, we can lift them all to distinct roots of $g$. Thus, $a$ is
separable-algebraic over $K$. From the assumption of the lemma, it
follows that $a\in K$, showing that $\zeta\in Kv$. This proves that $Kv$
is relatively separable-algebraically closed in $Lv$.

Now assume in addition that $Lv|Kv$ is algebraic. Then $Kv$ is
relatively separable-algebraically closed in $Lv$, by what we have
proved already. Take $\alpha\in vL$ and $n\in \N$ not divisible by $p$
such that $n\alpha\in vK$. Choose $a\in L$ and $b\in K$ such that
$va=\alpha$ and $vb=n\alpha$. Then $v(a^n/b) = 0$. Since $Lv|Kv$ is a
purely inseparable extension, there exists $m\in\N$ such that
$((a^n/b)v)^{p^m}\in Kv$. We choose $c\in K$ satisfying $vc=0$ and
$cv=((a^n/b)v)^{p^m}$, to obtain that $(a^{np^m}/ cb^{p^m})v=1$. So the
reduction of the polynomial $X^n-a^{np^m}/ cb^{p^m}$ modulo $v$ is
$X^n-1$. Since $n$ is not divisible by $p$, $1$ is a simple root of this
polynomial. Hence by Hensel's Lemma, there is a simple root $d\in L$ of
the polynomial $X^n-a^{np^m}/cb^{p^m}$ with $dv=1$, whence $vd=0$.
Consequently, $a^{p^m}/d$ is a simple root of the polynomial
$X^n-cb^{p^m}$ and thus is separable algebraic over $K$. Since $K$ was
assumed to be relatively separable-algebraically closed in
$L$, we find that $a^{p^m}/d\in K$. As $n$ is not divisible by $p$,
there are $k,l\in\Z$ such that $1=kn+lp^m$. This yields:
\[
\alpha\>=\> kn\alpha+lp^m\alpha\>=\> kn\alpha+l(p^m va - vd) \>=\>
k(n\alpha)+lv\left(\frac{a^{p^m}}{d}\right) \in vK.
\]
\end{proof}

%
%
\subsection{Valuation independence}
For the easy proof of the following lemma, see \cite[chapter VI,
\S10.3, Theorem~1]{[B]}
.
\begin{lemma}                                      \label{prelBour}
Let $(L|K,v)$ be an extension of valued fields. Take elements $x_i,y_j
\in L$, $i\in I$, $j\in J$, such that the values $vx_i\,$, $i\in I$,
are rationally independent over $vK$, and the residues $y_jv$, $j\in
J$, are algebraically independent over $Kv$. Then the elements
$x_i,y_j$, $i\in I$, $j\in J$, are algebraically independent over $K$.

Moreover, if we write
\[
f\>=\> \displaystyle\sum_{k}^{} c_{k}\,
\prod_{i\in I}^{} x_i^{\mu_{k,i}} \prod_{j\in J}^{} y_j^{\nu_{k,j}}\in
K[x_i,y_j\mid i\in I,j\in J]
\]
in such a way that for every $k\ne\ell$
there is some $i$ s.t.\ $\mu_{k,i}\ne\mu_{\ell,i}$ or some $j$ s.t.\
$\nu_{k,j}\ne\nu_{\ell,j}\,$, then
\begin{equation}                            \label{value}
vf\>=\>\min_k\, v\,c_k \prod_{i\in I}^{}
x_i^{\mu_{k,i}}\prod_{j\in J}^{} y_j^{\nu_{k,j}}\>=\>
\min_k\, vc_k\,+\,\sum_{i\in I}^{} \mu_{k,i} v x_i\;.
\end{equation}
That is, the value of the polynomial $f$ is equal to the least of the
values of its monomials. In particular, this implies:
\begin{eqnarray*}
vK(x_i,y_j\mid i\in I,j\in J) & = & vK\oplus\bigoplus_{i\in I}
\Z vx_i\\
K(x_i,y_j\mid i\in I,j\in J)v & = & Kv\,(y_jv\mid j\in J)\;.
\end{eqnarray*}
Moreover, the valuation $v$ on $K(x_i,y_j\mid i\in I,j\in J)$ is
uniquely determined by its restriction to $K$, the values $vx_i$ and
the residues $y_jv$.
\parm
Conversely, if $(K,v)$ is any valued field and we assign to the $vx_i$
any values in an ordered group extension of $vK$ which are rationally
independent, then (\ref{value}) defines a valuation on $L$, and the
residues $y_jv$, $j\in J$, are algebraically independent over $Kv$.
\end{lemma}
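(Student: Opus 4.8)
The plan is to prove Lemma~\ref{prelBour} in two stages: first establish the value formula (\ref{value}) for a single polynomial $f$, which simultaneously yields algebraic independence, and then read off the statements about $vL$, $Lv$, and uniqueness as corollaries. The core of the argument is the claim that the value of a polynomial $f$ in the $x_i$ and $y_j$ (with coefficients in $K$) equals the minimum of the values of its distinct monomials. Since this is cited to Bourbaki, I will only sketch it: the key point is that distinct monomials $c_k\prod x_i^{\mu_{k,i}}\prod y_j^{\nu_{k,j}}$ and $c_\ell\prod x_i^{\mu_{\ell,i}}\prod y_j^{\nu_{\ell,j}}$ cannot have the same value unless $c_k,c_\ell$ differ by something of value zero \emph{and} the exponent vectors agree, because the value of a monomial is $vc_k+\sum_i\mu_{k,i}vx_i$ (the $y_j$ contribute value $0$), and rational independence of the $vx_i$ over $vK$ forces the exponent vectors $(\mu_{k,i})_i$ to coincide if the values match; the residual ambiguity coming from the $y_j$ is then killed by algebraic independence of the residues $y_jv$ over $Kv$ applied to the homogeneous "leading part" obtained by dividing through by an element of the common minimal value.

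Concretely, I would argue as follows. Fix $f=\sum_k c_k\prod_i x_i^{\mu_{k,i}}\prod_j y_j^{\nu_{k,j}}$ with distinct exponent multi-indices. For each $k$ the monomial has value $vc_k+\sum_i\mu_{k,i}vx_i$; group the indices $k$ into blocks on which this value is a fixed $\gamma$, and let $\gamma_0=\min_k(vc_k+\sum_i\mu_{k,i}vx_i)$ be the smallest such value, attained on a block $B$. Choose $d\in K$ with $vd=\gamma_0$ (if $\gamma_0\in vK$; in general one works in $K(x_i)$ where the relevant value \emph{is} attained, using that the monomials in the $x_i$ alone already realize these values). Then $d^{-1}f$ has value $\geq 0$, and its residue is $\sum_{k\in B}\overline{c_k x^{\mu_k}/d}\cdot\prod_j (y_jv)^{\nu_{k,j}}$, a nonzero polynomial in the $y_jv$ over $Kv$ because on $B$ the exponent vectors $(\nu_{k,j})_j$ are pairwise distinct (the $(\mu_{k,i})_i$ being forced equal within a block of fixed value by rational independence of the $vx_i$) and the coefficients have value zero. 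Since the $y_jv$ are algebraically independent over $Kv$, this residue is nonzero, so $v(d^{-1}f)=0$, i.e.\ $vf=\gamma_0$, which is (\ref{value}). In particular $f\neq 0$ whenever it has at least one monomial, giving algebraic independence of the $x_i,y_j$ over $K$.

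From the value formula the structural statements follow directly: the possible values of elements of $K(x_i,y_j)$ are exactly $vc+\sum_i\mu_i vx_i$ for $c\in K$ and $\mu_i\in\Z$ (taking quotients of polynomials), so $vK(x_i,y_j\mid i,j)=vK\oplus\bigoplus_i\Z vx_i$; and an element of value $0$ has residue a ratio of two polynomials in the $y_jv$ over $Kv$, so the residue field is $Kv(y_jv\mid j\in J)$. Uniqueness of $v$ on $K(x_i,y_j)$ given $v|_K$, the $vx_i$ and the $y_jv$ is immediate from (\ref{value}), since that formula determines $v$ on every polynomial and hence on the fraction field. For the converse, given arbitrary rationally independent values assigned to the $vx_i$ in some ordered group extension of $vK$, one \emph{defines} $v$ on $K[x_i]$ by the monomial-minimum formula and checks it is a valuation (the triangle inequality is clear; multiplicativity follows because the minimum is attained at a \emph{unique} monomial in each factor, again by rational independence, so the product of leading monomials is the unique leading monomial of the product), extends it to $K(x_i)$, then further to $K(x_i)(y_j)$ by prescribing residues $y_jv$ algebraically independent over $K(x_i)v=Kv$ --- this last step being an instance of the already-treated residue-transcendental case. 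The main obstacle is the multiplicativity check for the converse direction and, in the forward direction, handling the case where the minimal value $\gamma_0$ is not itself in $vK$; both are dealt with cleanly by first passing to $K(x_i)$, where by rational independence every value $vc+\sum_i\mu_i vx_i$ is attained and the subextension $K(x_i)|K$ is already understood, before adjoining the $y_j$.
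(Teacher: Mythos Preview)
The paper does not actually give a proof of this lemma; it simply cites Bourbaki, Commutative Algebra, VI.\S10.3, Theorem~1. Your sketch is correct and is essentially the standard argument found there: use rational independence of the $vx_i$ over $vK$ to see that monomials of equal value must share the same $x$-exponent vector, then divide by a monomial of minimal value and use algebraic independence of the $y_jv$ to show the residue is nonzero, which gives (\ref{value}); the remaining assertions follow.
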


\begin{corollary}                              \label{fingentb}
Let $(L|K,v)$ be an extension of valued fields. Then
\begin{equation}                            \label{wtdgeq}
\trdeg L|K \>\geq\> \trdeg Lv|Kv \,+\, \rr (vL/vK)\;.
\end{equation}
If in addition $L|K$ is a function field and if equality holds in
(\ref{wtdgeq}), then the extensions $vL| vK$ and $Lv|Kv$ are finitely
generated.
\end{corollary}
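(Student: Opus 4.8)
The plan is to prove Corollary~\ref{fingentb} by a direct application of Lemma~\ref{prelBour}, choosing inside $L$ a well-adapted set of generators coming from the value group and residue field extensions. First I would pick elements $x_1,\dots,x_r\in L$ (where $r=\rr(vL/vK)$) whose values $vx_1,\dots,vx_r$ form a maximal rationally independent system in $vL$ over $vK$, and elements $y_1,\dots,y_s\in L$ (where $s=\trdeg Lv|Kv$, finite if $L|K$ is, which we may assume since otherwise there is nothing to prove and in the function-field case it is automatic) whose residues $y_1v,\dots,y_sv$ form a transcendence basis of $Lv|Kv$. By the first part of Lemma~\ref{prelBour} these $r+s$ elements are algebraically independent over $K$, which already gives inequality~(\ref{wtdgeq}): $\trdeg L|K\geq r+s=\rr(vL/vK)+\trdeg Lv|Kv$.

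Next, to handle the equality case, suppose $L|K$ is a function field and $\trdeg L|K = r+s$. Then $\{x_1,\dots,x_r,y_1,\dots,y_s\}$ is a transcendence basis of $L|K$, so setting $K_0:=K(x_1,\dots,x_r,y_1,\dots,y_s)$ we have that $L|K_0$ is finite. By the ``in particular'' clauses of Lemma~\ref{prelBour}, $vK_0 = vK\oplus\bigoplus_{i=1}^r\Z vx_i$ and $K_0v = Kv(y_1v,\dots,y_sv)$; in particular $vK_0|vK$ is finitely generated and $K_0v|Kv$ is finitely generated. Now I would invoke the fundamental inequality~(\ref{fundineq}) for the finite extension $L|K_0$: since $[L:K_0]<\infty$, both indices $(vL:vK_0)$ and $[Lv:K_0v]$ are finite, hence $vL/vK_0$ is a finite group and $Lv|K_0v$ is a finite extension. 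Combining, $vL|vK$ is finitely generated (as $vK_0|vK$ is finitely generated and $vL/vK_0$ is finite) and likewise $Lv|Kv$ is finitely generated. This completes the proof.

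The one point that needs a little care — and the only place where the hypothesis that $L|K$ is a function field (rather than merely of finite transcendence degree) is used — is ensuring that $s=\trdeg Lv|Kv$ is finite so that the transcendence basis argument makes sense, and that $L|K_0$ is genuinely a finite extension rather than just algebraic. For a function field, $L|K$ is finitely generated, hence $L|K_0$ is finitely generated and algebraic, therefore finite; and $\trdeg Lv|Kv\leq\trdeg L|K<\infty$ by the already-established inequality, so everything is in place. If one only assumed finite transcendence degree, $L|K_0$ could be an infinite algebraic extension and the indices in~(\ref{fundineq}) need not be finite, so the finite-generation conclusion could fail — which is exactly why the corollary states it under the function-field hypothesis. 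I do not anticipate any serious obstacle here; the argument is essentially a bookkeeping exercise once Lemma~\ref{prelBour} and the fundamental inequality are available.
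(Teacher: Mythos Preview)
Your proposal is correct and follows essentially the same approach as the paper's proof: invoke Lemma~\ref{prelBour} for the algebraic independence of the $x_i,y_j$, then for the equality case pass to the intermediate field $K_0$ (the paper calls it $L_0$) and apply the fundamental inequality~(\ref{fundineq}) to the finite extension $L|K_0$. The only cosmetic difference is that the paper first proves $\rho+\tau\le\trdeg L|K$ for \emph{arbitrary} finite $\rho,\tau$ and then deduces that $\rr(vL/vK)$ and $\trdeg Lv|Kv$ are finite, thereby avoiding the slight circularity in your opening sentence where you already set $r=\rr(vL/vK)$ and $s=\trdeg Lv|Kv$ before the inequality is available.
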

\begin{proof}
Choose elements $x_1,\ldots,x_{\rho},y_1,\ldots,y_{\tau}\in L$ such that
the values $vx_1,\ldots,vx_{\rho}$ are rationally independent over $vK$
and the residues $y_1v,\ldots,y_{\tau} v$ are algebraically independent
over $Kv$. Then by the foregoing lemma, $\rho+\tau\leq\trdeg L|K$. This
proves that $\trdeg Lv|Kv$ and the rational rank of $vL/vK$ are finite.
Therefore, we may choose the elements $x_i,y_j$ such that $\tau=\trdeg
Lv|Kv$ and $\rho=\dim_{\Q} \Q\otimes (vL/vK)$ to obtain inequality
(\ref{wtdgeq}).

Set $L_0:=K(x_1,\ldots, x_{\rho},y_1,\ldots,y_{\tau})$ and
assume that equality holds in (\ref{wtdgeq}). This means that the
extension $L|L_0$ is algebraic. Since $L|K$ is finitely generated, it
follows that this extension is finite. By the fundamental inequality
(\ref{fundineq}), this yields that $vL|vL_0$ and $Lv| L_0v$ are finite
extensions. Since already $v L_0|v K$ and $L_0v|Kv$ are finitely
generated by the foregoing lemma, it follows that also $vL|vK$ and
$Lv|Kv$ are finitely generated.
\end{proof}

\pars
The algebraic analogue to the transcendental case discussed in
Lemma~\ref{prelBour} is the following lemma (see \cite{[ZS]} for a
proof):
\begin{lemma}                               \label{algvind}
Let $(L|K,v)$ be an extension of valued fields. Suppose that $\eta_1,
\ldots,\eta_k\in L$ such that $v\eta_1,\ldots,v\eta_k\in vL$ belong to
distinct cosets modulo $vK$. Further, assume that $\vartheta_1,\ldots,
\vartheta_{\ell}\in {\cal O}_L$ such that $\vartheta_1v,\ldots,
\vartheta_{\ell}v$ are $Kv$-linearly independent. Then the elements
$\eta_i\vartheta_j\,$, $1\leq i\leq k$, $1\leq j\leq\ell$, are
$K$-linearly independent, and for every choice of elements $c_{ij}
\in K$, we have that
\[
v\sum_{i,j} c_{ij} \eta_i\vartheta_j\>=\> \min_{i,j}\, v c_{ij}
\eta_i\vartheta_j\>=\> \min_{i,j}\, (vc_{ij}\,+\,v\eta_i)\>.
\]
If the elements $\eta_i\vartheta_j$ form a $K$-basis of $L$, then
\[
vL\>=\>vK+\bigoplus_{1\leq i\leq k} \Z v\eta_i\;\;\mbox{ \ \
and \ \ }\;\; Lv\>=\>Kv(\vartheta_j v\mid 1\leq j\leq\ell)\>.
\]
\end{lemma}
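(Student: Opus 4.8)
The plan is to reduce everything to the case $k=1$, establish a value formula there, and then assemble the general statement using the ultrametric inequality. First note that since $\vartheta_1v,\dots,\vartheta_\ell v$ are $Kv$-linearly independent they are in particular nonzero, so $v\vartheta_j=0$ for all $j$. I would then prove the following special case: for any $c_1,\dots,c_\ell\in K$,
\[
v\Bigl(\sum_{j}c_j\vartheta_j\Bigr)\>=\>\min_j vc_j\>.
\]
Indeed, after multiplying by an element of $K$ whose value is $-\min_j vc_j$ (which exists, as $\min_j vc_j\in vK$) we may assume $vc_j\geq 0$ for all $j$ with equality for at least one index; then $\sum_j c_j\vartheta_j\in\mathcal{O}_L$ and its residue is $\sum_{vc_j=0}(c_jv)(\vartheta_j v)$, a nontrivial $Kv$-linear combination of the $\vartheta_jv$, hence nonzero by hypothesis. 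So the value is $0$, as claimed.

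For the general formula, write $\sum_{i,j}c_{ij}\eta_i\vartheta_j=\sum_i\eta_iw_i$ with $w_i:=\sum_j c_{ij}\vartheta_j$. For each $i$ with $w_i\neq 0$ the special case gives $v(\eta_iw_i)=v\eta_i+\min_j vc_{ij}$. Since $\min_j vc_{ij}\in vK$ and the $v\eta_i$ lie in distinct cosets modulo $vK$, the values $v(\eta_iw_i)$ (for the $i$ with $w_i\neq0$) lie in distinct cosets modulo $vK$ and are therefore pairwise distinct. A sum of elements with pairwise distinct values has value equal to the minimum of those values, whence
\[
v\sum_{i,j}c_{ij}\eta_i\vartheta_j\>=\>\min_i v(\eta_iw_i)\>=\>\min_{i,j}(vc_{ij}+v\eta_i)\>=\>\min_{i,j}vc_{ij}\eta_i\vartheta_j\>,
\]
the last equality because $v\vartheta_j=0$. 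The $K$-linear independence of the $\eta_i\vartheta_j$ follows at once: if such a combination were $0$ with some $c_{ij}\neq0$, the right-hand side would be finite while $v0=\infty$.

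Now suppose the $\eta_i\vartheta_j$ form a $K$-basis of $L$, so $[L:K]=k\ell$. By the value formula, every nonzero element of $L$ has value of the form $vc+v\eta_i$ with $c\in K$ and $1\leq i\leq k$; conversely $vK\subseteq vL$ since $K\subseteq L$, and each $v\eta_i\in vL$. Hence $vL=vK+\bigoplus_{1\leq i\leq k}\Z v\eta_i$. For the residue field extension, observe that $(vL:vK)\geq k$ because the $v\eta_i$ represent $k$ distinct cosets, and $[Lv:Kv]\geq\ell$ because $\vartheta_1v,\dots,\vartheta_\ell v$ are $Kv$-linearly independent. The fundamental inequality (\ref{fundineq}) then gives $k\ell=[L:K]\geq(vL:vK)[Lv:Kv]\geq k\ell$, so both intermediate inequalities are equalities; in particular $[Lv:Kv]=\ell$, so the $\vartheta_jv$ form a $Kv$-basis of $Lv$. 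Therefore $Lv=\sum_j Kv\,\vartheta_jv\subseteq Kv(\vartheta_jv\mid 1\leq j\leq\ell)\subseteq Lv$, and equality holds throughout.

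I expect the only genuine obstacle to be the residue-field identity in the basis case: manipulating residues of arbitrary elements of $\mathcal{O}_L$ does not by itself show that the $\vartheta_jv$ span $Lv$, and one really needs the fundamental inequality to force $[Lv:Kv]=\ell$. Everything else is routine reduction and ultrametric bookkeeping.
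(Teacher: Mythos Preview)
Your proof is correct. The paper does not actually prove this lemma; it simply refers the reader to \cite{[ZS]}. Your argument---normalize to handle the $\vartheta_j$'s first, then use the coset hypothesis on the $v\eta_i$ to separate the $i$-blocks via the ultrametric inequality, and finally invoke the fundamental inequality to pin down $[Lv:Kv]$ in the basis case---is the standard one and is essentially what one finds in Zariski--Samuel. One minor remark: the $\bigoplus$ in the statement's value-group formula is loose notation (the $\Z v\eta_i$ need not be independent of each other or of $vK$); what you actually prove, namely $vL=\bigcup_i(vK+v\eta_i)$ and hence $vL=vK+\sum_i\Z v\eta_i$, is the correct content.
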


For any element $x$ in a field extension of $K$ and every nonnegative
integer $n$, we set
\[
K[x]_n\>:=\> K+Kx+\ldots+Kx^n \>.
\]
Since $\dim_K K[x]_n\leq n+1$, we obtain the following corollary
from Lemma~\ref{algvind}:

\begin{corollary}                              \label{polvspaces}
Take a valued field extension $(K(x)|K,v)$. Then for every $n\geq 0$,
\sn
a) \ the elements of $vK[x]_n$ lie in at most $n+1$ many distinct cosets
modulo $vK$,
\sn
b) \ the $Kv$-vector space $K[x]_n v$ is of dimension at most $n+1$.
\end{corollary}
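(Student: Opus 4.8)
The plan is to derive both statements directly from Lemma~\ref{algvind}, in each case by degenerating one of the two families of elements appearing there to a single trivial member.

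For part a), I would argue by contradiction. Suppose the values of the elements of $K[x]_n$ represented at least $n+2$ distinct cosets modulo $vK$. By the definition of $vK[x]_n$, one can then pick $\eta_1,\ldots,\eta_{n+2}\in K[x]_n$ whose values $v\eta_1,\ldots,v\eta_{n+2}$ lie in pairwise distinct cosets modulo $vK$. Now apply Lemma~\ref{algvind} with $k=n+2$, $\ell=1$, and $\vartheta_1=1$; here $\vartheta_1v=1\neq 0$, so as a one-element family the residue $\vartheta_1v$ is $Kv$-linearly independent, and the hypotheses of the lemma are met. The lemma yields that the products $\eta_i\vartheta_1=\eta_i$ are $K$-linearly independent, contradicting $\dim_K K[x]_n\leq n+1$.

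For part b) the argument is symmetric. If $\dim_{Kv}K[x]_nv\geq n+2$, then by the definition of $K[x]_nv$ one can choose $\vartheta_1,\ldots,\vartheta_{n+2}\in K[x]_n$ with $v\vartheta_j\geq 0$ whose residues $\vartheta_1v,\ldots,\vartheta_{n+2}v$ are $Kv$-linearly independent. Apply Lemma~\ref{algvind} with $\ell=n+2$, $k=1$, and $\eta_1=1$, whose value $0$ trivially occupies a single coset modulo $vK$. The lemma then gives that $\eta_1\vartheta_j=\vartheta_j$ are $K$-linearly independent, again contradicting $\dim_K K[x]_n\leq n+1$.

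There is no genuine obstacle here; the only points that require a moment's care are bookkeeping ones, namely that the coset representatives in a) and the residue representatives in b) may be taken inside $K[x]_n$ (immediate from the definitions of $vK[x]_n$ and $K[x]_nv$), and that a one-element family consisting of a single nonzero element is, vacuously, linearly independent, so that the degenerate instances of Lemma~\ref{algvind} invoked above are legitimate applications of it.
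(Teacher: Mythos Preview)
Your proof is correct and is exactly the argument the paper has in mind: the paper merely states that the corollary follows from Lemma~\ref{algvind} together with $\dim_K K[x]_n\leq n+1$, and you have spelled out precisely that deduction, specializing one of the two families in the lemma to a single trivial element in each part.
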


%
%
\subsection{Immediate extensions}
We will assume some familiarity with the basic properties of pseudo
Cauchy sequences; we refer the reader to Kaplansky's paper ``Maximal
fields with valuations'' (\cite{[Ka]}). In particular, we will use the
following two main theorems:

\begin{theorem} {\rm\ \ \ (Theorem 2 of \cite{[Ka]})}\label{KT2}
\n
For every pseudo Cauchy sequence $(a_\nu)_{\nu<\lambda}$ in $(K,v)$ of
transcendental type there exists an immediate transcendental
extension $(K(x),v)$ such that $x$ is a pseudo limit of
$(a_\nu)_{\nu<\lambda}\,$. If $(K(y),v)$ is another valued extension
field of $(K,v)$ such that $y$ is a pseudo limit of
$(a_\nu)_{\nu<\lambda}\,$, then $y$ is also transcendental over $K$ and
the isomorphism between $K(x)$ and $K(y)$ over $K$ sending $x$ to $y$ is
valuation preserving.
\end{theorem}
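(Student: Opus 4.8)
The plan is to build the required extension by forcing $x$ to have value $w(f(x))$ equal to the eventual value of $vf(a_\nu)$ for each $f$, and to read off everything else from the expansion $f(X+Y)=\sum_i f_i(X)Y^i$ with $f_i\in K[X]$. After discarding initial terms we may assume $\gamma_\nu:=v(a_{\nu+1}-a_\nu)$ is strictly increasing, so that $v(a_\mu-a_\nu)=\gamma_\nu$ for all $\nu<\mu<\lambda$; recall also the basic fact that for any nonzero $f\in K[X]$ the values $vf(a_\nu)$ are eventually monotone, and that the hypothesis ``transcendental type'' means that they are eventually constant for \emph{every} nonzero $f$. Taking $x$ transcendental over $K$, define $w$ on $K[x]$ by letting $w(f(x))$ be this eventual value of $vf(a_\nu)$; transcendental type makes this well defined and finite on nonzero polynomials. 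Multiplicativity $w(fg)=w(f)+w(g)$ and the ultrametric inequality for $w$ reduce, for any fixed pair of polynomials, to the corresponding statements for $vf(a_\nu)$ once $\nu$ is large enough that the finitely many relevant values have stabilized; hence $w$ is a valuation and extends to $K(x)$.

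Next I would check that $(K(x),v)$ (writing $v$ for $w$) is immediate and that $x$ is a pseudo limit. For the value group this is immediate: $w(f(x))$ is an eventual value of a sequence in $vK$, so $wK(x)=vK$. For the residue field, suppose $w(f(x))=w(g(x))=:\delta$. Expanding $f$ around $a_\mu$ and using $v(a_\nu-a_\mu)=\gamma_\mu$ together with the eventual constancy of each $vf_i(a_\nu)$, one obtains an estimate $v(f(a_\nu)-f(a_\mu))\ge\gamma_\mu+c_f$ with $c_f$ a constant; a short computation with $f(a_\nu)/g(a_\nu)-f(a_\mu)/g(a_\mu)$ then shows this quantity has positive value once $\mu$ is large and $\nu>\mu$, so that $(f(a_\nu)/g(a_\nu))v$ is eventually a fixed element $\bar c\in Kv$. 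Lifting $\bar c$ to $c\in K$ one verifies $w(f(x)-c\,g(x))>\delta$, whence $(f(x)/g(x))v=\bar c\in Kv$; thus $K(x)v=Kv$. Finally $v(x-a_\nu)=w(X-a_\nu)$ is the eventual value of $v(a_\mu-a_\nu)$, which equals $\gamma_\nu$, so $x$ is a pseudo limit of $(a_\nu)_{\nu<\lambda}$.

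For the rigidity statement, let $y$ in some valued extension field be a pseudo limit, so $v(y-a_\nu)=\gamma_\nu$ eventually. For nonzero $f\in K[X]$ write $f(y)=\sum_i f_i(a_\nu)(y-a_\nu)^i$: the $i=0$ term has value $vf(a_\nu)$, eventually the fixed finite value $w(f(x))$, while for $i\ge 1$ the $i$-th term has value $vf_i(a_\nu)+i\gamma_\nu$, which tends to infinity because $vf_i(a_\nu)$ eventually stabilizes and the $\gamma_\nu$ are cofinal and increasing. Hence for large $\nu$ the $i=0$ term strictly dominates and $vf(y)=w(f(x))<\infty$; in particular $f(y)\neq 0$, so $y$ is transcendental over $K$. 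The field isomorphism $K(x)\to K(y)$ over $K$ sending $x$ to $y$ therefore exists, and since $v(f(y))=w(f(x))$ for every $f\in K[X]$, it preserves the value of every polynomial, hence of every rational function, so it is valuation preserving.

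I expect the only genuinely delicate point to be the residue-field part of immediateness: one must control the ``second-order'' differences $f(a_\nu)-f(a_\mu)$ and show that the error terms in $f(a_\nu)/g(a_\nu)-f(a_\mu)/g(a_\mu)$ are dominated by $\gamma_\mu$, and it is precisely here (via the finiteness and eventual constancy of all the $vf_i(a_\nu)$) that the transcendental-type hypothesis is used essentially. The verification that $w$ is a valuation, the transcendence of $y$, and the valuation-preserving property of the isomorphism are then comparatively routine bookkeeping with the expansion $f(X+Y)=\sum_i f_i(X)Y^i$.
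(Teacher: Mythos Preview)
The paper does not give its own proof of this theorem; it is simply quoted from Kaplansky \cite{[Ka]} as a known input, so there is no ``paper's proof'' to compare against. Your argument follows the standard Kaplansky proof and is correct in outline, but there is one genuine gap.

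You twice assert that the values $\gamma_\nu=v(a_{\nu+1}-a_\nu)$ are cofinal in the value group, and you use this to conclude that the higher-order Taylor terms $vf_i(a_\nu)+i\gamma_\nu$ ``tend to infinity'' (in the rigidity step), and similarly that $\gamma_\mu+c_f$ becomes large (in the residue-field step). This is false in general: a pseudo Cauchy sequence can have its $\gamma_\nu$ bounded above in $vK$ (think of a valuation of rank $>1$ and a sequence that is Cauchy only with respect to a proper coarsening). So neither the proof that $vf(y)=w(f(x))$ nor the proof that $K(x)v=Kv$ goes through as you have written it.

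The repair is short and is exactly Kaplansky's device. Writing $\beta_i$ for the eventual value of $vf_i(a_\nu)$, one observes that among the finitely many linear functions $\gamma\mapsto\beta_i+i\gamma$ ($i\ge 1$) the minimum is attained at a unique index for all but finitely many values of $\gamma$; since the $\gamma_\mu$ are strictly increasing, there is a fixed $j\ge 1$ with
\[
v\bigl(f(a_\nu)-f(a_\mu)\bigr)\;=\;\min_{i\ge 1}\bigl(\beta_i+i\gamma_\mu\bigr)\;=\;\beta_j+j\gamma_\mu
\]
for all large $\mu$ and all $\nu>\mu$. Now $vf(a_\nu)$ is eventually the constant $\alpha$, so $\beta_j+j\gamma_\mu\ge\alpha$; since $\beta_j+j\gamma_\mu$ is strictly increasing in $\mu$, it is in fact $>\alpha$ for all sufficiently large $\mu$. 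Replacing $a_\nu$ by the pseudo limit $y$ (both satisfy $v(\,\cdot\,-a_\mu)=\gamma_\mu$) gives $v(f(y)-f(a_\mu))\ge\min_{i\ge 1}(\beta_i+i\gamma_\mu)>\alpha$, hence $vf(y)=\alpha=w(f(x))$, and your rigidity argument then concludes as written. The same inequality $v(f(a_\nu)-f(a_\mu))>\alpha$ for large $\mu$ is precisely what is needed in your residue-field computation, and it is now available without any cofinality hypothesis on the $\gamma_\mu$.
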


\begin{theorem} {\rm\ \ \ (Theorem 3 of \cite{[Ka]})}\label{KT3}
\n
Take a pseudo Cauchy sequence $(a_\nu)_{\nu<\lambda}$ in $(K,v)$ of
algebraic type. Choose a polynomial $f(X)\in K[X]$ of minimal degree
whose value is not fixed by $(a_\nu)_{\nu<\lambda}\,$, and a root $z$ of
$f$. Then there exists an extension of $v$ from $K$ to $K(z)$ such that
$(K(z)|K,v)$ is an immediate extension and $z$ is a pseudo limit of
$(a_\nu)_{\nu<\lambda}\,$.

If $(K(z'),v)$ is another valued extension field of $(K,v)$ such that
$z'$ is also a root of $f$ and a pseudo limit of
$(a_\nu)_{\nu<\lambda}\,$, then the field isomorphism between $K(a)$ and
$K(b)$ over $K$ sending $a$ to $b$ will preserve the valuation.
\end{theorem}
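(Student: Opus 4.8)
The plan is to build one explicit extension of $v$ to $K(z)$ by transporting the ``eventual value'' along the sequence, and then to read off immediacy and uniqueness directly from the minimality of $f$. Write $\gamma_\nu := v(a_{\nu+1}-a_\nu)$; for $\nu$ large these are strictly increasing and $v(a_\rho-a_\nu)=\gamma_\nu$ for every $\rho>\nu$, so the sequence is eventually injective. I would first record that $f$ may be taken monic and is irreducible: if $f=f_1f_2$ with $\deg f_i<\deg f$, then $v(f(a_\nu))=v(f_1(a_\nu))+v(f_2(a_\nu))$ and each summand is eventually constant by minimality of $\deg f$, so the value of $f$ would be fixed, contrary to hypothesis. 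Put $d:=\deg f$. If $d=1$, say $f=X-a$ with $a\in K$, then ``value not fixed'' forces $v(a-a_\nu)=\gamma_\nu$ eventually, so $a$ is already a pseudo limit in $K$ and the statement is trivial; so assume $d\geq 2$. For $g\in K[X]$ with $\deg g<d$ the value $v(g(a_\nu))$ is eventually constant (by the choice of $f$), and this constant, call it $\widetilde v(g)$, is finite when $g\neq 0$, since $g$ has only finitely many roots and $(a_\nu)$ is eventually injective.

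Since $f$ is irreducible, $K(z)=K[z]\cong K[X]/(f)$ and every element is uniquely $g(z)$ with $\deg g<d$; define $w(g(z)):=\widetilde v(g)$ and $w(0):=\infty$. Then $w(\alpha)=\infty$ iff $\alpha=0$, and $w$ restricts to $v$ on $K$. The ultrametric inequality is immediate, since $\deg(g+h)<d$ and $v((g+h)(a_\nu))\geq\min(v(g(a_\nu)),v(h(a_\nu)))$. For multiplicativity, write $gh=qf+r$ with $\deg r<d$, so $\deg q<d$ as well, and $r(a_\nu)=g(a_\nu)h(a_\nu)-q(a_\nu)f(a_\nu)$. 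Here $v(g(a_\nu)h(a_\nu))=\widetilde v(g)+\widetilde v(h)$ eventually, $v(q(a_\nu))$ is eventually constant, and $v(f(a_\nu))$ is eventually strictly increasing (being eventually monotone and, by hypothesis, not eventually constant). I claim $v(q(a_\nu)f(a_\nu))>\widetilde v(g)+\widetilde v(h)$ eventually: otherwise $v(q(a_\nu))+v(f(a_\nu))\leq\widetilde v(g)+\widetilde v(h)$ for arbitrarily large $\nu$, hence, being eventually strictly increasing, $<\widetilde v(g)+\widetilde v(h)$ for all large $\nu$, and then $v(r(a_\nu))=v(q(a_\nu))+v(f(a_\nu))$ would be eventually strictly increasing, contradicting that $v(r(a_\nu))$ is eventually constant. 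Granting the claim, $v(r(a_\nu))=\widetilde v(g)+\widetilde v(h)$ eventually, i.e.\ $w(g(z)h(z))=w(g(z))+w(h(z))$, so $w$ is a valuation on $K(z)$ extending $v|_K$.

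Next I would check that $z$ is a pseudo limit and that $(K(z)|K,w)$ is immediate. For $\nu$ large, $w(z-a_\nu)=\widetilde v(X-a_\nu)$ is the eventual value of $v(a_\rho-a_\nu)$, namely $\gamma_\nu$; as $\gamma_\nu$ is eventually strictly increasing, $z$ is a pseudo limit of $(a_\nu)$. Every nonzero element of $K(z)$ has value $\widetilde v(g)\in vK$, so $wK(z)=vK$. For the residue field, take $\alpha=g(z)$ with $w(\alpha)=0$; we may assume $g$ nonconstant. By the standard fact that $(g(a_\nu))$ is again a pseudo Cauchy sequence in $K$ (see \cite{[Ka]}), its values $\delta_\nu:=v(g(a_{\nu+1})-g(a_\nu))$ are eventually strictly increasing, and since $v(g(a_\nu))=0$ eventually one gets $\delta_\nu>0$ eventually. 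Then for $\nu$ large, $w(g(z)-g(a_\nu))$ is the eventual value of $v(g(a_\rho)-g(a_\nu))$, namely $\delta_\nu>0$, so $g(z)$ and $g(a_\nu)\in K$ have the same residue, which lies in $Kv$. Hence $K(z)v=Kv$. For uniqueness, let $(K(z'),v')$ be another valued extension of $(K,v)$ with $z'$ a root of $f$ and a pseudo limit of $(a_\nu)$; then $v'|_K=v$ and $\phi\colon K(z)\to K(z')$ fixing $K$ and sending $z$ to $z'$ is a field isomorphism. Since $z'$ is a pseudo limit of $(a_\nu)$, the analogous standard fact (see \cite{[Ka]}) gives that $g(z')$ is a pseudo limit of $(g(a_\nu))$, so $v'(g(z')-g(a_\nu))$ exceeds $\widetilde v(g)$ eventually; as $v'(g(a_\nu))=v(g(a_\nu))=\widetilde v(g)$ eventually, this forces $v'(g(z'))=\widetilde v(g)=w(g(z))$, so $\phi$ preserves the valuation.

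The main obstacle is the multiplicativity of $w$ in the second step: this is the one place where the minimality of $\deg f$ enters essentially, through the fact that $v(f(a_\nu))$ is \emph{not} eventually constant while $v(g(a_\nu))$ is, for every $g$ of smaller degree. The remaining steps are routine once the basic properties of pseudo Cauchy sequences --- eventual monotonicity of $v(g(a_\nu))$, and the fact that $(g(a_\nu))$ is again a pseudo Cauchy sequence admitting $g(z)$ (resp.\ $g(z')$) as pseudo limit --- are imported from \cite{[Ka]}.
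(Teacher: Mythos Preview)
The paper does not give its own proof of this statement: it is quoted verbatim as Theorem~3 of \cite{[Ka]} and used as a black box, so there is nothing in the paper to compare your argument against. Your write-up is, in fact, a faithful reconstruction of Kaplansky's original proof: define $w$ on $K[X]/(f)$ by the eventually constant value $\widetilde v(g)=v(g(a_\nu))$ for $\deg g<d$, verify multiplicativity via division with remainder and the fact that $v(f(a_\nu))$ is eventually strictly increasing while $v(q(a_\nu))$ and $v(r(a_\nu))$ are eventually constant, then read off $wK(z)=vK$, $K(z)w=Kv$, and the pseudo-limit property from the definition. The uniqueness clause likewise follows Kaplansky by reducing to $v'(g(z'))=\widetilde v(g)$ through the pseudo-Cauchy behaviour of $(g(a_\nu))$. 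Your identification of multiplicativity as the crux is exactly right, and the contradiction argument you give (strictly increasing versus eventually constant) is the standard one; the remaining steps are, as you say, routine consequences of Kaplansky's lemmas on $v(g(a_\nu))$ and on $(g(a_\nu))$ being again pseudo-Cauchy.
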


We will need a few more results that are not in Kaplansky's paper.

\begin{lemma}
Take an algebraic algebraic field extension $(K(a)|K,v)$, where $a$ is
a pseudo limit of a pseudo Cauchy sequence $(a_{\nu})_{\nu<\lambda}$ in
$(K,v)$ without a pseudo limit in $K$. Then $(a_{\nu})_{\nu<\lambda}$
does not fix the value of the minimal polynomial of $a$ over $K$.
\end{lemma}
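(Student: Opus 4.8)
The plan is to work with the Taylor expansion of the minimal polynomial around $a$ and to follow the values of the resulting monomials along the sequence. First I would let $f\in K[X]$ be the minimal polynomial of $a$ over $K$, say of degree $n$, and expand $f(X+a)=\sum_{i=0}^{n}c_iX^i$ with $c_i\in K(a)$. Here $c_0=f(a)=0$, since $f$ is the minimal polynomial, and $c_n\neq 0$ is the leading coefficient of $f$; hence the set $S:=\{\,i\mid 1\le i\le n,\ c_i\neq 0\,\}$ contains $n$ and is nonempty. (Note that $c_1=f'(a)$ may vanish when $a$ is inseparable over $K$, but this is immaterial.) Substituting $X=a_\nu-a$ yields the identity $f(a_\nu)=\sum_{i\in S}c_i(a_\nu-a)^i$, and the argument reduces to estimating the right-hand side.

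Next I would invoke the standard facts about pseudo Cauchy sequences. Writing $\gamma_\nu:=v(a_{\nu+1}-a_\nu)$, the pseudo Cauchy condition makes $(\gamma_\nu)_{\nu<\lambda}$ strictly increasing and gives $v(a_\rho-a_\nu)=\gamma_\nu$ whenever $\rho>\nu$; since $a$ is a pseudo limit of $(a_\nu)_{\nu<\lambda}$, one also gets $v(a-a_\nu)=\gamma_\nu$ for every $\nu<\lambda$. Consequently, for each $i\in S$ the $i$-th monomial has value $g_i(\nu):=v\bigl(c_i(a_\nu-a)^i\bigr)=vc_i+i\gamma_\nu$, which, since $i\ge 1$, is a strictly increasing function of $\nu$. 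Hence $h(\nu):=\min_{i\in S}g_i(\nu)$ is strictly increasing too: for $\nu<\rho$, picking $i_0\in S$ with $g_{i_0}(\rho)=h(\rho)$ gives $h(\nu)\le g_{i_0}(\nu)<g_{i_0}(\rho)=h(\rho)$. By the ultrametric inequality, $vf(a_\nu)\ge h(\nu)$ for all $\nu$.

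The point that needs the most care --- and which I expect to be the only real obstacle --- is upgrading this to the equality $vf(a_\nu)=h(\nu)$ for large $\nu$, i.e.\ ruling out persistent cancellation among the monomials of least value. For distinct $i,j\in S$, the coincidence $g_i(\nu)=g_j(\nu)$ is the equation $(i-j)\gamma_\nu=vc_j-vc_i$; as $(\gamma_\nu)$ is strictly increasing, hence injective, this has at most one solution $\nu$, so --- $S$ being finite --- there are only finitely many ordinals $\nu$ at which the minimum defining $h(\nu)$ is attained at more than one index. Choosing $\nu_0<\lambda$ above all of them, we get for $\nu\ge\nu_0$ that the minimum in $h(\nu)$ is attained at a single index, whence the ultrametric inequality becomes an equality and $vf(a_\nu)=h(\nu)$. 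Therefore $vf(a_\nu)$ is strictly increasing for $\nu\ge\nu_0$; in particular it is not eventually constant, which is exactly the assertion that $(a_\nu)_{\nu<\lambda}$ does not fix the value of the minimal polynomial of $a$ over $K$.
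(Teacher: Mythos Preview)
Your proof is correct. It is essentially Kaplansky's original argument (Lemma~8 of \cite{[Ka]}): expand $f$ around the pseudo limit $a$, so that $f(a_\nu)=\sum_{i\in S}c_i(a_\nu-a)^i$ with $c_0=0$, observe that each surviving monomial has value $vc_i+i\gamma_\nu$ which is strictly increasing in $\nu$, and then use injectivity of $\nu\mapsto\gamma_\nu$ to rule out ties for all large $\nu$, so that the ultrametric inequality becomes an equality. This is a clean and self-contained argument inside $K(a)$.

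The paper takes a different route: it factors $f(X)=\prod_i(X-\sigma_i a)$ over the algebraic closure and analyzes each linear factor separately. For each conjugate $\sigma_i a$, one compares $v(a-\sigma_i a)$ with the increasing values $v(a_\nu-a)$; the term $v(a_\nu-\sigma_i a)$ is then either eventually equal to $v(a_\nu-a)$ (hence strictly increasing) or eventually constant, and since the factor $(X-a)$ itself always contributes a strictly increasing summand, the total $vf(a_\nu)=\sum_i v(a_\nu-\sigma_i a)$ is ultimately increasing. The paper's approach is thus more ``root-by-root'' and needs an extension of $v$ to $\tilde K$, whereas yours is ``monomial-by-monomial'' and stays in $K(a)$; in return, the paper's version avoids the tie-breaking step entirely. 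Both arguments, incidentally, make no use of the hypothesis that $(a_\nu)$ has no pseudo limit in $K$.
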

\begin{proof}
We denote the minimal polynomial of $a$ over $K$ by
$f(X)=\prod_{i=1}^n (X-\sigma_i a)$ with $\sigma_i\in \Gal(\tilde{K}
|K)$. Since $a$ is a pseudo limit of $(a_{\nu})_{\nu<\lambda}$, the
values $v(a_{\nu}-a)$ are ultimately increasing. If $v(a-\sigma_i a)>
v(a_{\nu}-a)$ for all $\nu<\lambda$, then also the values
$v(a_{\nu}-\sigma_i a)=\min\{v(a_{\nu}-a), v(a-\sigma_i
a)\}=v(a_{\nu}-a)$ are ultimately increasing. If on the other hand,
$v(a-\sigma_i a)\leq v(a_{\nu_0}-a)$ for some $\nu_0<\lambda$, then for
$\nu_0<\nu<\lambda$, the value $v(a_{\nu}-\sigma_i a)=\min \{v(a_{\nu}-a),
v(a-\sigma_i a)\}=v(a-\sigma_i a)$ is fixed. We conclude that the values
$vf(a_{\nu})=\sum_{i=1}^n v(a_{\nu}-\sigma_i a)$ are ultimately
increasing.
\end{proof}

\begin{lemma}                                    \label{pimm}
Take a henselian field $(K,v)$ of positive characteristic $p$ and a
pseudo Cauchy sequence $(a_{\nu})_{\nu<\lambda}$ in $(K,v)$ without a
pseudo limit in $K$. If $(K(a)|K,v)$ is a valued field extension of
degree $p$ such that $a$ is a pseudo limit of
$(a_{\nu})_{\nu<\lambda}\,$, then $(K(a)|K,v)$ is immediate.
\end{lemma}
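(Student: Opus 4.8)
The goal is to show that if $(K,v)$ is henselian of characteristic $p$, $(a_\nu)_{\nu<\lambda}$ is a pseudo Cauchy sequence in $(K,v)$ with no pseudo limit in $K$, and $a$ is a pseudo limit of degree $p$ over $K$, then $K(a)|K$ is immediate. The plan is to distinguish the two types of pseudo Cauchy sequences. If $(a_\nu)_{\nu<\lambda}$ is of transcendental type, then $a$ being algebraic over $K$ contradicts the uniqueness part of Theorem~\ref{KT2} (the pseudo limit of a transcendental-type sequence is necessarily transcendental), so this case cannot occur. Hence $(a_\nu)_{\nu<\lambda}$ must be of algebraic type. Let $f(X)\in K[X]$ be a polynomial of minimal degree whose value is not fixed by $(a_\nu)_{\nu<\lambda}$; by minimality, $\deg f$ is the smallest degree of any such polynomial. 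By Theorem~\ref{KT3}, there is an extension of $v$ to $K(z)$, where $z$ is a root of $f$, making $(K(z)|K,v)$ immediate with $z$ a pseudo limit of $(a_\nu)_{\nu<\lambda}$.

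Next I would pin down $\deg f = p$. On the one hand, $\deg f \geq 2$: a degree-one polynomial $X-c$ has value $v(a_\nu - c)$, and if this were not fixed, $c$ would be a pseudo limit of $(a_\nu)$ in $K$, contrary to hypothesis. On the other hand, the preceding lemma in the excerpt (the one immediately before Lemma~\ref{pimm}) shows that the minimal polynomial of $a$ over $K$ has value not fixed by $(a_\nu)_{\nu<\lambda}$; since $a$ has degree $p$ over $K$, this minimal polynomial has degree $p$, so $\deg f \leq p$. Now here is where henselianity and the degree-$p$ hypothesis combine: I claim $f$ is irreducible over $K$ and $\deg f = p$. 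Because $(K,v)$ is henselian, the valuation extends uniquely to $\tilde K$, so any root $z$ of $f$ generates an extension $(K(z)|K,v)$ with a unique valuation, and by the Lemma of Ostrowski $[K(z):K]$ equals $p^{\nu}(vK(z):vK)[K(z)v:Kv]$. Since $(K(z)|K,v)$ is immediate, $(vK(z):vK) = [K(z)v:Kv] = 1$, forcing $[K(z):K] = p^\nu$. As $z$ is a root of $f$ and $2 \leq \deg f \leq p$, we get $[K(z):K] \leq p$, so $[K(z):K] = p$ and $f$ is the minimal polynomial of $z$ over $K$, of degree exactly $p$.

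It remains to transfer this from $z$ to $a$. Both $z$ and $a$ are roots of $f$ (since $f$ has degree $p$, which is now known to equal the degree of the minimal polynomial of $a$, and $f(a)$'s value being unfixed forces $f$ to be, up to a unit, that minimal polynomial — alternatively, $f$ is irreducible of degree $p = [K(a):K]$ and $f(a)$ must vanish because its value cannot be simultaneously unfixed and finite-and-constant, so $a$ is a root of the irreducible $f$). Both are pseudo limits of $(a_\nu)_{\nu<\lambda}$. By the uniqueness clause of Theorem~\ref{KT3}, the $K$-isomorphism $K(z)\to K(a)$ sending $z\mapsto a$ is valuation preserving; since $(K(z)|K,v)$ is immediate, so is $(K(a)|K,v)$. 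The main obstacle I anticipate is the bookkeeping in the middle step: carefully justifying that the minimal-degree polynomial $f$ really is (associate to) the minimal polynomial of $a$, i.e.\ that $a$ is a root of $f$ — this needs the combination of the preceding lemma (minimal polynomial of $a$ has unfixed value, hence $\deg f \leq p$), the elementary degree-one argument ($\deg f \geq 2$), and the Ostrowski/henselian argument pinning $[K(z):K]=p$, after which irreducibility of $f$ and the degree count force $f(a)=0$. Once $a$ is identified as a root of $f$, Theorem~\ref{KT3}'s uniqueness statement closes the argument immediately.
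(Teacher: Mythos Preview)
Your overall strategy matches the paper's: use the preceding lemma to see that the minimal polynomial of $a$ has value not fixed by $(a_\nu)$, use the no-pseudo-limit hypothesis to rule out degree $1$, and use henselianity together with the Lemma of Ostrowski to force the minimal degree of an unfixed polynomial to be exactly $p$. The difference is in the order of operations, and your ordering creates a real gap at the end.

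You begin with an \emph{arbitrary} minimal-degree polynomial $f$ whose value is not fixed, and after establishing $\deg f=p$ you try to argue that $a$ is a root of $f$. This does not follow. There can be many irreducible polynomials of degree $p$ whose value is not fixed by $(a_\nu)$, and $a$ need be a root of only one of them (its own minimal polynomial). Your justifications for $f(a)=0$ --- that ``$f(a)$'s value is unfixed'' or that ``its value cannot be simultaneously unfixed and finite-and-constant'' --- conflate the behaviour of $vf(a_\nu)$ along the sequence with the value of the single element $f(a)$; the latter is just some element of $vK(a)\cup\{\infty\}$ and there is no contradiction in $f(a)\ne 0$.

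The fix is a one-line reorganization, and it is exactly what the paper does: rather than starting from an arbitrary $f$ and trying to drag $a$ into its root set, start from the minimal polynomial of $a$ (call it $h$, degree $p$), which by the preceding lemma has value not fixed. Your Ostrowski argument, applied to a root $b$ of an arbitrary minimal-degree unfixed polynomial $g$, shows $\deg g\ge p$; hence $h$ itself is of minimal degree. Now Theorem~\ref{KT3} applies directly with the root $a$ of $h$, producing \emph{some} extension $w$ of $v$ to $K(a)$ for which $(K(a)|K,w)$ is immediate. Since $(K,v)$ is henselian, $w$ must coincide with the given valuation on $K(a)$, and you are done --- no need for the uniqueness clause of Theorem~\ref{KT3} at all.
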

\begin{proof}
By the previous lemma, $(a_{\nu})_{\nu<\lambda}$ does not fix the value of
the minimal polynomial $f$ of $a$ over $K$. On the other hand, we will
show that $(a_{\nu})_{\nu<\lambda}$ fixes the value of every polynomial of
degree less than $\deg f=p$. We take $g\in K[X]$ to be a polynomial of
smallest degree such that $(a_{\nu})_{\nu<\lambda}$ does not fix the
value of $g$. Since $(a_{\nu})_{\nu<\lambda}$ admits no pseudo limit in
$(K,v)$, the polynomial $g$ is of degree at least 2. Take a root $b$ of
$g$. By Theorem~\ref{KT3}, there is an extension of the valuation $v$
from $K$ to $K(b)$ such that $(K(b)|K,v)$ is immediate. Since $[K(b):K]
\geq 2$ and $(K,v)$ is henselian, the Lemma of Ostrowski implies that
$[K(b):K]\geq p$. This shows that $f$ is a polynomial of smallest degree
whose value is not fixed by $(a_{\nu})_{\nu<\lambda}$. Hence again by
Theorem~\ref{KT3}, there is an extension of the valuation $v$ from $K$
to $K(a)$ such that $(K(a)|K,v)$ is immediate. Since $(K,v)$ is
henselian, this extension coincides with the given valuation on $K(a)$
and we have thus proved that the extension $(K(a)|K,v)$ is immediate.
\end{proof}

The following result is Proposition 4.3 of \cite{[Ku6]}:
\begin{proposition}                                \label{deformthm}
Take a valued field $(F,v)$ of positive characteristic $p$. Assume
that $F$ admits an immediate purely inseparable extension $F(\eta)$ of
degree $p$ such that the element $\eta$ does not lie in the completion
of $(F,v)$. Then for each element $b\in F^{\times}$ such that
\begin{equation}                                   \label{dist}
(p-1)vb+v\eta\> >\> pv(\eta-c)
\end{equation}
holds for every $c\in F$, any root $\vartheta$ of the polynomial
\[
X^p-X-\left(\frac{\eta}{b}\right)^p
\]
generates an immediate Galois extension $(F(\vartheta)|F,v)$ of degree
$p$ with a unique extension of the valuation $v$ from $F$ to
$F(\vartheta)$.
\end{proposition}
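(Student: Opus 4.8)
The plan is to prove the three assertions of the proposition — that $F(\vartheta)|F$ has degree $p$ (it is then automatically Galois, being an Artin--Schreier extension), that it is immediate, and that $v$ extends uniquely — in that order, the first two being where the substance lies. Write $a:=\eta^p\in F$, so that the polynomial in question is the Artin--Schreier polynomial $X^p-X-a/b^p$ over $F$, which is either irreducible over $F$ or has a root $s\in F$. I would rule out the latter: if $s^p-s=a/b^p$ then $a=(bs)^p-b^ps$, and plugging $c=0$ into the hypothesis first gives $vb>v\eta$, whence $v(a/b^p)=p(v\eta-vb)<0$ and therefore $vs=v\eta-vb$. Then, since $\chara F=p$,
\[
(\eta-bs)^p\>=\>\eta^p-(bs)^p\>=\>a-(bs)^p\>=\>-b^ps ,
\]
so $p\,v(\eta-bs)=p\,vb+vs=(p-1)vb+v\eta$. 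As $bs\in F$, the hypothesis applied with $c=bs$ now asserts $(p-1)vb+v\eta>p\,v(\eta-bs)=(p-1)vb+v\eta$, which is absurd. Hence the polynomial is irreducible and $(F(\vartheta)|F,v)$ is Galois of degree $p$. I expect this device — feeding a cleverly chosen element of $F$ into the hypothesis to obtain a self-contradiction — to recur; the displayed inequality is exactly calibrated to make it work.

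Next I would realize $\vartheta$ as a pseudo limit of a pseudo Cauchy sequence coming from $\eta$. Since $(F(\eta)|F,v)$ is immediate and $\eta\notin F$, the set $\{v(\eta-c):c\in F\}$ has no largest element, so one may choose a pseudo Cauchy sequence $(a_\nu)_{\nu<\lambda}$ in $F$ with $\gamma_\nu:=v(\eta-a_\nu)$ strictly increasing and cofinal in that set; then $\eta$ is a pseudo limit of $(a_\nu)$, the sequence has no pseudo limit in $F$, and eventually $\gamma_\nu>v\eta$, so $va_\nu=v\eta$. The scaled sequence $(a_\nu/b)$ again lies in $F$, is pseudo Cauchy, and has no pseudo limit in $F$. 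Now for \emph{any} extension $w$ of $v$ to $F(\vartheta)$, put $\vartheta_\nu:=\vartheta-a_\nu/b$; a direct computation using $\vartheta^p-\vartheta=a/b^p$ and $\chara F=p$ gives
\[
\vartheta_\nu^{\,p}-\vartheta_\nu\>=\>\frac{a-a_\nu^{\,p}}{b^p}+\frac{a_\nu}{b}\>=\>\frac{(\eta-a_\nu)^p}{b^p}+\frac{a_\nu}{b}\ \in\ F ,
\]
whose two summands have values $p(\gamma_\nu-vb)$ and $v\eta-vb$; the hypothesis with $c=a_\nu$ gives $p\gamma_\nu<(p-1)vb+v\eta$, i.e.\ $p(\gamma_\nu-vb)<v\eta-vb<0$, so $w(\vartheta_\nu^{\,p}-\vartheta_\nu)=p(\gamma_\nu-vb)<0$. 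This forces $w\vartheta_\nu<0$, hence $w(\vartheta-a_\nu/b)=\tfrac1p\,w(\vartheta_\nu^{\,p}-\vartheta_\nu)=\gamma_\nu-vb$, which is eventually strictly increasing. Thus $\vartheta$ is a pseudo limit of $(a_\nu/b)$ — and, crucially, this held for every extension $w$.

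To finish, when $(F,v)$ is henselian the extension $w$ is automatically unique, and Lemma~\ref{pimm}, applied to the degree-$p$ extension $(F(\vartheta)|F,v)$ and the pseudo Cauchy sequence $(a_\nu/b)$, yields immediacy at once. In general I would pass to a henselization $F^h$ of $(F,v)$: one checks that $F^h(\vartheta)|F^h$ is again Galois of degree $p$, that $F^h(\eta)|F^h$ is again immediate and purely inseparable of degree $p$, that $\widehat{F^h}=\hat F$ (the completion being henselian, $F^h$ embeds into $\hat F$ over $F$) so $\eta\notin\widehat{F^h}$, and that the distance inequality is inherited because $\{v(\eta-c):c\in F^h\}=\{v(\eta-c):c\in\hat F\}=\{v(\eta-c):c\in F\}$; the henselian case then shows $(F^h(\vartheta)|F^h,w)$ is immediate, and this descends to $(F(\vartheta)|F,w)$. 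For uniqueness in general, I would first note $\vartheta\notin\hat F$ — the same self-contradiction as above, applied to a hypothetical sequence $c_n\to\vartheta$ in $F$, yields $p\,v(\eta-bc_n)=(p-1)vb+v\eta$ for large $n$, against the hypothesis — and then observe that, since every extension of $v$ to $F(\vartheta)$ is immediate and $F(\vartheta)|F$ is normal, the number of extensions times their common defect equals $p$; the case of $p$ extensions of defect $1$ is impossible, as it would embed $F(\vartheta)$ into $\hat F$, which contains no root of $X^p-X-a/b^p$. So there is one extension, of defect $p$.

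The main obstacle, I expect, is organizational rather than conceptual: once one knows to look at $(\vartheta-a_\nu/b)^p-(\vartheta-a_\nu/b)$ and at the scaled sequence $(a_\nu/b)$, the value computations are short, and the only delicate point in the non-henselian reduction is the bookkeeping needed to see that the hypotheses — above all the distance inequality — are inherited by the henselization. Over a henselian base the whole argument collapses to the degree computation together with Lemma~\ref{pimm}.
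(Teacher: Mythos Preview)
The paper does not give a proof of this proposition; it is quoted verbatim as Proposition~4.3 of \cite{[Ku6]}. So there is no ``paper's own proof'' to compare against, and your attempt has to be judged on its own merits.

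Your Steps 1--3 are correct and contain the essential ideas. The irreducibility argument is clean, the identification of $\vartheta$ as a pseudo limit of $(a_\nu/b)$ under \emph{any} extension $w$ is the right key computation, and in the henselian case Lemma~\ref{pimm} finishes immediately. Likewise your argument that $\vartheta\notin\hat F$ is sound.

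The gap is in the reduction to the henselian case. You assert that the completion $\hat F$ is henselian, hence $F^h\hookrightarrow\hat F$, hence $\widehat{F^h}=\hat F$ and $\{v(\eta-c):c\in F^h\}=\{v(\eta-c):c\in F\}$. But the completion of a valued field is henselian only when the valuation has rank~$1$; in higher rank this fails, and the proposition is stated for arbitrary valued fields. This breaks two things simultaneously:
\begin{itemize}
\item the inheritance of the distance inequality by $F^h$, which you need to re-run the argument over $F^h$ and conclude immediacy;
\item the uniqueness argument, since ``$p$ extensions'' is equivalent to $\vartheta\in F^h$, not to $\vartheta\in\hat F$, and you have only excluded the latter.
\end{itemize}
Both issues reduce to the same question: can some $c\in F^h$ approximate $\eta$ strictly better than every $c'\in F$? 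You have not ruled this out; in fact any such $c$ would be a second pseudo limit of your sequence $(a_\nu)$, sitting in the separable immediate extension $F^h|F$, while $\eta$ sits in a purely inseparable one --- and nothing you have written so far prevents that. A correct treatment has to show directly that $\sup\{v(\eta-c):c\in F^h\}=\sup\{v(\eta-c):c\in F\}$ (this is where the interplay between the \emph{separability} of $F^h|F$ and the \emph{pure inseparability} of $F(\eta)|F$ must be used), or else bypass the henselization entirely by proving that $(a_\nu/b)$ fixes the value of every polynomial of degree $<p$, which would give both uniqueness and immediacy at once via Kaplansky's Theorems~2 and~3. As written, the non-henselian case is incomplete.
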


%
%
\subsection{Characteristic blind Taylor expansion}
We need a Taylor expansion that works in all characteristics. For
polynomials $f\in K[X]$, we define the \bfind{$i$-th formal derivative
of $f$} as
\begin{equation}                            \label{iderivative}
f_i(X)\>:=\> \sum_{j=i}^{n}\binom{j}{i} c_j X^{j-i}
\>=\> \sum_{j=0}^{n-i}\binom{j+i}{i} c_{j+i} X^{j}\;.
\end{equation}
Then regardless of the characteristic of $K$, we have the
\bfind{Taylor expansion} of $f$ at $c$ in the following form:
\begin{equation}                          \label{Taylorexp}
f(X) \>=\> \sum_{i=0}^{n} f_i(c) (X-c)^i\;.
\end{equation}

%
%
%
\section{Algebraic independence of elements in maximal immediate
extensions}                                 \label{secttrmi}
This section is devoted to the proof of Theorem~\ref{MTai}.
Our first goal is a basic independence lemma.

Take $i\in\N$, any field $K$ and a polynomial $f\in K[X_1,\ldots,X_i]$.
With respect to the lexicographic order on $\Z^{i}$, let $(\mu_1,\ldots,
\mu_i)$ be maximal with the property that the coefficient of
$X_1^{\mu_1}\cdots X_i^{\mu_i}$ in $f$ is nonzero. Then define
$c_f$ to be this coefficient and call $(\mu_1,\ldots,\mu_i)$ the
\bfind{crucial exponent} of $f$.

\pars
For our basic independence lemma, we consider the following situation.
We choose a function
\[
\varphi:\;\N\times\N\;\longrightarrow\;\N
\]
such that
\[
\varphi(k,\ell)>\max\{k,\ell\}\;\mbox{\ \ and\ \ }\;\varphi(k+1,\ell)>
\varphi(k,\ell)\quad \mbox{for all $k,\ell\in\N$,}
\]
and for each $i\in\N$ a strictly increasing sequence $(E_i(k))_{k\in\N}$
of integers $\geq 2$ such that for all $k\geq 1$ and $i\geq 2$,
\begin{equation}                            \label{E}
\left.
\begin{array}{rcl}
E_1(k+1) &\geq& \varphi(k,E_1(k))+1,\\
E_i(k+1) &\geq& E_{i-1}(\varphi(k,E_i(k))+1)\;.
\end{array}
\right\}
\end{equation}
Then for $i,k\in\N$,
\begin{equation}                            \label{k+1k}
E_i(k)\> > \> k\;\mbox{\ \ and\ \ }\;
E_i(k+1)\> \geq \> \varphi(k,E_i(k))+1\> > \>E_i(k)+1\;.
\end{equation}

Further, we take an extension $(L|K,v)$ of valued fields, elements
\[
a_j\in L\;\mbox{\ \ and\ \ }\;\alpha_j\in vL\quad \mbox{for all
$j\in\N$,}
\]
and $K$-subspaces
\[
S_j\>\subseteq\>L\>, \quad j\in\N\>.
\]
We assume that for all $i,k,\ell\in\N$, the following conditions are
satisfied:
\sn
{\bf (A1)} \ $0\leq va_k\leq\alpha_k<va_{k+1}$ and $k\alpha_{E_i(k)}\leq
\alpha_{\varphi(k,E_i(k))}\,$,
\sn
{\bf (A2)} \ $a_1,\ldots,a_k\in S_k$ and $S_k\subseteq S_{k+1}\,$,
\sn
{\bf (A3)} \ if $d_0,\ldots,d_k\in S_k$ and $u\in S_{\ell}\,$, then
\[d_0+d_1u +\ldots+d_ku^k\>\in\> S_{\varphi(k,\ell)}\;,\]
\sn
{\bf (A4)} \ if $m\leq k$ and $d_0,\ldots,d_m\in S_k\,$, then
\[v(d_0+d_1a_{k+1}+\ldots+d_ma_{k+1}^m)\>\leq\>vd_m+m\alpha_{k+1}\;.\]

\pars
Now we choose any maximal immediate extension $(M,v)$ of $(L,v)$. For
each $i$, we take an arbitrary pseudo limit $y_i\in M$ of the pseudo
Cauchy sequence
\[
\left(\sum_{j=1}^{k} a_{E_i(j)}\right)_{k\in\N}\;.
\]
In this situation, we can prove the following basic independence lemma:

\begin{lemma}                               \label{btl}
Suppose that $k\geq 2$ is an integer and $f\in L[X_1,\ldots,X_i]$ is a
polynomial with coefficients in $S_{k-1}\cap {\cal O}_L$ such that
$\alpha_{E_i(k)}\geq vc_f$ and that $f$ has degree less than $k$ in each
variable. Then
\begin{equation}                            \label{ass}
vf(y_1,\ldots,y_i)\> < \> va_{E_i(k+1)}\>.
\end{equation}
\end{lemma}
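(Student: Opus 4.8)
The plan is to prove the inequality (\ref{ass}) by induction on $i$, using the recursive structure of the numbers $E_i(k)$ and the subspaces $S_k$. For the base case $i=1$, the element $y_1$ is a pseudo limit of the sums $\sum_{j\le k} a_{E_1(j)}$; so for a polynomial $f\in L[X_1]$ of degree $m<k$ with coefficients in $S_{k-1}\cap\mathcal{O}_L$, one writes $f(y_1)$ using the characteristic-blind Taylor expansion (\ref{Taylorexp}) of $f$ at the partial sum $c_k:=\sum_{j\le k}a_{E_1(j)}$, which lies in $S_{\varphi(k-1,E_1(k))}\subseteq S_{E_1(k+1)-1}$ by (A2), (A3) and (\ref{E}). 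One controls $v(y_1-c_k)$ from below: $y_1$ is a pseudo limit, and the ``tail'' $\sum_{j\ge k+1}a_{E_1(j)}$ has value $va_{E_1(k+1)}$ by (A1) (the values $va_k$ are strictly increasing), so $v(y_1-c_k)\geq va_{E_1(k+1)}$. The leading term $f(c_k)$ is then the dominant one: using (A4) with the role of ``$a_{k+1}$'' played by $a_{E_1(k)}$ (noting $c_k$ differs from $c_{k-1}$ by $a_{E_1(k)}$ and the coefficients sit in $S_{E_1(k)-1}\supseteq S_{k-1}$), together with the hypotheses $\alpha_{E_i(k)}\ge vc_f$ and $\deg f<k$, one bounds $vf(c_k)\le vc_f+ (k-1)\alpha_{E_1(k)}$, and then the key inequality $k\alpha_{E_1(k)}\le\alpha_{\varphi(k,E_1(k))}<va_{E_1(k+1)}$ from (A1) and (\ref{E}) shows $vf(c_k)<va_{E_1(k+1)}$, while the higher Taylor terms $f_j(c_k)(y_1-c_k)^j$ for $j\ge1$ have value $\ge v(y_1-c_k)\ge va_{E_1(k+1)}$ minus controlled corrections, hence are no smaller. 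Assembling these gives $vf(y_1)=vf(c_k)<va_{E_1(k+1)}$, which is exactly (\ref{ass}) for $i=1$.

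For the inductive step, suppose the lemma holds for $i-1$ and let $f\in L[X_1,\ldots,X_i]$ have degree $<k$ in each variable with coefficients in $S_{k-1}\cap\mathcal{O}_L$ and $\alpha_{E_i(k)}\ge vc_f$. Group $f$ as a polynomial in $X_i$: $f=\sum_{m} g_m(X_1,\ldots,X_{i-1})X_i^m$ with $m<k$, each $g_m\in S_{k-1}[X_1,\ldots,X_{i-1}]$ of degree $<k$ in each variable. Substitute $X_i=y_i$, the pseudo limit of $\big(\sum_{j\le n}a_{E_i(j)}\big)_n$. Writing $c:=\sum_{j\le n}a_{E_i(j)}$ for a suitable stage $n$ and Taylor-expanding in $X_i$ at $c$, one is led to evaluate polynomials $h(X_1,\ldots,X_{i-1})$ whose coefficients lie in $S_{n'}$ for an $n'$ governed by $\varphi$ and the $g_m$, at $(y_1,\ldots,y_{i-1})$; the recursion $E_i(k+1)\ge E_{i-1}(\varphi(k,E_i(k))+1)$ is precisely what guarantees that the relevant index for the inductive hypothesis on $i-1$ is available, i.e.\ the coefficients land in $S_{k'-1}$ and the value condition $\alpha_{E_{i-1}(k')}\ge v c_h$ holds with $k'=\varphi(k,E_i(k))+1$. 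Applying the inductive hypothesis bounds $vh(y_1,\ldots,y_{i-1})<va_{E_{i-1}(k'+1)}$, and then using (A1) for the index $i$ (the inequality $k\alpha_{E_i(k)}\le\alpha_{\varphi(k,E_i(k))}$ and $va$'s increasing) together with (A4) one checks the ``leading monomial in $X_i$'' term dominates and its value is $<va_{E_i(k+1)}$, exactly (\ref{ass}).

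The main obstacle, and the point needing the most care, is the bookkeeping of indices: one must verify at each stage that the subspace containing the coefficients after a substitution is early enough — i.e.\ contained in some $S_{k'-1}$ — and simultaneously that the value $vc_h$ of the new crucial coefficient is dominated by $\alpha_{E_{i-1}(k')}$ (or $\alpha_{E_i(k)}$ in the base case), so that the hypotheses of the lemma (or of the inductive hypothesis) genuinely apply. This is where conditions (A2), (A3) and the defining recursions (\ref{E}), (\ref{k+1k}) must be invoked in exactly the right order; the growth condition $\varphi(k,\ell)>\max\{k,\ell\}$ and monotonicity of $\varphi$ and of the $E_i$ are used repeatedly to absorb the unavoidable index inflation. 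The valuation estimates themselves are then routine applications of Lemma~\ref{vf(a)}-style bounds packaged into (A4), the ultrametric inequality, and the strict monotonicity $va_k<va_{k+1}$ from (A1); the real work is organizing the induction so that these estimates line up. A small additional point is that $y_i$ being a genuine pseudo limit (which exists in the maximal immediate extension $(M,v)$) gives $v(y_i-c)\ge va_{E_i(n+1)}$ for the partial sum $c$ at stage $n$, and one must choose $n$ as a function of $k$ (essentially $n=k$) so that this tail value is large enough to beat $vf(c)$; that choice is dictated by (A1).
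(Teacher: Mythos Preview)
Your approach is essentially that of the paper: induction on $i$, Taylor expansion of $f$ in the last variable at the partial sum $u=\sum_{j\le k}a_{E_i(j)}$, a second Taylor expansion $u=u'+a_{E_i(k)}$ to invoke (A4), and then the recursions (\ref{E}) to feed the resulting $(i-1)$-variable polynomial $g(X_1,\ldots,X_{i-1}):=f(X_1,\ldots,X_{i-1},u)$ into the induction hypothesis. The only substantive slip is an off-by-one in your choice of $k'$. You take $k'=\varphi(k,E_i(k))+1$, which after the induction hypothesis yields $vg(y_1,\ldots,y_{i-1})<va_{E_{i-1}(\varphi(k,E_i(k))+2)}$; but (\ref{E}) only gives $E_i(k+1)\ge E_{i-1}(\varphi(k,E_i(k))+1)$, so this bound does not close to $va_{E_i(k+1)}$. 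The paper takes $k'=\varphi(k,E_i(k))$: the coefficients of $g$ lie in $S_{\varphi(k-1,E_i(k))}\subseteq S_{\varphi(k,E_i(k))-1}=S_{k'-1}$ (using $\varphi(k+1,\ell)>\varphi(k,\ell)$), and $vc_g\le k\alpha_{E_i(k)}\le\alpha_{k'}<\alpha_{E_{i-1}(k')}$; then the induction gives $vg<va_{E_{i-1}(k'+1)}=va_{E_{i-1}(\varphi(k,E_i(k))+1)}\le va_{E_i(k+1)}$, which is exactly what is needed.

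Two minor points: in the base case the higher Taylor terms have value $\ge vz$ directly because $f_j(u)\in\mathcal{O}_L$ (no ``corrections'' are needed); and in the inductive step the induction hypothesis is applied to the \emph{single} polynomial $g=f(\,\cdot\,,u)$, whose crucial coefficient is $c_g=h(u)$ with $h$ the coefficient of $X_1^{\mu_1}\cdots X_{i-1}^{\mu_{i-1}}$ in $f$ --- it is not that the ``leading monomial in $X_i$'' is isolated and shown to dominate.
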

\begin{proof}
We shall prove the lemma by induction on $i$. We start with $i=1$ and
set
\[
u\>:=\>\sum_{j=1}^{k} a_{E_1(j)}
\quad\mbox{\ \ and\ \ }\quad z\>:=\>y_1-u\;.
\]
Then $u\in S_{E_1(k)}$ because of (A2) and the fact that the $S_k$ are
vector spaces. By (A1), the definition of $E_1$ and our assumption that
$\alpha_{E_1(k)}\geq vc_f\,$,
\begin{equation}                            \label{vz>alg}
vz\>=\>va_{E_1(k+1)}\>\geq\>va_{\varphi(k,E_1(k))+1}\>>\>
\alpha_{\varphi(k,E_1(k))}\>\geq\> k\alpha_{E_1(k)}
\>\geq\>vc_f+(k-1)\alpha_{E_1(k)}\>.
\end{equation}
We use the Taylor expansion
\begin{equation}                            \label{vz>alg1}
f(y_1)\>=\>f(u+z)\>=\>f(u)\,+\,zf_1(u)\,+\,z^2f_2(u)\,+\,\ldots
\end{equation}
where $f_j(X)\in {\cal O}_L[X]$ is the $j$-th formal derivative of $f$
as defined in (\ref{iderivative}). We have that $f_j(u)\in {\cal O}_L$
for all $j$. Hence,
\begin{equation}                            \label{vTay>alg}
v(zf_1(u)\,+\,z^2f_2(u)\,+\,\ldots)\>\geq\>vz\;.
\end{equation}
We wish to prove that $vf(u)<vz$. We set
\[
u'\>:=\>\sum_{j=1}^{k-1} a_{E_1(j)}\>\in\>S_{E_1(k-1)}
\]
so that $u=u'+a_{E_1(k)}$. We use the Taylor expansion
\[
f(u)\>=\>f(u'+a_{E_1(k)})\>=\>f(u')\,+\,f_1(u')a_{E_1(k)}\,+\ldots+\,
f_m(u')a_{E_1(k)}^m
\]
where $m=\deg f< k$. By definition, $c_f$ is the leading coefficient of
$f$, which in turn is equal to the constant $f_m(u')= f_m(X)\in L$.
Since $f$ has coefficients in the vector space $S_{k-1}\,$, we know from
(\ref{iderivative}) that also all $f_j$ have coefficients in
$S_{k-1}\,$. Thus, (A3) and (A2) show that
\[
f(u'),\,f_j(u')\>\in\> S_{\varphi(k-1,E_1(k-1))}\>\subseteq\> S_{E_1(k)-1}
\]
for each $j$. Further, $m\leq k-1\leq E_1(k)-1$. Hence by (A4) and
(\ref{vz>alg}),
\[
vf(u)\>\leq\> vf_m(u') + m\alpha_{E_1(k)}\>\leq\>
vc_f\,+\,(k-1)\alpha_{E_1(k)}\><\>vz\;.
\]
From this together with (\ref{vz>alg1}) and (\ref{vTay>alg}), we deduce
that
\[
vf(y_1)\>=\>vf(u)\><\> vz\;,
\]
which gives the assertion of our lemma for the case of $i=1$.

\parb
In the case of $i>1$ we assume that the assertion of our lemma has
been proven for $i-1$ in place of $i$, and we set
\[
u\>:=\>\sum_{j=1}^{k} a_{E_i(j)}\;\in\>S_{E_i(k)}\>,\quad
u'\>:=\>\sum_{j=1}^{k-1} a_{E_i(j)}\>\in\>S_{E_i(k-1)}\>,
\quad\mbox{and}\quad z\>:=\>y_i-u\;.
\]
Then by (\ref{k+1k}), (A1) and our assumption that $\alpha_{E_i(k)}\geq
vc_f\,$,
\[
vz\>=\>va_{E_i(k+1)}\>\geq\>va_{\varphi(k,E_i(k))+1}
\> > \>\alpha_{\varphi(k,E_i(k))}\>\geq\>
k\alpha_{E_i(k)} \> \geq \>vc_f+ (k-1)\alpha_{E_i(k)}\>.
\]
We use the Taylor expansion
\begin{eqnarray*}
f(y_1,\ldots,y_{i-1},u+z) & = & f(y_1,\ldots,y_{i-1},u)\,+\,
zf_1(y_1,\ldots,y_{i-1},u) \\
 & + & z^2f_2(y_1,\ldots,y_{i-1},u)\,+\,\ldots
\end{eqnarray*}
where $f_j\in {\cal O}_L[X_1,\ldots,X_i]$ is the $j$-th formal
derivative of $f$ with respect to $X_i\,$. We obtain the analogue of
inequality (\ref{vTay>alg}); hence it will suffice to prove that
\begin{equation}
vf(y_1,\ldots,y_{i-1},u)\><\>vz\;.
\end{equation}
We set
%
%
\[
g(X_1,\ldots,X_{i-1})\>:=f\left(X_1,\ldots,X_{i-1},u\right)
\]
%
%
so that $g(y_1,\ldots, y_{i-1})=f(y_1,\ldots,y_{i-1},u)$. Viewing
$f$ as a polynomial in the variables $X_1,\ldots, X_{i-1}$ with
coefficients in $L[X_i]$, we denote by $h(X_i)$ the coefficient of
$X_1^{\mu_1}\cdots X_{i-1}^{\mu_{i-1}}$ in $f$.
Note that $h$ has coefficients in $S_{k-1}$, its leading
coefficient is $c_f$ and its degree is $\mu_i<k$. Again, since $h$ has
coefficients in $S_{k-1}\,$, (\ref{iderivative}) shows that the same is
true for the $j$-th formal derivative $h_j$ of $h$, for all $j$. Thus,
(A3), (\ref{k+1k}) and (A2) imply that
\[
h(u'),\, h_j(u')\>\in\> S_{\varphi(k-1,E_i(k-1))}\>\subseteq\>
S_{E_i(k)-1}
\]
for each $j$. As in the first part of our proof we find that
\[
vh(u)\>\leq\> vh_{\mu_i}(u') + \mu_i\alpha_{E_i(k)}\>=\>
vc_f\,+\,\mu_i \alpha_{E_i(k)}
\]
since $h_{\mu_i}(u')=c_f\,$. In particular, this shows that $h(u)\ne 0$.
Hence if $(\mu_1, \ldots,\mu_i)$ is the crucial exponent of $f$, then
$(\mu_1,\ldots,\mu_{i-1})$ is the crucial exponent of $g$, and
\[
c_g\>=\>h(u)\;.
\]
We set
\[
k'\>:=\>\varphi(k,E_i(k))\> >\>\max\{k,E_i(k)\}\;.
\]
Since $\mu_i\leq k-1$ and $vc_f\leq\alpha_{E_i(k)}$ by assumption, and
by virtue of (A1) and (\ref{k+1k}), it follows that
\[
vc_g\>=\>vh(u)\>\leq\> vc_f+(k-1)\alpha_{E_i(k)} \>\leq\> k\alpha_{E_i(k)}
\>\leq\>\alpha_{k'}\><\>\alpha_{E_{i-1}(k')}\>.
\]
%
%
Since every coefficient of $g$ is of the form $h(u)$ with $h$ a
polynomial of degree less than $k$ and coefficients in $S_{k-1}\,$, we
know from (A3), our conditions on $\varphi$ and (A2) that the
coefficients of $g$ lie in
\[
S_{\varphi(k-1,E_i(k))}\>\subseteq\>S_{\varphi(k,E_i(k))-1}
\>=\>S_{k'-1}\;.
\]
Also, its degree in each variable is less than $k$, hence less than
$k'$. Therefore, we can apply the induction hypothesis to the case of
$i-1$, with $k'$ in place of $k$. We obtain, by (A1) and our choice of
the numbers $E_i(k)$:
\[
vf(y_1,\ldots,y_{i-1},u) \><\> va_{E_{i-1}(\varphi(k,E_i(k))+1)}
\>\leq\> va_{E_i(k+1)}\>=\>vz\;.
\]
This establishes our lemma.
\end{proof}

By (A2), 
\[
S_{\infty}\>:=\>\bigcup_{k\in\N}^{} S_k
\]
contains $a_k$ for all $k$. We set
\[
K_{\infty}\>:=\>K(S_{\infty})\;.
\]
Further, we note that condition (A1) implies that
\[
\Gamma\>:=\>\{\alpha\in vK_{\infty}\mid -va_k\leq\alpha\leq va_k
\mbox{ for some $k$}\}
\]
is a convex subgroup of $vK_{\infty}\,$.

\begin{corollary}                           \label{btcor}
Assume that every element of $K_{\infty}$ with value in $\Gamma$ can be
written as a quotient $r/s$ with $r,s\in S_{\infty}$ such that $0\leq
vs\in\Gamma$. Then the elements $y_i\,$, $i\in\N$, are algebraically
independent over $K_{\infty}\,$.
\end{corollary}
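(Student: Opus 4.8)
The plan is to derive Corollary~\ref{btcor} from Lemma~\ref{btl} by a straightforward contradiction argument. Suppose the $y_i$ are \emph{not} algebraically independent over $K_\infty$. Then there is a nonzero polynomial $F\in K_\infty[X_1,\ldots,X_i]$, for some $i\in\N$, with $F(y_1,\ldots,y_i)=0$. First I would \emph{clear denominators} and \emph{rescale}: each coefficient of $F$ lies in $K_\infty$, hence (multiplying through by the product of the relevant denominators) we may assume all coefficients lie in the subring generated by $S_\infty$; then, using the hypothesis that every element of $K_\infty$ with value in $\Gamma$ is a quotient $r/s$ with $r,s\in S_\infty$ and $0\le vs\in\Gamma$, together with the fact that the $S_k$ are $K$-subspaces with $S_k\subseteq S_{k+1}$, I would arrange after a further multiplication by a suitable element of $S_\infty$ (and division by $c_F$, using that $\Gamma$ is convex to keep values in range) that $F$ has all its coefficients in some $S_{k-1}\cap\mathcal{O}_L$ and that $c_F$ has value $0$, or more generally $vc_F\le\alpha_{E_i(k)}$ for suitably large $k$. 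Since only finitely many coefficients are involved and the $S_k$ exhaust $S_\infty$, such a $k$ exists; enlarging $k$ further we may also assume $k\ge 2$ and that $\deg_{X_j}F<k$ for every variable $X_j$.

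With $F$ in this normalized form, Lemma~\ref{btl} applies and yields
\[
vF(y_1,\ldots,y_i)\><\>va_{E_i(k+1)}\><\>\infty\,,
\]
so in particular $F(y_1,\ldots,y_i)\ne 0$, contradicting the assumed algebraic dependence. This contradiction proves that the $y_i$ are algebraically independent over $K_\infty$.

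The main obstacle is the normalization step: massaging an arbitrary algebraic dependence into a polynomial that satisfies all four hypotheses of Lemma~\ref{btl} simultaneously --- coefficients in $S_{k-1}\cap\mathcal{O}_L$, the value bound $\alpha_{E_i(k)}\ge vc_F$, and bounded degree in each variable --- while keeping track of how clearing denominators and dividing by $c_F$ move values around inside the convex subgroup $\Gamma$. The key points that make it work are that $S_\infty=\bigcup_k S_k$ is a directed union of $K$-subspaces (so finitely many coefficients always land in a common $S_{k-1}$), that $\Gamma$ is convex in $vK_\infty$ (so the values produced by clearing denominators and normalizing $c_F$ stay controlled), and that the sequences $(E_i(k))_k$ are strictly increasing and satisfy \eqref{k+1k}, so the required inequality $vc_F\le\alpha_{E_i(k)}$ can be met by passing to a large enough $k$ using (A1). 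Once the normalization is in place, the conclusion is immediate from Lemma~\ref{btl}.
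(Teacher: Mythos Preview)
Your overall strategy---normalize the polynomial and invoke Lemma~\ref{btl}---is correct, and it is exactly what the paper does. However, there is a genuine gap in your normalization step. You claim that after clearing denominators and rescaling you can arrange that \emph{all} coefficients of $F$ lie in some $S_{k-1}\cap\mathcal{O}_L$. But the hypothesis of the corollary only tells you that elements of $K_\infty$ \emph{with value in $\Gamma$} are quotients $r/s$ with $r,s\in S_\infty$; it says nothing about elements whose value lies above $\Gamma$. After you divide by a coefficient of minimal value (so all coefficients are integral and one has value $0\in\Gamma$), some coefficients may still have value larger than every $va_k$, i.e., outside $\Gamma$. For those you have no mechanism to force membership in $S_\infty$, and indeed they need not lie there.

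The paper handles this by writing the normalized polynomial as $g=f+h$, where $f$ collects the coefficients with value in $\Gamma$ (these do land in $S_\infty$ after multiplication by the product $\tilde{s}$ of denominators) and $h$ collects the rest. Since $vy_i\ge 0$, one has $vh(y_1,\ldots,y_i)>va_k$ for all $k$; Lemma~\ref{btl} is then applied to $f$ alone, giving $vf(y_1,\ldots,y_i)<va_{E_i(k+1)}<vh(y_1,\ldots,y_i)$, whence $vg(y_1,\ldots,y_i)=vf(y_1,\ldots,y_i)<\infty$. A minor additional point: your suggested ``division by $c_F$'' is problematic because the $S_k$ are only $K$-subspaces, not $K_\infty$-modules, so dividing by an element of $K_\infty$ can throw coefficients out of $S_\infty$; the paper instead just takes $k$ large enough that $vc_f\le\alpha_{E_i(k)}$, which works since $vc_f\in\Gamma$.
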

\begin{proof}
We have to check that $g(y_1,\ldots,y_i)\ne 0$ for all $i$ and all
nonzero polynomials $g(X_1,\ldots,X_i)\in K_{\infty}[X_1,\ldots,X_i]$.
After division by some coefficient of $g$ with minimal value we may
assume that $g$ has integral coefficients in $K_{\infty}$ and at least
one of them has value $0\in \Gamma$. We write all its coefficients which
have value in $\Gamma$ in the form as given in our assumption. We take
$\tilde{s}$ to be the product of all appearing denominators. Then
$v\tilde{s}\in \Gamma$. After multiplication with $\tilde{s}$, all
coefficients of $g$ with value in $\Gamma$ are elements of
$S_{\infty}\,$, and there is at least one such coefficient. Now we write
$g(X_1,\ldots,X_i)=f(X_1,\ldots,X_i)+ h(X_1,\ldots,X_i)$ where every
coefficient of $f$ is in $S_{\infty}$ and has value less than $va_k$ for
some $k$, and every coefficient of $h$ has value bigger than $va_k$ for
all $k$ (we allow $h$ to be the zero polynomial). Since $g$ has
coefficients of value $v\tilde{s}$, the polynomial $f$ is nonzero.
Since $vy_i\geq 0$ for all $i$, we have that $vh(y_1,\ldots,y_i)$ is
bigger than $va_k$ for all $k$.

We choose $k$ such that the assumptions of Lemma~\ref{btl} hold; note
that $k$ exists since by our definition of $f$, the coefficient $c_f$
has value less than $va_k$ for some $k$. We obtain that
\[
vf(y_1,\ldots,y_i)\> < \> va_{E_i(k+1)}\> < \>vh(y_1,\ldots,y_i)\;.
\]
This gives that
\[
vg(y_1,\ldots,y_i)\>=\>v(f(y_1,\ldots,y_i)+h(y_1,\ldots,y_i))\>=\>
vf(y_1,\ldots,y_i)\> < \> va_{E_i(k+1)}\> < \>\infty\;,
\]
that is, $g(y_1,\ldots,y_i)\ne 0$.
\end{proof}

\parm
Now we are able to give the
\sn
{\bf Proof of Theorem~\ref{MTai}:}
\sn
In all cases of the proof, we will choose functions $\varphi$ that have
the previously required properties. We will choose a suitable sequence
$(b_k)_{k\in\N}$ of elements in $L$ and a sequence $(c_k)_{k\in\N}$ in
$K$. Then we will set $a_k:=c_kb_k$ and choose some values $\alpha_k\geq
va_k\,$.

\parm
First, let us consider the \underline{valuation-transcendental case}. We
set
\[
\varphi(k,\ell)\>:=\>k+k\ell\;,
\]
and note that equations (\ref{E}) now read as follows:
\begin{eqnarray*}
& E_1(k+1)\>\geq\>k+kE_1(k)+1,\\
& E_i(k+1)\>\geq\>E_{i-1}(k+kE_i(k)+1)\;.
\end{eqnarray*}
Further, we will work with a suitable element $t\in {\cal O}_L$
transcendental over $K$ and set, after a suitable choice of the sequence
$(c_k)_{k\in\N}\,$,
\begin{eqnarray*}
a_k &:= & c_k t^k\;,\\
\alpha_k &:= & va_k\;,\\
S_k &:= & K+Kt+\ldots+Kt^k\;.
\end{eqnarray*}
Conditions (A2) and (A3) are immediate consequences of our choice of
$S_k$ as the set of all polynomials in $K[t]$ of degree at most $k$.

\pars
Suppose that $vL/vK$ is not a torsion group. Then we pick $t\in
{\cal O}_L$ such that $vt$ is rationally independent over $vK$ (that is,
$nvt\notin vK$ for all integers $n>0$). Further, for all $k$ we set
$b_k=t^k$ and $c_k=1$ so that $a_k=t^k$. Then condition (A1) is
satisfied since we have that
\[
0\leq va_k=\alpha_k=vt^k= kvt<(k+1)vt=vt^{k+1}=va_{k+1}
\]
and
\[
k\alpha_{E_i(k)}=kva_{E_i(k)}=
kvt^{E_i(k)}=kE_i(k)vt<(k+kE_i(k))vt=\alpha_{\varphi(k,E_i(k))}\;.
\]

Suppose now that $vL/vK$ is a torsion group. In this case, $Kv|Lv$ is
transcendental by assumption, and we note that since $v$ is assumed
nontrivial on $L$, it must be nontrivial on $K$. We pick $t\in
{\cal O}_L$ such that $vt=0$ and $tv$ is transcendental over $Kv$.
Further, we choose a sequence $(c_k)_{k\in\N}$ in ${\cal O}_K$ such
that
\[
vc_{k+1} \> \geq\>kvc_k
\]
for all $k$. Since $va_k=vc_k+kvt=vc_k\,$, we obtain that
$va_k=\alpha_k<va_{k+1}$ and
\begin{equation}                            \label{kvak<1}
k\alpha_k\>=\>kva_k \> \leq \>va_{k+1}\;.
\end{equation}
Then by (\ref{k+1k}),
\begin{equation}                            \label{kvak<2}
k\alpha_{E_i(k)}\><\>E_i(k)\alpha_{E_i(k)}\>\leq\>va_{E_i(k)+1}\>\leq\>
va_{\varphi(k,E_i(k))}\>\leq\>\alpha_{\varphi(k,E_i(k))}\;.
\end{equation}
Hence again, condition (A1) is satisfied.

Now we have to verify (A4), simultaneously for all of the above choices
for $a_k\,$. Take $d_0,\ldots,d_m\in S_k\,$, $m\leq k$, and write
$d_j=\sum_{\nu=0}^{k} d_{j\nu}t^{\nu}$ with $d_{j\nu}\in K$. Then
\[
d_0+d_1a_{k+1}+\ldots+d_ma_{k+1}^m\>=\>\sum_{j=0}^{m}
\sum_{\nu=0}^{k} c_{k+1}^j d_{j\nu}t^{j(k+1)+\nu}\;.
\]
In this sum, each power of $t$ appears only once. So we have, by
Lemma~\ref{prelBour},
\[
v(d_0+d_1a_{k+1}+\ldots+d_ma_{k+1}^m)\>=\>\min_{j,\nu}
vc_{k+1}^j d_{j\nu}t^{j(k+1)+\nu}\>:=\>\beta\;.
\]
If this minimum is obtained at $j=j_0$ and $\nu=\nu_0\,$, then
\begin{eqnarray*}
\beta & = & vc_{k+1}^{j_0} d_{j_0\nu_0}t^{j_0(k+1)+\nu_0}\>=\>
\min_{\nu} vc_{k+1}^{j_0} d_{j_0\nu}t^{j_0(k+1)+\nu} \\
& = & (\min_{\nu} vd_{j_0\nu}t^{\nu})+va_{k+1}^{j_0}\>=\>
vd_{j_0}a_{k+1}^{j_0}\;,
\end{eqnarray*}
where the last equality again holds by Lemma~\ref{prelBour}. For all
$j$,
\[
\beta\>\leq\>
\min_{\nu} vc_{k+1}^{j} d_{j\nu}t^{j(k+1)+\nu}\>=\>(\min_{\nu}
vd_{j\nu}t^{\nu})+va_{k+1}^{j}\>=\> vd_{j}a_{k+1}^{j}\;.
\]
This gives that
\[
v(d_0+d_1a_{k+1}+\ldots+d_ma_{k+1}^m)\>=\>\beta\>=\>\min_j
vd_{j}a_{k+1}^{j}\>\leq\>vd_m+mva_{k+1}\>=\>vd_m+m\alpha_{k+1}\;,
\]
as required. Finally, we have to verify the assumption of
Corollary~\ref{btcor}. Each element in $K_{\infty}$ can be written as a
quotient $r/s$ of polynomials in $t$ with coefficients in $K$. After
multiplying both $r$ and $s$ with a suitable element from $K$ we may
assume that $s$ has coefficients in ${\cal O}_K$ and one of them is $1$.
If this is the coefficient of $t^i$, say, then it follows by
Lemma~\ref{prelBour} that $0\leq vs\leq vt^i\leq va_i$ and thus, $vs\in
\Gamma$.

Now we take any maximal immediate extension $(M,v)$ and $y_i$ as
defined preceeding to Lemma~\ref{btl}. Then we can infer from
Corollary~\ref{btcor} that the elements $y_i$ are algebraically
independent over $K_{\infty}\,$; that is, the transcendence degree of
$M$ over $K_{\infty}$ is infinite. Since the transcendence degree of
$L$ over $K$ and thus also that of $L$ over $K_{\infty}$ is finite, we
can conclude that the transcendence degree of $M$ over $L$ is infinite.

\parb
Next, we consider the \underline{value-algebraic case} and the
\underline{residue-algebraic case}. We will assume for now that there is
an algebraic subextension $L_0|K$ of $L|K$ such that $vL_0/vK$ contains
elements of arbitrarily high order, or $L_0v$ contains elements of
arbitrarily high degree over $Kv$. The remaining cases will be treated
at the end of the proof of our theorem.

For the present case as well as the separable-algebraic case, we work
with any function $\varphi$ that satisfies the conditions outlined in
the beginning of this section, and with
\[
S_k\>:=\>K(a_1,\dots,a_k)\;.
\]
Then $S_{\infty}$ is a field and the assumption of Corollary~\ref{btcor}
are trivially satisfied (taking $s=1$). Further, condition (A2) is
trivially satisfied. To prove that condition (A3) holds, take any $u\in
S_{\ell}= K(a_1,\dots,a_{\ell})\,$. If $n=\max\{k,\ell\}$, then
$d_0,\ldots,d_k,u\in K(a_1,\dots,a_n)=S_n$ and therefore,
\[
d_0+d_1u +\ldots+d_ku^k\>\in\> S_n\>\subseteq\> S_{\varphi(k,\ell)}\;.
\]
This shows that (A3) holds.

By induction, we define $a_k\in L_0$ as follows, and we always take
$\alpha_k=va_k$. We start with $a_1=1$ and $\alpha_1=0$. Suppose that
$a_1,\dots,a_k$ are already defined. Since $K(a_1,\dots,a_k)|K$ is a
finite extension, also $vK(a_1,\dots,a_k)/vK$ and $K(a_1,\dots,a_k)v|Kv$
are finite. Hence by our assumption in the algebraic case, there is some
$b_{k+1}\in L_0$ such that
\begin{eqnarray}
&& \mbox{$0,vb_{k+1},2vb_{k+1},\ldots,kvb_{k+1}$ lie in distinct cosets
modulo $vK(a_1,\dots,a_k)$, or} \label{va}\\
&& \mbox{$1,b_{k+1}v,(b_{k+1}v)^2,\ldots,
(b_{k+1}v)^k$ are $K(a_1,\dots,a_k)v$-linearly independent.} \label{ra}
\end{eqnarray}
If $L_0v$ contains elements of arbitrarily high degree over $Kv$, we
always choose $b_{k+1}$ such that (\ref{ra}) holds; in this case,
$vb_{k+1} =0$ and we choose the elements $c_k$ as in the
residue-transcendental case above. Otherwise, $vL_0/vK$ contains
elements of arbitrarily high order, and we always choose $b_{k+1}$ such
that (\ref{va}) holds. In this case, we choose $c_{k+1}$ such that for
$a_{k+1}:=c_{k+1}b_{k+1}$ we obtain $k\alpha_k=kva_k\leq va_{k+1}\,$;
this is possible since the values of $b_k$ and hence of all $a_k$ lie in
the convex hull of $vK$ in $vL$. As in the residue-transcendental case
above, we obtain (\ref{kvak<1}) and (\ref{kvak<2}), showing that
condition (A1) is satisfied.

To prove that (A4) holds, take any $k\geq 1$ and $d_0,\ldots,d_k\in S_k
=K(a_1,\dots,a_k)$. By Lemma~\ref{algvind} applied to $b_{k+1}\,$,
\begin{eqnarray*}
v(d_0+d_1a_{k+1}+\ldots+d_ka_{k+1}^k) & = &
v(d_0+d_1c_{k+1}b_{k+1}+\ldots+d_kc_{k+1}^kb_{k+1}^k)\\
 & = & \min_i vd_ic_{k+1}^ib_{k+1}^i\>=\>\min_i vd_ia_{k+1}^i\;.
\end{eqnarray*}
This shows that (A4) holds.

As in the valuation-transcendental case, we can now deduce
our assertion about the transcendence degree.

\parb
Next, we consider the \underline{separable-algebraic case}. In this
case, we can w.l.o.g.\ assume that $(K,v)$ is henselian. Indeed, each
maximal immediate extension of $(L,v)$ contains a henselization $L^h$ of
$(L,v)$ and hence also a henselization $K^h$ of $(K,v)$, and our
assumption on $L_0$ implies that the subfield $L_0.K^h$ of $L^h$ is an
infinite separable-algebraic extension of $K^h$. (Here, $L_0.K^h$
denotes the field compositum, i.e., the smallest subfield of $L^h$ which
contains $L_0$ and $K^h$.)

We take $S_k$ and $\varphi(k,\ell)$ as in the previous case, so that
again, (A2), (A3) and the assumption of Corollary~\ref{btcor} hold. Then
we take $a_1=b_1$ to be any element in ${\cal O}_{L_0}\setminus K$ and
choose some $\alpha_1\in vK$ such that $\alpha_1\geq\mbox{\rm kras}
(a_1,K) \in v\tilde{K}$; this is possible since $vK$ is cofinal in its
divisible hull, which is equal to $v\tilde{K}$. Inequality (\ref{krasa})
of Lemma~\ref{vf(a)} shows that $\mbox{\rm kras}(a_1,K)\geq
va_1\,$, so that $\alpha_1\geq va_1\,$. Suppose we have chosen $a_1,
\ldots,a_k\in {\cal O}_{L_0}\,$. Since $L_0|K$ is infinite and
separable-algebraic, the same is true for $L_0|K(a_1,\ldots,a_k)$. By
the Theorem of the Primitive Element, we can therefore find an element
$b_{k+1}\in L_0$ such that
\[
[K(a_1,\ldots,a_k,b_{k+1}):K(a_1,\ldots,a_k)]\>\geq \>k+1\;.
\]
We choose $c_{k+1}\in K$ such that for $a_{k+1}:=c_{k+1}b_{k+1}$ we have
that $k\alpha_k\leq va_{k+1}\,$. Finally, we choose $\alpha_{k+1}\in vK$
such that
\[
\alpha_{k+1}\>\geq\>\mbox{\rm kras}(a_{k+1},K)\>\geq va_{k+1}\>.
\]
Again, we obtain that (\ref{kvak<2}) and (A1) hold.

It only remains to show that (A4) holds. But this follows readily
from inequality (\ref{krasfa}) of Lemma~\ref{vf(a)}, where we take
$K(a_1,\ldots,a_k)$ in place of $K$ and $a=a_{k+1}$, together with the
fact that $\mbox{\rm kras}(a_{k+1},K(a_1\ldots,a_k))\leq
\mbox{\rm kras}(a_{k+1},K)$.

As before, we now obtain our assertion about the transcendence degree.

\parb
It remains to prove the \underline{value-algebraic case} and the
\underline{residue-algebraic case} for transcendental valued field
extensions $(L|K,v)$ of finite transcendence degree. We assume that
$vL/vK$ is a torsion group containing elements of arbitrarily high order
or the extension $Lv|Kv$ is algebraic and such that $Lv$ contains
elements of arbitrarily high degree over $Kv$.

Take any subextension $E|K$ of $L|K$. Then $(L|K,v)$ satisfies the
above assumption if and only if at least one of the extensions $(L|E,v)$
and $(E|K,v)$ satisfies the assumption. Choose a transcendence basis
$(x_1,\ldots,x_n)$ of $L|K$ and set
\[
F\>:=\>K(x_1,\ldots,x_n)\>.
\]
Then $L|F$ is algebraic. By what we have already proved, if $vL/vF$
contains elements of arbitrarily high order or $Lv$ contains elements of
arbitrarily high degree over $Fv$, then any maximal immediate extension
of $(L,v)$ has infinite transcendence degree over $L$.

Suppose now that $(F|K,v)$ satisfies the assumption on the value group or
the residue field extension. Take $s\in\N$ minimal such that
$vK(x_1,\ldots,x_s)/vK$ contains elements of arbitrarily high order or
$K(x_1,\ldots,x_s)v$ contains elements of arbitrarily high degree over
$Kv$. Then the assertion holds also for the value group or the residue
field extension of $(K(x_1,\ldots,x_s)|K(x_1,\ldots,x_{s-1}),v)$. We can
replace $K$ by $K(x_1,\ldots,x_{s-1})$ and we will write $x$ in place of
$x_s\,$ so that now we have a subextension $(K(x)|K,v)$ that satisfies
the assertion for its value group or its residue field extension.

In both the value-algebraic and the residue-algebraic case we define
$b_k\in K[x]$ by induction on $k$ and set
\[
S_k\>:=\>K[x]_{N_k}\quad \mbox{with $N_k:= \deg b_k$.}
\]

Assume that $vK(x)$ contains elements of arbitrarily high order modulo
$vK$. Then such elements can be already chosen from $vK[x]$. We set
$b_1=1$. Suppose that $b_1,\ldots,b_k$ are already chosen with $\deg
b_{i-1}< \deg b_i$ for $1<i\leq k$. From Corollary~\ref{polvspaces} we
know that $vS_k$ contains only finitely many values that represent
distinct cosets modulo $vK$. Since all of these values are torsion
modulo $vK$, the subgroup $\langle vS_k \rangle$ of $vK(x)$ generated by
$vS_k$ satisfies $(\langle vS_k \rangle:vK)<\infty$.
%
By assumption, there is $b_{k+1}\in K[x]$ for which the order of
$vb_{k+1}$ modulo $vK$ is at least $(k+1)(\langle vS_k \rangle:vK)$;
this forces $0,vb_{k+1},2vb_{k+1},\ldots,kvb_{k+1}$ to lie in distinct
cosets modulo $\langle vS_k \rangle$. Since $b_{k+1}\notin
K[x]_{N_k}\,$, we have that $N_{k+1}=\deg b_{k+1}>N_k\,$.

\pars
Assume now that $K(x)v$ contains elements of arbitrarily high degree
over $Kv$. Without loss of generality we can assume that $vK(x)/vK$
is then a torsion group with a finite exponent $N$. Otherwise, $vL/vK$ is
not a torsion group and we are in the valuation-transcendental case or
$vK(x)/vK$ contains elements of arbitrarily high order and we are in
the value-algebraic case.

The elements of arbitrarily high degree over $Kv$ can be chosen from
$K[x]v$. Indeed, suppose there is $m\in\N$ such that $[Kv(fv):Kv]\leq m$
for every polynomial $f$ of nonnegative value. Take any $r=\frac{h}{g}$,
where $g,h\in K[x]$ and $vr=0$. By the assumption on
$vK(x)/vK$ we have that $nvh=vd$ for some natural number $n\leq N$
and $d\in K$. Then
\[
r\>=\> \frac{d^{-1}h^n}{d^{-1}h^{n-1}g}
\]
and $vd^{-1}h^{n-1}g=vd^{-1}h^n=0$, since $vh=vg$. Therefore we may
assume that $vh=vg=0$. Hence,
\[
[ Kv (rv):Kv ]\>\leq\> [ Kv(rv, gv):Kv ] \>=\>
[ Kv(hv,gv):Kv ]\leq m^2
\]
for every $r\in K(x)$ with $vr=0$, a contradiction to our assumption.

As in the value-algebraic case, we set $b_1=1$. Suppose that
$b_1,\ldots,b_k$ are already chosen with $\deg b_{i-1}< \deg b_i$
for $1<i\leq k$. By Corollary~\ref{polvspaces},
there are at most $NN_k+1$ many $Kv$-linearly independent elements in
$K[x]_{NN_k}v$, and as all of them are algebraic over $Kv$, it follows
that the extension $Kv\left(K[x]_{NN_k}v\right)|Kv$ is finite. By
assumption, there is $b_{k+1}\in K[x]$ such that $vb_{k+1}=0$ and the
degree of $b_{k+1}v$ over $Kv$ is at least $(k+1)[Kv\left(K[x]_{NN_k}v
\right):Kv]$, which forces the elements $1,b_{k+1}v,(b_{k+1}v)^2,\ldots,
(b_{k+1}v)^k$ to be $Kv\left(K[x]_{NN_k}v\right)$-linearly independent.
Since $b_{k+1}\notin K[x]_{NN_k}\,$, we have that $N_{k+1}=\deg b_{k+1}>
NN_k\geq N_k\,$.

\pars
For the value-algebraic as well as for the residue-algebraic case we set
\[
\varphi (k,l):=N_k+N_kN_l.
\]
Since in both cases $(N_k)_{k\in\N}$ is a strictly increasing sequence
of natural numbers, $\varphi$ has the required properties. As in the
first part of the proof of the value-algebraic and the residue-algebraic
case, one can show that the elements $c_k\in K$ can be chosen in such a
way that condition (A1) holds for $a_k:=c_kb_k$ and $\alpha_k:=va_k$.
Since $(N_k)_{k\in\N}$ is strictly increasing, condition (A2) is
trivially satisfied. Moreover, $N_k\geq k$ for every $k\in\N$. Hence for
any $d_0,\ldots,d_k\in S_k$ and $u\in S_l\,$,
\[
\deg (d_0+d_1u+\cdots + d_ku^k)\leq N_k+kN_l\>\leq\> \varphi (k,l)\>\leq\>
N_{\varphi (k,l)}\>.
\]
Thus, $d_0+d_1u+\cdots + d_ku^k\in S_{\varphi (k,l)}$. This shows that
(A3) holds.

\pars
To verify (A4), we take any $k,m\in\N$ with $m\leq k$, and $d_0,\ldots,
d_m \in S_k$. We wish to estimate the value of the element $d_0+
d_1a_{k+1} +\cdots d_ma_{k+1}^m$. We discuss first the value-algebraic
case. Note that the values $v(d_ia_{k+1}^i)$, $0\leq i\leq m$, lie in
distinct cosets modulo $vK$. Indeed, $vd_ia_{k+1}^i=
vd_ic_{k+1}^i+ivb_{k+1}$, where $d_ic^i_{k+1}\in S_k$. Therefore, if
\[
vd_ia_{k+1}^i+vK\>=\>vd_ja_{k+1}^j+vK
\]
for some $0\leq i\leq j\leq m$, then also
\[
ivb_{k+1}+\langle vS_k \rangle\>=\>jvb_{k+1}+\langle vS_k \rangle\>,
\]
which by our choice of $b_{k+1}$ yields that $i=j$. Hence, from Lemma
\ref{algvind} it follows that
\[
v(d_0+d_1a_{k+1}+\cdots d_ma_{k+1}^m)\>=\> \min_{i} vd_ia_{k+1}^i\>\leq\>
vd_m+mva_{k+1}\>.
\]

We obtain the same assertion also in the residue-algebraic case. If
$d_i =0$ for all $i$, then it is trivially satisfied. If not, take $i_0$
so that
\[
vd_{i_0}c_{k+1}^{i_0}\>=\> \min_i vd_ic_{k+1}^i\>=\>\min_i
vd_ia_{k+1}^i\>.
\]
We have that $vd_{i_0}^N=vc$ for some $c\in K$. Setting $d:=c^{-1}
c_{k+1}^{-i_0}d_{i_0}^{N-1}\,$, we obtain that
\[
v(d_0+d_1a_{k+1}+\cdots d_ma_{k+1}^m)\>= -vd +v\xi
\]
with $\xi:= dd_0+dd_1c_{k+1}b_{k+1}+\cdots+dd_m c^m_{k+1}b_{k+1}^m$.
Note that $dd_i\in K[x]_{NN_k}$ for $0\leq i\leq m$, and that
\[
vdd_i c^i_{k+1}\>\geq \>vdd_{i_0} c^{i_0}_{k+1}\>=\>vc^{-1}d_{i_0}^N
\>=\>0.
\]
In particular, $v\xi\geq 0$, and
\[
\xi v=(dd_0)v+(dd_1c_{k+1})vb_{k+1}v+\cdots+(dd_m c^m_{k+1})v(b_{k+1}v)^m
\]
is a linear combination of $1,b_{k+1}v,(b_{k+1}v)^2,\ldots,(b_{k+1}v)^m$
with coefficients from $Kv\left(K[x]_{N\cdot N_k}v\right)$. Since at
least one of them, the element $dd_{i_0} c^{i_0}_{k+1}v$, is nonzero,
also the linear combination is nontrivial by our choice of $b_{k+1}$.
Hence $v\xi=0$ and
\[
v(d_0+d_1a_{k+1}+\cdots d_ma_{k+1}^m)\>=\> -vd \>=\>vd_{i_0}c_{k+1}^{i_0}
\> \leq \> vd_m+mva_{k+1}\>.
\]
Therefore, condition (A4) is satisfied in both cases.

\pars
It suffices now to verify the assumptions of Corollary \ref{btcor}. Take
any element $\frac{h}{g}$ of $K_{\infty}=K(x)$, where $g,h\in
S_{\infty}= K[x]$. In both the value-algebraic and the residue-algebraic
case we assumed that $vK(x)/vK$ is a torsion group. Therefore, as in the
residue-algebraic case above one can multiply $h$ and $g$ by a
suitable polynomial to obtain that $vg=0\in \Gamma $. Hence the
assumptions of the corollary are satisfied.

Since the transcendence degree of the extension $L|K(x)$ is finite,
we can now deduce the assertion about about the transcendence degree as
in the previous cases.

\parm
In the \underline{value-algebraic case}, we still have to deal with the
case where there is a subgroup $\Gamma\subseteq vL$ containing $vK$ such
that $\Gamma/vK$ is an infinite torsion group and the order of each of
its elements is prime to the characteristic exponent of $Kv$.
We may assume that $Lv|Kv$ is algebraic since otherwise, the assertion
of our theorem follows from the valuation-transcendental case. Since
every maximal immediate extension of $(L,v)$ contains a henselization
of $(L,v)$, we may assume that both $(L,v)$ and $(K,v)$ are henselian.
We take $L'$ to be the relative separable-algebraic closure of $K$ in
$L$. Then by Lemma~\ref{vrrac}, $vL/vL'$ is a $p$-group, which yields
that $\Gamma\subseteq vL'$. In view of the fundamental inequality, we
find that $L'|K$ must be an infinite extension. Now the assertion of our
theorem follows from the separable-algebraic case.

\parb
Finally, we have to deal with our additional assertion about the
completion. Since the transcendence degree of $L|K$ is finite, we know
that $vL/vK$ has finite rational rank. Therefore, $vK$ is cofinal in
$vL$ or there exists some $\alpha\in vL$ such that the sequence
$(i\alpha)_{i\in\N}$ is cofinal in $vL$. In the latter case (which
always holds if $vL$ contains an element $\gamma$ such that
$\gamma>vK$), we are in the value-transcendental case and we choose the
element $t$ such that $vt=\alpha$. In the former case, provided that the
cofinality of $vL$ is countable, we choose the elements $c_i$ such that
the sequence $(vc_ib_i)_{i\in\N}$ is cofinal in $vL$. In all of these
cases, the sequence $(va_i)_{i\in\N}$ will be cofinal in $vL$ and the
elements $y_i$ will lie in the completion of $(L,v)$.
\QED

%
%
\section{Extensions of maximal fields}      \label{sectemf}
We start with the
\sn
{\bf Proof of Theorem~\ref{MT2}}:
\sn
Take a maximal field $(K,v)$ which satisfies (\ref{finpdeg}), and denote
by $p$ the characteristic exponent of $Kv$. Further, take an infinite
algebraic extension $(L|K,v)$.
Denote the relative separable-algebraic closure of $K$ in $L$ by $L'$.
Assume that $L'|K$ is infinite. Since $K$ is henselian, the
separable-algebraic case of Theorem \ref{MTai} shows that any maximal
immediate extension of $(L,v)$ has infinite transcendence degree over
$L$.

Assume now that $L'|K$ is a finite extension. Then the field $(L',v)$
is maximal, $(vL':pvL')[L'v:L'v^p]=(vK:pvK)[Kv:Kv^p]<\infty$, and $L|L'$
is an infinite purely inseparable extension. Therefore at least one of
the extensions $vL|vL'$ or $Lv|L'v$ is infinite. Indeed, suppose that
$(vL:vL')$ and $[Lv:L'v]$ were finite. Take any finite subextension
$E|L'$ of $L|L'$ such that $[E:L']>(vL:vL')[Lv:L'v]$. Then
\[
[E:L']\> >\>(vL:vL')[Lv:L'v]\>\geq\> (vE:vL')[Ev:L'v]\>,
\]
which contradicts the fact that $L'$ as a maximal field is defectless
by Theorem~\ref{mfhdl}. If $vL/vL'$ contains elements of arbitrarily
high order or $Lv$ contains elements of arbitrarily high degree over
$L'v$, then from the value-algebraic or residue-algebraic case of
Theorem \ref{MTai} we deduce that any maximal immediate extension of $L$
is of infinite transcendence degree over $L$. Otherwise, $vL/vL'$ is a
$p$-group of finite exponent, $Lv|L'v$ is a purely inseparable extension
with $(Lv)^{p^n}\subseteq L'v$ for some natural number $n$, and $vL/vL'$
or $Lv|L'v$ is infinite. But this is not possible if $[L'v:L'v^p]
(vL':pvL')<\infty$.                                   \QED

\pars
For the proof of Theorem~\ref{infpdeg} we will need the following
result:

\begin{lemma}                               \label{1/pmax}
If $(K,v)$ is a maximal field of characteristic $p>0$, then also
$K^{1/p}$ with the unique extension of the valuation $v$ is a maximal
field.
\end{lemma}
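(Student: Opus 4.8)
The plan is to show that $K^{1/p}$ admits no proper immediate extension by transporting the maximality of $(K,v)$ across the (bijective, valuation-preserving up to the factor $\tfrac1p$) correspondence $x\mapsto x^p$ between $K^{1/p}$ and $K$. Concretely, suppose for contradiction that $(K^{1/p},v)$ has a proper immediate extension; by Kaplansky's characterization (Theorem~\ref{KT2} together with the general theory of pseudo Cauchy sequences recalled before it), it suffices to produce a pseudo Cauchy sequence in $(K^{1/p},v)$ without a pseudo limit in $K^{1/p}$, and then derive a contradiction. So I would start from such a pseudo Cauchy sequence $(a_\nu)_{\nu<\lambda}$ in $K^{1/p}$.

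The key step is the observation that the $p$-th power map is an order isomorphism on a valued field of characteristic $p$: for $x,y\in K^{1/p}$ one has $v(x^p-y^p)=v((x-y)^p)=p\cdot v(x-y)$, since $x^p-y^p=(x-y)^p$ in characteristic $p$. First I would use this to check that if $(a_\nu)_{\nu<\lambda}$ is a pseudo Cauchy sequence in $K^{1/p}$, then $(a_\nu^p)_{\nu<\lambda}$ is a pseudo Cauchy sequence in $K$ (the defining inequalities on the values $v(a_\mu-a_\nu)$ are simply multiplied by $p$, which preserves all the required strict inequalities and cofinality). Next, since $(K,v)$ is maximal, $(a_\nu^p)_{\nu<\lambda}$ has a pseudo limit $b\in K$. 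I would then set $a:=b^{1/p}\in K^{1/p}$ and verify, again via $v(a_\nu-a)=\tfrac1p\,v(a_\nu^p-b)$, that $a$ is a pseudo limit of $(a_\nu)_{\nu<\lambda}$ in $K^{1/p}$ — contradicting the choice of the sequence. Hence $(K^{1/p},v)$ is maximal.

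I should also record at the outset that $v$ extends uniquely from $K$ to $K^{1/p}$: this is a standard fact for purely inseparable extensions (the value group and residue field are forced to be $\tfrac1p vK$ and $(Kv)^{1/p}$, and one can also invoke that $K$, being maximal, is henselian by Theorem~\ref{mfhdl}), so ``the unique extension'' in the statement is legitimate and $K^{1/p}$ genuinely sits inside $\tilde K$ with a well-defined valuation.

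**The main obstacle** is essentially bookkeeping rather than a deep point: one must be careful that ``pseudo limit in $K^{1/p}$'' really does correspond, under $x\mapsto x^p$, to ``pseudo limit in $K$'' — i.e. that the map $a\mapsto a^p$ induces a bijection $K^{1/p}\to K$ (it does, since $K^{1/p}|K$ is purely inseparable of exponent $\le 1$) so that the non-existence of a pseudo limit in $K^{1/p}$ transfers to the non-existence of one in $K$, which is where maximality of $K$ is used. I expect no real difficulty beyond verifying these correspondences cleanly; the characteristic-$p$ identity $x^p-y^p=(x-y)^p$ does all the work.
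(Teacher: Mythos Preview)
Your proposal is correct and follows exactly the same approach as the paper: transport a pseudo Cauchy sequence in $K^{1/p}$ to one in $K$ via the $p$-th power map (using $x^p-y^p=(x-y)^p$ in characteristic $p$), find a pseudo limit $b\in K$ by maximality, and pull it back to $b^{1/p}\in K^{1/p}$. The paper's proof is just a terser version of what you wrote, omitting the contradiction framing and the remarks on uniqueness of the extension.
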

\begin{proof}
If $(a_{\nu})$ is a pseudo Cauchy sequence in $L$, then $(a_{\nu}^p)$ is
a pseudo Cauchy sequence in $K$. Since $(K,v)$ is maximal, it has a
pseudo limit $b\in K$. But then, $a=b^{1/p}\in L$ is a pseudo limit of
$(a_{\nu})$.
\end{proof}

\pars
After this preparation, we can give the
\sn
{\bf Proof of Theorem~\ref{infpdeg}}:
\sn
Part a) follows immediately from Lemma~\ref{1/pmax}.

\pars
To prove assertions b) and c) we consider the following subsets of $K$.
We take $A$ to be a set of elements of $K$ such that the cosets
$\frac{1}{p}va+vK$, $a\in A$, form a basis of the $\Z/p\Z$-vector space
$\frac{1}{p}vK/vK$. Similarly, we take $B$ to be a set of elements of
the valuation ring of $(K,v)$ such that the residues $(bv)^{1/p}$, $b\in
B$, form a basis of $(Kv)^{1/p}|Kv$.
Then
\[
\frac{1}{p}vK\>=\>vK+\sum_{a\in A} \frac{1}{p}va\Z \;\;\mbox{ and }\;\;
(Kv)^{1/p}\>=\>Kv((bv)^{1/p}\mid b\in B)\>.
\]

In order to prove assertion b) of our theorem, we set
\[
L_{\infty}\>:=\> K(a^{1/p}, b^{1/p}\mid a\in A,\,b\in B)\>\subseteq\>
K^{1/p}
\]
and obtain that $vL_{\infty}=\frac{1}{p}vK$ and $L_{\infty}v=
(Kv)^{1/p}$. So the extension $(K^{1/p}|L_{\infty},v)$ is immediate.
Lemma~\ref{1/pmax} shows that $(K^{1/p},v)$ is a maximal immediate
extension of $(L_{\infty},v)$. Our goal is now to show that under the
assumptions of the theorem, this extension is of degree
at least $\kappa$. Once this is proved, we can take $X\subseteq K^{1/p}$
to be a minimal set of generators of the extension $K^{1/p}|L_{\infty}$.
Then the elements of $X$ are $p$-independent over $L_{\infty}$. Take any
natural number $n$. As $X$ is infinite, we can choose $x_1,\ldots,x_n\in
X$ and set $L_n:=L_{\infty}(X\setminus \{x_1, \ldots,x_n\})$. Then
$K^{1/p}|L_n$ is an immediate extension of degree $p^n$. Similarly,
for $\lambda$ any infinite cardinal $\leq\kappa$, take $Y\subseteq X$ of
cardinality $\lambda$ and set $L_\lambda:=L_{\infty}(X\setminus Y)$.
Then $K^{1/p}|L_\lambda$ is an immediate algebraic extension of degree
$\lambda$.

\pars
We assume first that $\kappa=(vK:pvK)$, so the set $A$ is infinite.
Then we take a partition of $A$ into $\kappa$ many countably infinite
sets $A_{\tau}$, $\tau<\kappa$. We choose enumerations
\[
A_{\tau}\>=\>\{a_{\tau,i} \mid i\in\N\} \>.
\]

For every $\mu<\kappa$ we set ${\cal A}_\mu:=\bigcup_{\tau<\mu} A_\tau$
and
\[
K_\mu\>:=\> K(a^{1/p}\mid a\in {\cal A}_\mu)\>.
\]
Note that ${\cal A}_0=\emptyset$ and $K_0=K$. We claim that
\begin{equation}                            \label{vLmu}
vK_\mu\>=\>vK + \sum_{a\in {\cal A}_\mu}\frac{1}{p}va\Z
\;\;\mbox{ and }\;\; K_\mu v\>=\> Kv\>.
\end{equation}
The inclusions ``$\supseteq$'' are clear. For the converses, we observe
that value group and residue field of $K_\mu$ are the unions of the
value groups and residue fields of all finite subextensions of $K_\mu
|K$. Such subextensions can be written in the form $F=K(a_1,\ldots,a_k)$
with distinct $a_1,\ldots,a_k\in {\cal A}_\mu\,$, and we have that
\[
p^k\>\geq\>[F:K]\>\geq\> (vF:vK)[Fv:Kv]\>\geq\> p^k\cdot 1\>,
\]
so equality holds everywhere. Consequently, $vF=vK + \sum_{i=1}^k
va_i\Z$ and $Fv=Kv$. This proves our claim.

\pars
For every $\tau<\kappa$ we choose a sequence $(c_{\tau,i})_{i\in\N}$
of elements in $K$ such that the sequence of values
\begin{equation}                            \label{sv}
(vc_{\tau,i}^{ } a_{\tau,i}^{1/p})_{i\in\N}
\end{equation}
is strictly increasing. If the cofinality of $vK$ is countable, then the
elements $c_{\tau,i}$ can be chosen in such a way that the sequence
(\ref{sv}) is cofinal in $\frac{1}{p}vK$. For every $n\in\N$, we set
\begin{equation}                                     
\xi_{\tau,n}\>:=\>\sum_{i=1}^n c_{\tau,i}^{ } a_{\tau,i}^{1/p}\>\in\>
K_{\tau+1}\>.
\end{equation}
Then $(\xi_{\tau,n})_{n\in\N}$ is a pseudo Cauchy sequence, hence it
admits a pseudo limit $\xi_{\tau}$ in the maximal field $(K^{1/p},v)$.
In order to show that the degree of $K^{1/p}|L_{\infty}$ is at least
$\kappa$, we prove by induction that for every $\mu<\kappa$ and each
$K'$ such that $K_{\mu+1}\subseteq K'\subseteq L_{\infty}$,
the pseudo Cauchy sequence $(\xi_{\mu,n})_{n\in\N}$ admits no pseudo limit
in $K'(\xi_{\tau}\mid\tau<\mu)$
and the extension
\begin{equation}                            \label{exti}
(K'(\xi_\tau\mid\tau\leq\mu)|K',v)
\end{equation}
is immediate.

Take $\mu<\kappa$ and assume that our assertions have already been
shown for all $\mu'<\mu$. If $\mu=\mu'+1$ is a successor ordinal, then
from (\ref{exti}) we readily get that the extension
\begin{equation}                            \label{exti1}
(K'(\xi_{\tau}\mid\tau<\mu)|K',v)
\end{equation}
is immediate for every $K'$ such that $K_\mu\subseteq K'\subseteq
L_\infty\,$. If $\mu$ is a limit ordinal, then (\ref{exti1}) follows
from the induction hypothesis since $K_{\mu'}\subseteq K_\mu\subseteq
K'$ for each $\mu'<\mu$ and since the union over the increasing chain
of immediate extensions $K'(\xi_{\tau}\mid\tau\leq\mu')$, $\mu'<\mu$,
of $(K',v)$ is again an immediate extension of $(K',v)$.

In order to prove the induction step,
%
%
suppose towards a contradiction that $(\xi_{\tau,n})_{n\in\N}$ admits a
pseudo limit $\eta_{\mu }$ in $K'(\xi_{\tau}\mid\tau<\mu)$ for some $K'$
such that $K_{\mu+1}\subseteq K'\subseteq L_{\infty}$.
%
%
Then $\eta_\mu$ lies already in a finite extension
\begin{equation}                                      \label{K'}
E\>:=\>K_\mu(\xi_\tau\mid\tau<\mu)(a_1^{1/p},\ldots,
a_k^{1/p}, b_1^{1/p},\ldots, b_\ell^{1/p})
\end{equation}
of $K_\mu(\xi_\tau\mid\tau<\mu)$ in $L_{\infty}(\xi_\tau\mid\tau<
\mu)$, with distinct elements $a_1,\ldots,a_k\in A\setminus
{\cal A}_\mu$ and $b_1,\ldots,b_\ell\in B$. We claim that
\begin{equation}                                      \label{vK'}
vE\>=\>vK_\mu+\sum_{i=1}^k\frac{1}{p}va_i\Z
\;\;\mbox{ and }\;\;
Ev\>=\>K_\mu v((b_1v)^{1/p},\ldots,(b_\ell v)^{1/p})\>.
\end{equation}
As the extension (\ref{exti1}) is immediate for $K_\mu$ in place of
$K'$, the inclusions ``$\supseteq$'' are clear. Conversely, from these
inclusions together with the equations in (\ref{vLmu}) and our
assumption on the $a_i\,$, it follows that $(vE:vK) \geq p^k$ as well as
$[Ev:Kv]\geq p^\ell$. Therefore, we have that $p^k\cdot p^\ell\geq
[E:K]\geq (vE:vK) [Ev:Kv] \geq p^k\cdot p^\ell$, so equality holds
everywhere. Consequently, $(vE:vK)= p^k$ and $[Ev:Kv] =p^\ell$, which
proves that the inclusions are equalities.

Now we take $n$ to be the minimum of all $i\in\N$ such that $a_{\mu,i}$
is not among the $a_1,\ldots,a_k\,$. We set $\xi_E:=0$ if $n=1$, and
$\xi_E:=\xi_{\mu,n-1}$ otherwise. Then $\eta_\mu -\xi_E\in E$. In
contrast, the fact that $\eta_\mu$ is a pseudo limit, together with
the first equation of (\ref{vK'}), yields that
\[
v(\eta_\mu -\xi_E)\>=\>v(\xi_{\mu,n}-\xi_E)\>=\>
vc_{\mu,n}+\frac{1}{p}va_{\mu,n}\>\notin\>
vK_\mu+ \sum_{i=1}^k \frac{1}{p}va_i\Z \>=\>vE\>.
\]
This contradiction proves that $(\xi_{\mu,n})_{n\in\N}$ admits no pseudo
limit in $K'(\xi_{\tau}\mid\tau<\mu)$. Thus in particular,
 $\xi_{\mu}\notin K'(\xi_{\tau}\mid\tau<\mu)$. Since $K_{\mu+1}\subseteq
K'$, $(\xi_{\mu,n})_{n\in\N}$ is a pseudo Cauchy sequence in
$(K'(\xi_\tau\mid\tau<\mu),v)$. As $[K'(\xi_\tau\mid \tau<\mu)
(\xi_\mu): K'(\xi_\tau\mid\tau<\mu)]$ $=p$ is a prime, Lemma~\ref{pimm}
shows that the extension $(K'(\xi_\tau\mid\tau\leq\mu)
|K'(\xi_\tau\mid\tau <\mu),v)$ is immediate. As also the extension
(\ref{exti1}) is immediate, we find that the extension
$(K'(\xi_\tau\mid\tau\leq\mu)|K',v)$ is immediate. This completes our
induction step. Because every extension $L_\infty(\xi_{\tau}\mid
\tau\leq\mu)|L_\infty(\xi_{\tau}\mid\tau<\mu)$ is nontrivial, it
follows that the degree of $K^{1/p}|L_{\infty}$ is at least $\kappa$.

\parm
A simple modification of the above arguments allows us to show the
assertion of part b) of the theorem in the case of $\kappa=[Kv:(Kv)^p]$,
in which the set $B$ is infinite. Let us describe these modifications.

We take a partition of $B$ into $\kappa$ many countably infinite sets
$B_{\tau}$, $\tau<\kappa$, and choose enumerations
\[
B_{\tau}\>=\>\{b_{\tau,i} \mid i\in\N\} \>.
\]
For every $\mu<\kappa$ we set ${\cal B}_\mu:=\bigcup_{\tau<\mu} B_\tau$
and
\[
K_\mu\>:=\> K(b^{1/p}\mid b\in {\cal B}_\mu)\>.
\]
Similarly as before, it is shown that
\begin{equation}                            \label{vL'mu}
vK_\mu\>=\>vK \;\;\mbox{ and }\;\; K_\mu v\>=\>
Kv((bv)^{1/p})\mid b\in {\cal B}_\mu)\>.
\end{equation}

We choose a sequence $(c_i)_{i\in\N}$ of elements in $K$ with strictly
increasing values. Again, if the cofinality of $vK$ is countable, then
the elements $c_i$ can be chosen in such a way that the sequence of
their values is cofinal in $vK$. For every $\tau<\kappa$ and $n\in\N$,
we set
\begin{equation}                                   \label{xi_resf}
\xi_{\tau,n}\>:=\>\sum_{i=1}^n c_i^{ }b_{\tau,i}^{1/p}\>\in\>
K_{\tau+1}\>.
\end{equation}
Now the only further part of the proof that needs to be modified is the
one that shows that $\eta_\mu\in E$, where $\eta_{\mu}$ is a pseudo
limit of $(\xi_{\mu,n})_{n\in\N}$, leads to a contradiction. In the
present case, we take $n$ to be the minimum of all $i\in\N$ such that
$b_{\mu,i}$ is not among the $b_1,\ldots,b_\ell\,$. As before, we set
$\xi_E:=0$ if $n=1$, and $\xi_E:=\xi_{\mu,n-1}$ otherwise. Then $\eta_\mu
-\xi_{E}\in E$. In contrast, the fact that $\eta_\mu$ is a pseudo
limit, together with the second equation of (\ref{vK'}), yields that
\begin{eqnarray*}
c_n^{-1}(\eta_\mu-\xi_E)v &=& c_n^{-1}(\xi_{\mu,n}-\xi_E)v
\>=\> (b_{\mu,n}^{1/p})v \>=\>(b_{\mu,n}v)^{1/p} \\
 & \notin & K_\mu v((b_1v)^{1/p},\ldots,(b_\ell v)^{1/p})\>=\>Ev\>,
\end{eqnarray*}
a contradiction. This completes our modification and thereby the proof
that the extension $(K^{1/p}|L_\infty,v)$ is of degree at least
$\kappa$.

\parb
We now turn to part c) of the theorem. Again, we consider separately the
cases of $\kappa=(vK:pvK)$ and of $\kappa=[Kv:(Kv)^p]$.

We assume first that $\kappa=(vK:pvK)$ and take a partition of $A$ as in
the proof of part b). Further, we set $s(1)=0$ and $s(m)=1+2+\cdots+
(m-1)$ for $m>1$. For every $\tau <\mu$ and every $m\in \N$, we set
\[
z_{\tau,m}\>:=\> \sum_{i=1}^m d_{\tau,s(m)+i}
a_{\tau,s(m)+i}^{p^{-i}}\in K^{1/p^{\infty}}\>,
\]
where $d_{\tau,j}$ are elements from $K$ such that for every $m\in\N$,
\sn
1) the sequence $(vd_{\tau,s(m)+i}^{ }a_{\tau,s(m)+i}^{p^{-i}}
)_{1\leq i\leq m}$ is strictly increasing,
\sn
2) $vd_{\tau,s(m)+m}^{ }a_{\tau,s(m)+m}^{p^{-m}}<
vd_{\tau,s(m+1)+1}^{ }a_{\tau,s(m+1)+1}^{p^{-1}}\,$.
\sn
If the cofinality of $vK$ is countable, then the elements $d_{\tau,i}$
can be chosen in such a way that the sequence that results from the
above is cofinal in $vK$.

We note that $z_{\tau,m}^{p^m}\in K$ with
\begin{equation}                            \label{ztm}
[K(z_{\tau,m}):K] \>=\> p^m \;\;\mbox{ and }\;\;
\frac{1}{p}va_{\tau,s(m)+1},\ldots,\frac{1}{p}
va_{\tau,s(m)+m} \in vK(z_{\tau,m})\>.
\end{equation}
We set
\[
L_\mu\>:=\>K(z_{\tau,m} \mid\tau<\mu,\, m\in\N)
\;\mbox{ for } \mu\leq\kappa\,,\mbox{ and } \;\; L\>:=\>L_\kappa\>.
\]
Further, we fix a maximal immediate extension $(M,v)$ of $(L,v)$. We
claim that
\begin{equation}                            \label{vgrsLmu}
vL_\mu\>=\>vK + \sum_{a\in {\cal A}_\mu} \frac{1}{p}va
\;\;\mbox{ and }\;\; L_\mu v\>=\>Kv\>.
\end{equation}
In particular, this shows that
\begin{equation}                            \label{vLLv}
vL=\frac{1}{p}vK \;\;\mbox{ and }\;\; Lv\>=\>Kv\>.
\end{equation}

To prove our claim, we observe that the first inclusion ``$\supseteq$''
in (\ref{vgrsLmu}) follows from (\ref{ztm}). We choose any $\mu<\kappa$,
$k\in\N$, $\tau_1,\ldots,\tau_k<\mu$ and $m_1,\ldots, m_k\in \N$ such
that the pairs $(\tau_i,m_i)$, $1\leq i\leq k$, are distinct. Then we
compute, using (\ref{ztm}):
\begin{eqnarray*}
p^{m_1}\cdot\ldots\cdot p^{m_k} & \geq & [K(z_{\tau_1,m_1},\ldots,
z_{\tau_k,m_k}):K] \\
& \geq & (vK(z_{\tau_1,m_1},\ldots,z_{\tau_k,m_k}):vK)
[K(z_{\tau_1,m_1},\ldots,z_{\tau_k,m_k})v:Kv] \\
& \geq & (v K(z_{\tau_1,m_1},\ldots,z_{\tau_k,m_k}):vK)\\
& \geq & (vK+ \sum_{j=1}^k\sum_{i=1}^{m_j}
\frac{1}{p}va_{\tau_j,s(m_j)+i}\Z\,:vK)\>\geq\>
p^{m_1}\cdot\ldots\cdot p^{m_k}\>,
\end{eqnarray*}
showing that equality holds everywhere. Therefore,
\[
vK(z_{\tau_1,m_1},\ldots,z_{\tau_k,m_k}) \>=\> vK+
\sum_{j=1}^k\sum_{i=1}^{m_j} \frac{1}{p}va_{\tau_j,s(m_j)+i}\Z
\>\subseteq\> vK + \sum_{a\in {\cal A}_\mu} \frac{1}{p}va
\]
and
\[
K(z_{\tau_1,m_1},\ldots,z_{\tau_k,m_k})v\>=\>Kv\>.
\]
Since the value group and residue field of $L_\mu$ are the unions of the
value groups and residue fields of all subfields of the above form, this
proves our claim.

\pars
For every $\tau<\kappa$ and $n\in\N$, we set
\[
\zeta_{\tau,n}:=\sum_{m=1}^n z_{\tau,m}\in L.
\]
Then $(\zeta_{\tau,n})_{n\in\N}$ is a pseudo Cauchy sequence in $(L,v)$,
hence it admits a pseudo limit $\zeta_{\tau}$ in the maximal field
$(M,v)$. In order to show that the transcendence degree of $M|L$ is at
least $\kappa$, we prove by induction that for every $\mu<\kappa$
and every field $L'$ such that $L_{\mu+1}\subseteq L'\subseteq L$, the
pseudo Cauchy sequence $(\zeta_{\mu,n})_{n\in\N}$ is of transcendental
type over $L'(\zeta_{\tau}\mid\tau<\mu)$, so that the extension
$(L'(\zeta_{\tau}\mid\tau\leq\mu)|L'(\zeta_{\tau}\mid\tau<\mu),v)$ is
immediate and transcendental and then also the extension
\begin{equation}                            \label{extia}
(L'(\zeta_{\tau}\mid\tau\leq\mu)|L',v)
\end{equation}
is immediate.

Take $\mu<\kappa$ and assume that our assertions have already been
shown for all $\mu'<\mu$. If $\mu=\mu'+1$ is a successor ordinal, then
from (\ref{extia}) we readily get that the extension
\begin{equation}                            \label{extib}
(L'(\zeta_{\tau}\mid\tau<\mu)|L',v)
\end{equation}
is immediate for every $L'$ such that $L_\mu\subseteq L'\subseteq L$.
If $\mu$ is a limit ordinal, then (\ref{extib}) follows from the
induction hypothesis since $L_{\mu'}\subseteq L_\mu\subseteq L'$ for
each $\mu'<\mu$ and since the union over an increasing chain of
immediate extensions of $(L',v)$ is again an immediate extension of
$(L',v)$.

In order to prove the induction step, take any $L'$ such that $L_{\mu+1}
\subseteq L'\subseteq L$. Suppose towards a contradiction that the
pseudo Cauchy sequence $(\zeta_{\mu,n})_{n\in\N}$ in $L_{\mu+1}$ is of
algebraic type over $(L'(\zeta_\tau\mid\tau<\mu),v)$ (which includes the
case where it has a pseudo limit in $L'(\zeta_\tau\mid\tau<\mu)$). Then
by Theorem~\ref{KT3} there exists an immediate algebraic extension
$(L'(\zeta_\tau\mid\tau<\mu)(d)|L'(\zeta_\tau\mid \tau<\mu),v)$ with
$d$ a pseudo limit of the sequence. The element $d$ is also algebraic
over $L_\mu(\zeta_\tau\mid\tau<\mu)$.
On the other hand, we will now show that from the fact that $d$ is a
pseudo limit of $(\zeta_{\mu,n})_{n\in\N}$ it follows that the value
group $vL_\mu(\zeta_\tau\mid\tau<\mu)(d)$ is an infinite extension of
$vL_\mu(\zeta_\tau\mid\tau<\mu)$. Take $n\in \N$ and define
\[
\eta_{\mu,n}\>:=\> \zeta_{\mu,n}^{p^{n-1}} -
d_{\mu,s(n)+n}^{p^{n-1}}a_{\mu,s(n)+n}^{1/p}
\>=\>\zeta_{\mu,n-1}^{p^{n-1}}+ \sum_{i=1}^{n-1}
d_{\mu,s(n)+i}^{p^{n-1}}a_{\mu,s(n)+i}^{p^{n-1-i}}\in K\>.
\]
Since $d$ is a pseudo limit of the pseudo Cauchy sequence
$(\zeta_{\mu,n})_{n\in\N}$, we deduce that
\begin{equation}                            \label{d-z}
v (d-\zeta_{\mu,n})\>=\>v z_{\mu,{n+1}}\>=\>
vd_{\mu,s(n+1)+1}a_{\mu,s(n+1)+1}^{1/p} \> >\>
vd_{\mu,s(n)+n}a_{\mu,s(n)+n}^{p^{-n}}\>.
\end{equation}
Therefore,
\begin{eqnarray*}
v\left(d^{p^{n-1}}-\eta_{\mu,n}\right) & = &
v\left(d^{p^{n-1}}- \zeta_{\mu,n}^{p^{n-1}} +
d_{\mu,s(n)+n}^{p^{n-1}}a_{\mu,s(n)+n}^{1/p}\right)\\
& = & p^{n-1} v\left(d-\zeta_{\mu,n}+
d_{\mu,s(n)+n}a_{\mu,s(n)+n}^{p^{-n}}\right)\\
& = & p^{n-1}\min\left\{v(d-\zeta_{\mu,n})\,,\,v\left(
d_{\mu,s(n)+n}a_{\mu,s(n)+n}^{p^{-n}}\right)\right\}\\
& = & p^{n-1}v\left(d_{\mu,s(n)+n}a_{\mu,s(n)+n}^{p^{-n}}
\right) \\
&=& p^{n-1} vd_{\mu,s(n)+n}+\frac{1}{p} va_{\mu,s(n)+n}\>,
\end{eqnarray*}
which shows that
\[
\frac{1}{p} va_{\mu,s(n)+n}\>\in\>vL_\mu(\zeta_\mu\mid\mu<\tau)(d)
\]
for all $n\in\N$. In view of (\ref{vgrsLmu}), these values are not in
$vL_\mu\,$. Since the extension (\ref{extib}) is immediate for $L_\mu$
in place of $L'$, they are also not in $vL_\mu(\zeta_\tau\mid\tau
<\mu)$. It follows that the index $(vL_\mu(\zeta_\tau\mid\tau<\mu)(d):
vL_\mu(\zeta_\tau\mid\tau <\mu))$ is infinite. This contradicts the fact
that the extension $L_\mu(\zeta_\tau\mid\tau<\mu)(d)|L_\mu(\zeta_\tau\mid
\tau<\mu)$ is finite. This contradiction proves that the pseudo Cauchy
sequence $(\zeta_{\mu,n})_{n\in\N}$ is of transcendental type over
$L'(\zeta_\tau \mid\tau<\mu)$. From Theorem~\ref{KT2} it follows
that $(L'(\zeta_\tau\mid\tau\leq \mu)|L'(\zeta_\tau\mid\tau<\mu),v)$ is
an immediate transcendental extension. Since the extension (\ref{extib})
is immediate, we obtain that also $(L'(\zeta_\tau\mid\tau\leq\mu)|L',
v)$ is immediate.

This completes our induction step. By induction on $\mu$ we have
therefore shown that $(L(\zeta_{\tau}\mid\tau<\mu),v)$ is an immediate
extension of $(L,v)$ for each $\mu<\kappa$, which yields that also the
union $(L(\zeta_{\tau}\mid\tau<\kappa),v)$ of these fields is an
immediate extension of $(L,v)$. As every extension $L(\zeta_{\tau}\mid
\tau\leq\mu)|L(\zeta_{\tau}\mid\tau<\mu)$ is transcendental, the
transcendence degree of $L(\zeta_{\tau}\mid\tau<\kappa)$ over $L$ is
at least $\kappa$.

\parb
A simple modification of the above arguments allows us to show the
assertion of part c) of the theorem in the case of $\kappa=[Kv:(Kv)^p]$.
We take the partition of $B$ as in the proof of part b).
We now list the modifications.

Since the $vb=0$ for all $b\in B$, the only requirement for the elements
$d_{\tau,i}$ that we need is that $vd_{\tau,i}<vd_{\tau,j}$ for $i<j$.
If the cofinality of $vK$ is countable, then the elements $d_{\tau,i}$
can be chosen in such a way that the sequence of their values is cofinal
in $vK$. We set
\[
z_{\tau,m}\>:=\> \sum_{i=1}^m d_{\tau,s(m)+i}
b_{\tau,s(m)+i}^{p^{-i}}\in K^{1/p^{\infty}}\>,
\]
Equation (\ref{ztm}) is replaced by
\begin{equation}                            \label{ztm'}
[K(z_{\tau,m}):K] \>=\> p^m \;\;\mbox{ and }\;\;
(b_{\tau,s(m)+1}v)^{1/p},\ldots,(b_{\tau,s(m)+m}v)^{1/p}
\in K(z_{\tau,m})v\>.
\end{equation}
One proves in a similar way as before that
\begin{equation}                            \label{vgrsLmu'}
vL_\mu\>=\>vK \;\;\mbox{ and }\;\; L_\mu v\>=\>Kv((bv)^{1/p}\mid b\in
{\cal B}_\mu)\>.
\end{equation}
In particular, this shows that
\begin{equation}                            \label{vLLv'}
vL=vK \;\;\mbox{ and }\;\; Lv\>=\>(Kv)^{1/p}\>.
\end{equation}
Now the only further part of the proof that needs to be modified is the
one that shows that the extension
$L_\mu(\zeta_\tau\mid\tau<\mu)(d)|L_\mu(\zeta_\tau\mid\tau <\mu)$
cannot be finite. We define $\eta_{\mu,n}$ as before, with ``$b$'' in
place of ``$a$''. Also (\ref{d-z}) holds with ``$b$'' in place of
``$a$'', whence
\[
v\,d_{\mu,s(n)+n}^{-p^{n-1}}\left(d^{p^{n-1}}-
\zeta_{\mu,n}^{p^{n-1}}\right) \>=\> p^{n-1}
vd_{\mu,s(n)+n}^{-1}\left(d-\zeta_{\mu,n}\right)\> >\>0\>.
\]
This leads to
\begin{eqnarray*}
d_{\mu,s(n)+n}^{-p^{n-1}}\left(d^{p^{n-1}}-\eta_{\mu,n}\right)v
& = & d_{\mu,s(n)+n}^{-p^{n-1}}\left(d^{p^{n-1}}-
\zeta_{\mu,n}^{p^{n-1}}+d_{\mu,s(n)+n}^{p^{n-1}}
b_{\mu,s(n)+n}^{1/p}\right)v\\
& = & \left(d_{\mu,s(n)+n}^{-p^{n-1}}(d^{p^{n-1}}-
\zeta_{\mu,n}^{p^{n-1}})+ b_{\mu,s(n)+n}^{1/p}\right)v\\
&=&(b_{\mu,s(n)+n}^{1/p})v\>=\>(b_{\mu,s(n)+n}v)^{1/p}\>,
\end{eqnarray*}
which shows that
\[
(b_{\mu,s(n)+n}v)^{1/p}\>\in\>L_\mu(\zeta_\tau\mid\tau<\mu)(d)v
\]
for all $n\in\N$. In view of (\ref{vgrsLmu'}), these residues are not in
$L_\mu v$. As before, this is shown to contradict $d$ being algebraic
over $L_\mu(\zeta_\tau\mid\tau<\mu)$. This completes our modification
and thereby the proof that $(M|L,v)$ is of transcendence degree at least
$\kappa$.                                             \QED

\parb
We now come to the proof of
\sn
{\bf Proof of Theorem~\ref{distinct_max}}:
\sn
Note that a field $(K,v)$ which satisfies the assumptions of
Theorem~\ref{distinct_max} also satisfies the assumptions of
Theorem~\ref{infpdeg}. We choose the sets $A,B\subseteq K^{1/p}$ and
define $L:=L_{\infty}$ as in the proof of part b) of
Theorem~\ref{infpdeg}. Then, as we have already seen, $(K^{1/p},v)$ is a
maximal immediate extension of $(L,v)$.

\pars
To show the existence of an immediate extension of $L$ of infinite
transcendence degree over $L$, we consider separately the cases i) and
ii) of the theorem. We assume first that the conditions of case i) hold.
Then the set $A$ can be chosen so as to contain an infinite countable
subset $A'$ such that the set of values $S=\{va\mid a\in A'\}$ is
bounded. It must contain a bounded infinite strictly increasing or a
bounded infinite strictly decreasing sequence. If it does not contain
the former, we replace $A'$ by $\{a^{-1}\mid a\in A'\}$, thereby passing
from $S$ to $-S$. Note that in our proof we will not need that
$A'\subseteq A$; we will only use that $A' \subseteq L$. Now we can
choose a sequence $(a_j)_{j\in\N}$ of elements in $A'$ such that the
sequence $(va_j)_{j\in\N}$ is strictly increasing and bounded
by some $\gamma\in vK$. We partition the sequence $(a_j)_{j\in\N}$ into
countably many subsequences
\[
(a_{N,i})_{i\in\N}\qquad (N\in\N)\;.
\]
As in the proof of Theorem~\ref{infpdeg}, we define $K_N:=K(a_{n,i}\mid
n<N\,,\,i\in\N)\subseteq K^{1/p}$.

For every $N\in \N$ we consider the pseudo Cauchy sequence
$(\xi_{N,m})_{m\in\N}$ defined by
\[
\xi_{N,m}\>:=\>\sum_{i=1}^m a_{N,i}^{1/p}\>\in\>K_{N+1}\>.
\]
and the pseudo limits $\xi_N$ of the sequences in the maximal immediate
extension $(K^{1/p}, v)$ of $(L,v)$. We show that for every $N$ the
pseudo limit $\xi_N$ does not lie in the completion $L^c$ of $(L,v)$.
Fix $N\in\N$ and take any $d\in L$. Then $d$ lies already in some finite
extension
\[
E\>:=\>K(a_1^{1/p},\ldots,a_k^{1/p},b_1^{1/p},\ldots,b_l^{1/p})
\]
of $K$ in $L$. Choosing $\xi_E$ as in the proof of
Theorem~\ref{infpdeg}, we obtain that $\xi_E-d\in E$. But from
equalities (\ref{vK'}) with $\mu =0$ it follows that
$v(\xi_N-\xi_E)=\frac{1}{p}va_{N,n}\notin vE$. Thus,
\[
v(\xi_N-d)\>=\> \min\{v(\xi_N-\xi_E),v(\xi_E-d)\}\leq
\frac{1}{p}\,va_{N,n} <\frac{1}{p}\,\gamma\>.
\]
Hence the values $v(\xi_N-d)$, $d\in L$, are bounded by
$\frac{1}{p}\gamma$ and consequently, $\xi_N\notin L^c$.

Again from the proof of Theorem~\ref{infpdeg} it follows that for
every field $K'$ such that $K_1\subseteq K'\subseteq L$ the extension
$(K'(\xi_1)|K',v)$ is immediate and purely inseparable of degree $p$.
Since $\xi_1\notin L^c$, from Proposition~\ref{deformthm} we deduce that
for an element $d_1\in K^{\times}$ satisfying inequality (\ref{dist})
with $\eta=\xi_1$, a root $\vartheta_1$ of the polynomial
\[
f_1\>:=\>X^p-X-\left(\frac{\xi_1}{d_1}\right)^p
\]
generates an immediate Galois extension $(L(\vartheta _1)|L,v)$ of
degree $p$ with a unique extension of the valuation $v$ from $L$ to
$L(\vartheta _1)$. Take any field $K'$ such that $K_1\subseteq
K'\subseteq L$. Then $\xi_1\notin K'^c$ and the element $d_1$ satisfies
inequality (\ref{dist}) with every element $c\in K'$. Therefore also
$(K'(\vartheta_1)|K',v)$ is an immediate extension of degree $p$ with
a unique extension of $v$ from $K'$ to $K'(\vartheta_1)$.

Take any $m>1$. Suppose that we have shown that for every $l< m$ there is
$d_l\in K ^{\times}$ such that a root $\vartheta_l$ of the polynomial
\[
f_l\>:=\>X^p-X-\left(\frac{\xi_l}{d_l}\right)^p
\]
generates, for any field $K'$ with $K_{l+1}\subseteq K'\subseteq L$, an
immediate Galois extension $(K'(\vartheta_1,\ldots, \vartheta_l)|
K'(\vartheta_1,\ldots,\vartheta_{l-1}),v)$ of degree $p$ with a unique
extension of the valuation $v$ from $K'(\vartheta_1, \ldots,
\vartheta_{l-1})$ to $K'(\vartheta_1, \ldots,\vartheta_{l})$. Then in
particular, the extension $(K_{m+1}(\vartheta_1,\ldots,\vartheta_{m-1})|
K_{m+1},v)$ is immediate. Take any field $K'$ such that $K_{m+1}
\subseteq K'\subseteq L$. Replacing in the argumentation of the proof of
part b) of Theorem~\ref{infpdeg} the field $K'(\xi_l\mid l< m)$ by
$K'(\vartheta_l\mid l< m)$, we deduce that $(K'(\vartheta _1,\ldots,
\vartheta_{m-1})(\xi_m)| K'(\vartheta _1,\ldots,\vartheta_{m-1}),v)$ is
an immediate purely inseparable extension of degree $p$. Since
$\xi_m\notin L^c$ and $L(\vartheta _1,\ldots,\vartheta_{m-1})^c
=L^c(\vartheta_1,\ldots, \vartheta_{m-1})$ is a separable extension of
$L$, linearly disjoint from the purely inseparable extension
$L^c(\xi_m)|L^c$, we obtain that
\[
[L^c(\vartheta _1,\ldots,\vartheta_{m-1})(\xi_m):
L^c(\vartheta _1,\ldots,\vartheta_{m-1})]\>=\> p.
\]
Therefore, $\xi_m$ does not lie in $L(\vartheta _1,\ldots,
\vartheta_{m-1})^c$. Thus, from Proposition~\ref{deformthm} it follows
that for an element $d_m\in K^{\times}$ satisfying inequality
(\ref{dist}) with $\eta=\xi_m$, a root $\vartheta_m$ of the polynomial
$f_m:=X^p-X-\left(\xi_m/d_m\right)^p$
generates an immediate Galois extension $(L(\vartheta_1,\ldots,
\vartheta_m)| L(\vartheta _1,\ldots,\vartheta_{m-1}),v)$ of degree $p$
with a unique extension of the valuation $v$ from $L(\vartheta_1,\ldots,
\vartheta_{m-1})$ to $L(\vartheta_1,\ldots,\vartheta_m)$. As in the
case of $m=1$ we deduce that also the extension $(K'(\vartheta_1,
\ldots,\vartheta_{m})| K'(\vartheta_1,\ldots, \vartheta_{m-1}),v)$ is
immediate and the valuation $v$ of $K'(\vartheta _1,\ldots,
\vartheta_{m-1})$ extends uniquely to $K'(\vartheta _1,\ldots,
\vartheta_{m})$.

By induction, we obtain an infinite immediate separable-algebraic
extension $F:=L(\vartheta_{m}\mid m\in\N)$ of $L$ with the unique
extension of the valuation $v$ of $L$ to $F$. Thus, the extension $F|L$
is linearly disjoint from $L^h|L$. From the separable algebraic case of
Theorem~\ref{MTai} it follows that each maximal immediate extension
$(M,v)$ of $(F,v)$ has infinite transcendence degree over $F$. Since
$(F|L,v)$ is immediate, $M$ is also a maximal immediate extension of
$L$.

\parm
Similar arguments allow us to prove the assertion in the case of an
infinite residue field extension $Kv|(Kv)^p$ when the value group $vK$
is not discrete. Let us describe the modifications.

Take an infinite countable subset $B'$ of $B$ and an infinite partition
of $B$ into infinite sets
\[
B_N\>=\>\{ b_{N,i}\mid i\in\N\}\qquad (N\in\N)\;.
\]
Since $vK$ is not discrete, we can choose elements $c_i\in K$ such that
the sequence $(vc_i)_{i\in\N}$ of their values is strictly increasing
and bounded by some element $\gamma\in vK$. For every $N$ we consider
the pseudo Cauchy sequence $(\xi_{N,m})_{m\in\N}$ defined by
(\ref{xi_resf}).

The only further part of the proof that needs to be modified is the one
that shows that $\xi_N\notin L^c$. More precisely, we need to show that
for any element $d\in E$ we have that $v(\xi_N-d)<\gamma$.
Take $\xi_E$ as in the second case of the proof of part b) of
Theorem~\ref{infpdeg}. From the equalities (\ref{vK'}) with $\mu=0$ we
deduce that $c^{-1}_n (\xi_N-\xi_E)v=(b_{N,n}v)^{1/p}\notin Ev$.
Suppose that $v(\xi_N-d)> v(\xi_n-\xi_E)$. Then
\[
v\left(c_n^{-1}(\xi_N-\xi_E) - c_n^{-1}(\xi_E-d)\right)\> >\>
v c^{-1}_n(\xi_N-\xi_E)\>=\>0.
\]
It follows that $c_n^{-1}(\xi_N-\xi_E)v=c_n^{-1}(\xi_E-d)v\in Ev$,
a contradiction. Consequently,
\[
v(\xi_N-d)\> \leq \> v(\xi_N-\xi_E)\> =\> vc_n\> < \> \gamma\>.
\]
This completes our modification and thereby the proof that $(L,v)$ admits
a maximal immediate extension of infinite transcendence degree over $L$.
\QED

\parb
Finally, we give the
\sn
{\bf Proof of Theorem~\ref{maxtrext}}:
\sn
Take an extension $(L|K,v)$ as in the assumptions of the theorem.
In view of the value-algebraic and residue-algebraic cases of
Theorem~\ref{MTai}, it suffices to show that at least one of the
extensions $vL|vK$ or $Lv|Kv$ is infinite.

Take $K'$ to be the relative algebraic closure of $K$ in $L^h$.
By the assumptions on the residue field and value group extensions of
$(L|K,v)$, it follows from Lemma~\ref{vrrac} that $vK'=vL^h=vL$ and
$K'v=L^hv=Lv$. Therefore, $(L^h|K',v)$ is an immediate transcendental
extension.

Suppose that the value group extension and the residue field extension
of $(L|K,v)$ and hence of $(K'|K,v)$ were finite. But since $K$ is
henselian and a defectless field by Theorem~\ref{mfhdl}, the degree
$[K':K]$ is equal to $(vK':vK)[K'v:Kv]$ and hence would be finite, so
$(K',v)$ would again be a maximal field, which contradicts the fact that
$(L^h|K',v)$ is a nontrivial immediate extension.                \QED

\bn

\newcommand{\lit}[1]{\bibitem{#1}}

\fvkadresse
\abaddress
\end{document}